%
%
%
%
%
\RequirePackage{fix-cm}
\documentclass[smallextended]{svjour3}       
\smartqed  
\renewcommand{\smartqed}{\tag*{$\square$}}

\usepackage{graphicx}
%
%
\usepackage{diagbox}
\usepackage{multirow}
\usepackage{multicol}
\usepackage{subfiles}
\usepackage{amsmath, amssymb, mathtools} 
\usepackage{stmaryrd, accents}
\usepackage{xcolor}

\definecolor{ao(english)}{rgb}{0.0, 0.5, 0.0}

\usepackage{hyperref}
\hypersetup{
    colorlinks=true,
    linkcolor=blue,
    filecolor=magenta,
    urlcolor= cyan,
    citecolor=ao(english)
}
\urlstyle{same}
\usepackage[inline]{enumitem}
\usepackage{tabularx}
\usepackage{empheq} 
\usepackage{tikz}
\usetikzlibrary{automata, positioning}

\usepackage[caption=false]{subfig}

\usepackage[toc,page]{appendix}

\usepackage{pifont}
\usepackage[acronym]{glossaries}

\allowdisplaybreaks


%

\spdefaulttheorem{counterex}{Counter-example}{\it}{\rm}
\spdefaulttheorem{assumption}{Assumption}{\bf}{\rm}

\newcommand{\rt}[1]{\textcolor{red}{#1}}


\makeatletter
\newcommand{\setword}[2]{%
  \phantomsection
  #1\def\@currentlabel{\unexpanded{#1}}\label{#2}%
}
\makeatother

\newcommand{\prox}{\mathrm{Prox}}
\newcommand{\R}{\mathbb{R}}
\newcommand{\primal}{\mathcal{P}}
\newcommand{\spp}{\mathcal{SPP}}
\newcommand{\X}{\mathcal{X}}
\newcommand{\Y}{\mathcal{Y}}

\newcommand{\Z}{\mathcal{Z}}
\newcommand{\C}{\mathcal{C}}
\newcommand{\K}{\mathcal{K}}
\newcommand{\W}{\mathcal{W}}
\newcommand{\D}{\mathcal{D}}
\newcommand{\F}{\mathcal{F}}
\newcommand{\U}{\mathcal{U}}
\newcommand{\Op}{\mathcal{O}}
\newcommand{\Lag}{\mathcal{L}}
\newcommand{\Q}{\mathcal{Q}}
\newcommand{\Qb}{\mathbf{Q}}
\newcommand{\0}{\mathbf{0}}
\newcommand{\cb}{\mathbf{c}}
\newcommand{\nor}{\mathcal{N}}
\newcommand{\dom}{\mathrm{dom}}
\newcommand{\dist}{\mathrm{dist}}
\newcommand{\proj}{\mathrm{Proj}}

\newcommand{\Id}{\mathrm{Id}}
\newcommand{\Ran}{\mathrm{Ran}}
\newcommand{\G}{\mathcal{G}_{\beta}}

\newcommand{\sgn}{\mathrm{sgn}}

\newignoredglossary{ignored}

\newacronym{cs}{CSi}{Cauchy–Schwarz inequality} 
\newacronym{yi}{Yi}{Young's inequality} 
\newacronym[type=ignored]{kkt}{\textcolor{red}{KKT error}}{\textcolor{red}{Karush–Kuhn–Tucker error}}
\newacronym[type=ignored]{dg}{\textcolor{red}{DG}}{\textcolor{red}{Duality Gap}}
\newacronym[type=ignored]{OG}{\textcolor{red}{OG}}{\textcolor{red}{Optimality Gap}}
\newacronym[type=ignored]{og}{\textcolor{red}{OG}}{\textcolor{red}{optimality gap}}
\newacronym[type=ignored]{FE}{\textcolor{red}{FE}}{\textcolor{red}{Feasibility Error}}
\newacronym[type=ignored]{fe}{\textcolor{red}{FE}}{\textcolor{red}{feasibility error}}
\newacronym[type=ignored]{OGFE}{\textcolor{red}{OGFE}}{\textcolor{red}{Optimality Gap and Feasibility error}}
\newacronym[type=ignored]{ogfe}{\textcolor{red}{OGFE}}{\textcolor{red}{optimality gap and feasibility error}}
\newacronym[type=ignored]{SDG}{\textcolor{red}{SDG}}{\textcolor{red}{Smoothed Duality Gap}}
\newacronym[type=ignored]{sdg}{\textcolor{red}{SDG}}{\textcolor{red}{smoothed duality gap}}
\newacronym[type=ignored]{PDG}{\textcolor{red}{PDG}}{\textcolor{red}{Projected Duality Gap}}
\newacronym[type=ignored]{pdg}{\textcolor{red}{PDG}}{\textcolor{red}{projected duality gap}}
\newacronym[type=ignored]{doss}{\textcolor{red}{DOSS}}{\textcolor{red}{Distance to the Optimal Solution Set}}
\newacronym[type=ignored]{ids}{\textcolor{red}{IDS}}{\textcolor{red}{Infimal Sub-differential Size}}
\newacronym[type=ignored]{MSR}{\textcolor{red}{MSR}}{\textcolor{red}{Metric Sub-Regularity}}
\newacronym[type=ignored]{msr}{\textcolor{red}{MSR}}{\textcolor{red}{metric sub-regularity}}
\newacronym[type=ignored]{QEB}{\textcolor{red}{QEB}}{\textcolor{red}{Quadratic Error Bound}}
\newacronym[type=ignored]{qeb}{\textcolor{red}{QEB}}{\textcolor{red}{quadratic error bound}}
\newacronym[type=ignored]{qebsg}{\textcolor{red}{QEBSG}}{\textcolor{red}{quadratic error bound of the smoothed gap}}
\newacronym[type=ignored]{QEBSG}{\textcolor{red}{QEBSG}}{\textcolor{red}{Quadratic Error Bound of the Smoothed Gap}}
\newacronym[type=ignored]{msrsdl}{\textcolor{red}{MSRSDL}}{\textcolor{red}{metric sub-regularity of the sub-differential of the Lagrangian}}
\newacronym[type=ignored]{MSRSDL}{\textcolor{red}{MSRSDL}}{\textcolor{red}{Metric Sub-Regularity of the Sub-Differential of the Lagrangian}}

%
%
\begin{document}

\title{The Smoothed Duality Gap as a Stopping Criterion 
}


\author{Iyad Walwil         \and
        Olivier Fercoq 
}


\institute{Iyad Walwil \at
              LTCI, T\'el\'ecom Paris, Institut Polytechnique de Paris, France \\ Department of Mathematics, Faculty of Sciences, An-Najah National University, Nablus, Palestine \\
              \email{iyad.walwil@telecom-paris.fr}           
           \and
           Olivier Fercoq \at
            LTCI, T\'el\'ecom Paris, Institut Polytechnique de Paris, France \\
            \email{olivier.fercoq@telecom-paris.fr}
}

\date{Received: date / Accepted: date}

\maketitle




\begin{abstract}
We optimize the running time of the primal-dual algorithms by optimizing their stopping criteria for solving convex optimization problems under affine equality constraints, which means terminating the algorithm earlier with fewer iterations. We study the relations between four stopping criteria and show under which conditions they are accurate to detect optimal solutions. The uncomputable one: \textit{"Optimality gap and Feasibility error"}, and the computable ones: the \textit{"Karush-Kuhn-Tucker error"}, the \textit{"Projected Duality Gap"}, and the \textit{"Smoothed Duality Gap"}. Assuming metric sub-regularity or quadratic error bound, we establish that all of the computable criteria provide practical upper bounds for the optimality gap, and approximate it effectively. Furthermore, we establish comparability between some of the computable criteria under certain conditions. Numerical experiments on basis pursuit, and quadratic programs with(out) non-negative weights corroborate these findings and show the superior stability of the smoothed duality gap over the rest.
\keywords{Convex optimization \and Stopping criteria \and Optimality gap and Feasibility error \and Karush-Kuhn-Tucker\and Projected Duality Gap \and Smoothed Duality Gap}
\subclass{65K05 \and 90C25 \and 90C46}
\end{abstract}


\section{Introduction} \label{sec:intro}

How do we decide when to stop doing something when we don’t really know when to stop? This is a non-trivial question in general that we can face in several events during our life. For instance, as researchers, deciding when to stop conducting more experiments to practically validate a new theoretical result is one of the hardest decisions to make. In this paper, we would like to answer a quite similar question: How do we decide when to stop an iterative algorithm seeking an $\varepsilon-$solution for an optimization problem? 

When solving an optimization problem with an iterative method, one is looking for an $\varepsilon-$solution (i.e. a solution that's $\varepsilon$ close to the optimal one). However, testing if a point is actually an $\varepsilon-$solution is not always easy, so we do not really know when to stop the algorithm. This can be detrimental to the running time of the method because of unnecessary iterations. In this work, we aim to address this problem by studying several stopping criteria and determine under which conditions they are accurate to detect $\varepsilon-$solutions.

\begin{figure}[htbp]
\subfloat[Fixed number of iterations, $\varepsilon_1 \approx 10^{-28}$]{\includegraphics[width=0.5\linewidth]{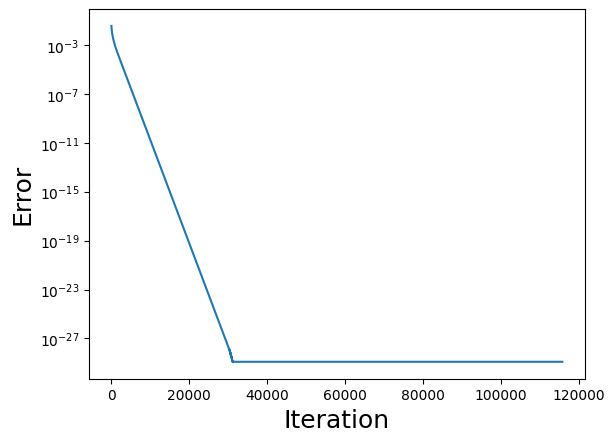}\label{subfig:mot:FNoI}}
\subfloat[Karush-Kuhn-Tucker error, $\varepsilon_2 \approx 10^{-6}$]{\includegraphics[width=0.4825\linewidth]{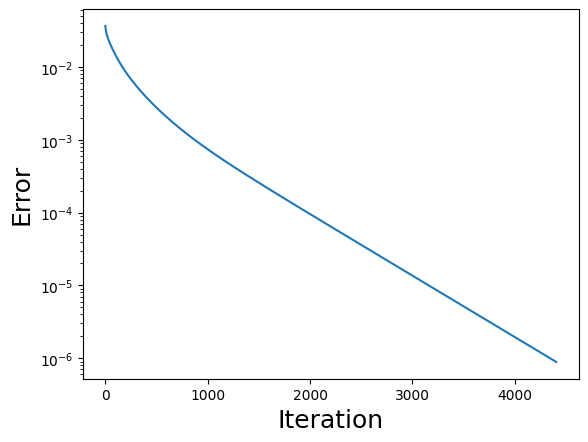}\label{subfig:mot:KKT}}
\caption{Gradient descent is employed to address an unconstrained Least-Squares problem, utilizing two distinct stopping criteria aimed at achieving an $\varepsilon = 10^{-5}$ solution}
\label{fig:MOT}
\end{figure}

Figure \ref{fig:MOT} illustrates the significance of our study by demonstrating a substantial disparity observed when solving an unconstrained Least-Squares problem with the same algorithm but employing two distinct stopping criteria to achieve an $\varepsilon = 10^{-5}$ solution. At first glance, one might infer that sub-figure \ref{subfig:mot:FNoI} outperforms sub-figure \ref{subfig:mot:KKT}. Indeed, this inference holds when solely considering the $y-$axis, where a smaller error indicates closer proximity to optimality. However, the $x-$axis conveys an alternate narrative regarding the requisite number of iterations and thereby the computational run-time. While our objective is to detect an $\varepsilon = 10^{-5}$ solution, sub-figure \ref{subfig:mot:FNoI} has gone too far for, approximately, a $10^{-28}$ solution but at the expense of time. Although sub-figure \ref{subfig:mot:FNoI} yields almost an optimal solution, it might become unfeasible to track in other scenarios involving high-dimensional problems for example.

We are interested in convex optimization problems under affine equality constraints. That is:
\begin{equation} \tag{$\primal$} \label{Primal problem}
    \min_{x \in \X} ~ f(x) \hspace{1cm} \text{subject to} \hspace{1cm} Ax = b
\end{equation}
where $f \colon \X \rightarrow \R \cup \{+\infty\}$ is a proper, lower semi-continuous, and convex function with a computable proximal operator, $A \colon \X \rightarrow \Y$ is a linear operator, and $b \in \Y$. We will, simultaneously, solve the primal and dual problems by solving their associated saddle point problem: 
\begin{equation} \tag{$\spp$} \label{Saddle pt. prob.}
    \min_{x \in \X} \max_{y \in \Y} ~ \Lag(x, y) := f(x) + \left\langle Ax - b, y \right\rangle 
\end{equation}
where $\Lag(x, y)$ is the associated Lagrangian with (\ref{Primal problem}) and $y \in \Y$ is the so-called Lagrange multiplier or dual variable. Throughout the paper, we assume the existence of a solution to (\ref{Saddle pt. prob.}). 

Problems of form (\ref{Saddle pt. prob.}) are ubiquitous in operational research, signal processing, shape optimization, statistical learning, etc... For example,
\begin{itemize}
    \item Method of framers \cite{daubechies1992ten} and Basis Pursuit \cite{Chen1994BasisP} are techniques for decomposing a signal into an “optimal” superposition of dictionary elements by picking a solution whose coefficients have minimum $\ell_2$, and $\ell_1$ norms, respectively.
    \item Statistical learning covers wide range of (un)constrained optimization problems like LASSO \cite{LASSO,LASSO1}, constrained least-squares \cite{LCLS1,LCLS}, etc...
    \item In operational research: linear programs \cite{LP,Taha1998} stand out as the most renowned problems within a broader framework that encompasses the presence of inequality constraints, $\displaystyle \min_{x \geq 0} \max_y \langle c, x \rangle + \left\langle Ax - b, y \right \rangle$

\end{itemize}

Several measures of optimality have been considered in the literature. The first and most natural one is \textit{"\acrfull{OGFE}"}, which directly fits the definition of the optimization problem at stake. Indeed, the \textit{\acrlong{OG}} represents the difference between the objective function value at the current solution $(x_k)$ and the optimal solution $(x^{\star})$, while the \textit{\acrlong{FE}} measures the constraints violation. The second traditional one is the \textit{"\acrfull{doss}"} that measures how far or close the solution is to be an optimal one. However, as both of those measures depend on the unknown point: $x^{\star}$, one cannot compute them before the problem is actually solved! Hence in algorithms, the \textit{"\acrlong{kkt}"} \cite{convex_analysis,convex_opt} is widely used \cite{AKKT,AKKT_proximity,KKT_Knitro}, it is a computable quantity and serves as a first-order optimality measure for achieving optimality in non-linear programming problems. It accomplishes this by quantifying the error in feasibility and identifying, within the sub-differential of the associated Lagrangian, the element with the smallest norm. Moreover, if the Lagrangian’s gradient is metrically sub-regular \cite{variational_analysis}, then a small \acrshort{kkt} implies that the current point is close to the set of saddle points. When the primal and dual domains are bounded, the difference between the primal and dual optimal values that define the so-called \textit{"\acrfull{dg}"} \cite{convex_opt} is a good way to measure optimality: it is often easily computable, and it is an upper bound to the \textit{\acrlong{og}}. However, for unbounded domains, it's no longer informative, as it will consistently be infinity, except for the final iterations when the algorithm begins identifying feasible solutions. A first generalization to unbounded domains has been proposed in \cite{SDG_intro}: the \textit{"\acrfull{SDG}"}, a new measure of optimality that is widely applicable but less well-studied than the other ones. It's based on the smoothing of non-smooth functions \cite{Nesterov2005SmoothMO}, and takes finite values for constrained problems, unlike the duality gap. Moreover, if the smoothness parameter is small and the smoothed duality gap is small, this means that the \textit{\acrlong{og}}, and the \textit{\acrlong{fe}} are both small. A second one has been proposed for linear programs in \cite{PDG}, we have extended its definition within our framework (\ref{Saddle pt. prob.}) and termed it the "\acrfull{PDG}." This concept involves calculating the duality gap at each iteration while simultaneously projecting the primal-dual solution onto their respective feasibility spaces. Another recent measure has been proposed for analyzing the primal-dual hybrid gradient algorithm in \cite{lu2023infimal}, the authors dub it: the \textit{"\acrfull{ids}"}. It always has a finite value, easy to compute, and more importantly, it monotonically decays. \acrshort{ids} essentially measures the distance between 0 and the sub-differential of the objective function. Throughout the remainder of this paper, our attention will be directed towards four optimality measures: the \acrlong{ogfe}, the \acrlong{kkt}, the \acrlong{pdg}, and the \acrlong{sdg}. On the one hand, we aim to demonstrate the conditions under which the computable measures (\acrshort{kkt}, \acrshort{pdg}, and \acrshort{sdg}) serve as upper bounds or approximations for the uncomputable measure, \acrlong{ogfe}. On the other hand, our objective is to assess and compare the performance of \acrlong{sdg} against both the \acrlong{kkt} and the \acrlong{pdg} to determine its efficacy and stability.

\subsection{Our contributions and Paper organization}
 The paper's contributions can be outlined as follows:
\begin{enumerate}
    \item In section \ref{sec:background}, we start by providing a comprehensive background. 
    \item Section \ref{sec:optimality measures} is dedicated to establishing a common understanding by precisely defining the different optimality measures, accompanied by a detailed analysis when necessary. Moreover, this section introduces our generalization of the existing measure: the \rt{projected duality gap}, originally defined only for linear programs, to our framework (\ref{Saddle pt. prob.}).
    \item Section \ref{sec:SDG} is allocated to present and deeply study the novel measure, the \acrlong{sdg}, while also deriving some new properties.
    \item Section \ref{sec:OG bounds} is devoted to the establishment of computable approximations for the uncomputable measure, the \acrlong{og}, in terms of the other computable measures (the \acrshort{kkt} error, \acrshort{pdg}, and \acrshort{sdg}). The necessary regularity assumptions for this purpose are also introduced.
    \item A more in-depth examination of the \acrlong{sdg} and its interconnections with both: the \acrshort{kkt} and \acrshort{pdg} is presented in section \ref{sec:comparability bounds}. More precisely, we elucidate the conditions under which these measures can function as approximations to the \acrlong{sdg}, and vice versa.
    \item Section \ref{sec:numerical exps} presents several numerical experiments illustrating our findings, while the technical proofs are relegated to the appendix. 
\end{enumerate}

\subsection{Notations}

We shall denote $\X$ the primal space, $\Y$ the dual space, and $\Z = \X \times \Y$ the primal-dual space. We assume that those vector spaces are Euclidean spaces. Similarly, for a primal vector $x$, and a dual vector $y$, we shall denote $z = (x, y)$. The set of saddle points will be denoted as $\Z^{\star}$. Let $\Gamma_0(\X)$ denote the set of all proper, lower semi-continuous, and convex functions $f \colon \X \rightarrow \R\cup\{+\infty\}$. The proximal operator of a function $f$ and a step size $s > 0$ is given by: 
\begin{equation*}
    \prox_{sf}(x) = \arg\min_{x' \in \X} f(x') + \frac{1}{2s} \|x' - x\|^2
\end{equation*}
Let $\displaystyle \dist(z, \Z) = \min_{z' \in \Z} \|z - z'\|$ denote the distance between point $z$ and set $\Z$. We will make use of the convex indicator function associated with the convex subset $\mathcal{C} \subset \mathcal{X}$: 
\begin{equation*}
    \imath_{\mathcal{C}}(x) = \begin{cases} 0 & \text{if} ~ x \in \mathcal{C} \\ + \infty & \text{Otherwise} \end{cases}
\end{equation*}

\section{Background} \label{sec:background}

\begin{proposition}[Young's inequality]  \label{props:YI} 

 For all vectors $\mathbf{u}$ and $\mathbf{v}$ of an inner product space, and for any scalar $\lambda$. The following inequality holds: 
\begin{equation} \label{eqn:YI}
    |\langle \mathbf{u}, \mathbf{v} \rangle| \leq \frac{\lambda^2}{2} \|\mathbf{u}\|^2 + \frac{1}{2\lambda^2} \|\mathbf{v}\|^2
\end{equation}
\end{proposition}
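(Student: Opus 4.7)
The plan is to prove this by the standard square-expansion argument. Since the right-hand side already decomposes as a sum of two squares with reciprocal weights, I would start from the trivially nonnegative quantity $\bigl\|\lambda \mathbf{u} \pm \tfrac{1}{\lambda}\mathbf{v}\bigr\|^2 \geq 0$, valid for any $\lambda \neq 0$ (the stated inequality is meaningless at $\lambda = 0$ unless $\mathbf{v} = 0$, so this restriction is implicit in the hypothesis).

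Expanding the squared norm via bilinearity and symmetry of the inner product yields $\lambda^2 \|\mathbf{u}\|^2 \pm 2 \langle \mathbf{u}, \mathbf{v}\rangle + \tfrac{1}{\lambda^2}\|\mathbf{v}\|^2 \geq 0$. Choosing the minus sign and rearranging gives the upper bound $\langle \mathbf{u}, \mathbf{v}\rangle \leq \tfrac{\lambda^2}{2}\|\mathbf{u}\|^2 + \tfrac{1}{2\lambda^2}\|\mathbf{v}\|^2$, while the plus sign provides the matching lower bound $-\langle \mathbf{u}, \mathbf{v}\rangle \leq \tfrac{\lambda^2}{2}\|\mathbf{u}\|^2 + \tfrac{1}{2\lambda^2}\|\mathbf{v}\|^2$. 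Combining the two delivers the absolute-value inequality in one line.

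An equally short alternative would be to chain the Cauchy--Schwarz inequality $|\langle \mathbf{u}, \mathbf{v}\rangle| \leq \|\mathbf{u}\|\,\|\mathbf{v}\|$ with the scalar AM--GM inequality $ab \leq \tfrac{1}{2}(a^2 + b^2)$ applied to $a = \lambda \|\mathbf{u}\|$ and $b = \tfrac{1}{\lambda}\|\mathbf{v}\|$; this repackaging makes the role of $\lambda$ transparent as a parameter that balances the two norm terms. I would prefer the first route because it is fully self-contained and does not depend on another named inequality, whereas the second implicitly invokes \gls{cs}.

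There is essentially no obstacle: the argument requires only bilinearity, symmetry, and positivity of the inner product, so it holds verbatim in any real inner-product space including the Euclidean primal--dual space $\mathcal{Z}$ used throughout the paper. The only drafting care needed is to note that equality is attained precisely when $\lambda^2 \mathbf{u} = \pm \mathbf{v}$, a fact worth recording because later proofs will typically optimize $\lambda$ to tune the split between the $\|\mathbf{u}\|^2$ and $\|\mathbf{v}\|^2$ contributions.
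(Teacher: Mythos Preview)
Your argument is correct and entirely standard: expanding $\bigl\|\lambda\mathbf{u}\pm\tfrac{1}{\lambda}\mathbf{v}\bigr\|^2\ge 0$ gives both the upper and lower bounds on $\langle\mathbf{u},\mathbf{v}\rangle$, and the alternative via Cauchy--Schwarz plus AM--GM is equally valid. Note, however, that the paper states this proposition as a background fact without proof, so there is no argument in the paper to compare against; your write-up would serve perfectly well as the omitted justification.
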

\begin{lemma} \label{lem:charac for str-cvx}
    A function $f \colon \X \rightarrow  (-\infty, +\infty]$ is $\mu-$strongly convex if for any $x, y \in \dom f$ and for any $q \in \partial f(x)$, the following inequality holds: 
    \begin{equation*} \label{eqn:charac for str-cvx}
        f(y) \geq f(x) + \langle q, y - x \rangle + \frac{\mu}{2} \|y - x\|^2 
    \end{equation*}
\end{lemma}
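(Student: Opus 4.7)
The plan is to reduce the claimed strong convexity of $f$ to ordinary convexity of the shifted function $g(x) := f(x) - \frac{\mu}{2}\|x\|^2$, using the standard equivalence: $f$ is $\mu$-strongly convex if and only if $g$ is convex. Since $g$ is convex precisely when it satisfies the subgradient inequality at every point of its subdifferentiability, the whole proof boils down to translating the assumed inequality on $f$ into the ordinary subgradient inequality on $g$.

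First I would use the elementary subdifferential calculus rule $\partial g(x) = \partial f(x) - \mu x$: picking $p \in \partial g(x)$ and setting $q := p + \mu x$, I get $q \in \partial f(x)$, so the hypothesis applied at $(x,y)$ yields
$$f(y) \;\geq\; f(x) + \langle p + \mu x,\, y-x\rangle + \frac{\mu}{2}\|y-x\|^2 .$$
Next I would combine the cross-term with the quadratic term via the identity $\mu\langle x, y-x\rangle + \tfrac{\mu}{2}\|y-x\|^2 = \tfrac{\mu}{2}\|y\|^2 - \tfrac{\mu}{2}\|x\|^2$, which rearranges the above display into
$$f(y) - \frac{\mu}{2}\|y\|^2 \;\geq\; f(x) - \frac{\mu}{2}\|x\|^2 + \langle p, y-x\rangle ,$$
i.e.\ $g(y) \geq g(x) + \langle p, y-x\rangle$ for every $p \in \partial g(x)$. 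Invoking the standard convex-analysis characterization, this makes $g$ convex, hence $f = g + \tfrac{\mu}{2}\|\cdot\|^2$ is $\mu$-strongly convex.

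The only genuinely delicate point is that the argument assumes $\partial f$ (equivalently $\partial g$) is nonempty on a set large enough to force global convexity — which is automatic for proper lower semi-continuous convex functions on the relative interior of $\dom f$, the setting implicitly used in the paper. Should one wish to avoid this abstraction, a direct substitute is to fix $x_0,x_1\in\dom f$ and $\lambda\in(0,1)$, choose $q\in\partial f(x_\lambda)$ with $x_\lambda:=\lambda x_1+(1-\lambda)x_0$, apply the hypothesis at $(x_\lambda,x_0)$ and $(x_\lambda,x_1)$, and take the convex combination with weights $(1-\lambda)$ and $\lambda$: the linear term cancels since $(1-\lambda)(x_0-x_\lambda)+\lambda(x_1-x_\lambda)=0$, and using $\|x_\lambda-x_0\|^2 = \lambda^2\|x_1-x_0\|^2$, $\|x_\lambda-x_1\|^2 = (1-\lambda)^2\|x_1-x_0\|^2$ together with $(1-\lambda)\lambda^2+\lambda(1-\lambda)^2=\lambda(1-\lambda)$ delivers the definitional strong-convexity inequality directly. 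I would expect this technical point about the existence of subgradients to be the only thing requiring care; the rest is bookkeeping.
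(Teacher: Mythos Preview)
Your argument is correct. Both routes you sketch are valid: the reduction to convexity of $g(x)=f(x)-\tfrac{\mu}{2}\|x\|^2$ via the subdifferential shift $\partial g(x)=\partial f(x)-\mu x$, and the direct convex-combination argument at $x_\lambda$. The algebraic identities you invoke are all accurate, and you correctly flag the only delicate point, namely the need for $\partial f(x_\lambda)$ to be nonempty, which is guaranteed on the relative interior of $\dom f$ for $f\in\Gamma_0(\X)$.

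There is nothing to compare against, however: the paper lists this lemma in its Background section as a standard result and offers no proof of its own. So your proposal is not so much an alternative as the proof the paper omitted.
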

\begin{lemma} \label{lem:Sub-diff property}
    Given $f \in \Gamma_0(\X)$, if $q_1 \in \partial f(x_1)$ and $q_2 \in \partial f(x_2)$, then: 
    \begin{equation*} \label{eqn:Sub-diff property}
        \langle q_1 - q_2, x_1 - x_2 \rangle \geq 0 
    \end{equation*}
\end{lemma}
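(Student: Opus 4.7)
The statement is the classical monotonicity of the subdifferential of a convex function, so the plan is to invoke the subgradient inequality twice and add the two resulting inequalities.

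Concretely, since $f \in \Gamma_0(\X)$ is convex, for any $q \in \partial f(x)$ the subgradient inequality $f(y) \geq f(x) + \langle q, y - x \rangle$ holds for every $y \in \X$. This is the specialization of Lemma \ref{lem:charac for str-cvx} to the case $\mu = 0$, which is already available in the excerpt. The plan is therefore to apply this inequality first with the pair $(q_1, x_1)$ taking $y = x_2$, yielding
\begin{equation*}
f(x_2) \geq f(x_1) + \langle q_1, x_2 - x_1 \rangle,
\end{equation*}
and then with the pair $(q_2, x_2)$ taking $y = x_1$, yielding
\begin{equation*}
f(x_1) \geq f(x_2) + \langle q_2, x_1 - x_2 \rangle.
\end{equation*}

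Next, I would add these two inequalities. The terms $f(x_1) + f(x_2)$ cancel on both sides, leaving $0 \geq \langle q_1, x_2 - x_1 \rangle + \langle q_2, x_1 - x_2 \rangle$. Rewriting the right-hand side as $-\langle q_1 - q_2, x_1 - x_2 \rangle$ and moving it to the other side gives exactly the claimed inequality $\langle q_1 - q_2, x_1 - x_2 \rangle \geq 0$.

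There is no real obstacle here: the argument is a two-line manipulation relying only on the definition of the subdifferential for functions in $\Gamma_0(\X)$. The only point worth noting is that we implicitly need $x_1, x_2 \in \dom f$, but this is automatic since $\partial f(x_i) \neq \emptyset$ forces $x_i$ to lie in the effective domain of $f$, so the subgradient inequality may indeed be applied at both points.
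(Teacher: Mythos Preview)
Your proof is correct and is the standard two-line argument for monotonicity of the subdifferential. The paper itself states this lemma as a background result without proof, so there is nothing to compare against; your approach via the subgradient inequality (Lemma~\ref{lem:charac for str-cvx} with $\mu=0$) applied symmetrically and summed is exactly the canonical derivation.
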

\begin{lemma} \label{lem:prox property}
    Let $g \in \Gamma_0(\X)$, and denoting $p = \prox_{s g}(x)$, then we have for all $v \in \X$: 
    \begin{equation*} \label{eqn:prox property}
        g(p) + \frac{1}{2s} \|x - p\|^2 \leq g(v) + \frac{1}{2s} \|x - v\|^2 - \frac{1}{2s} \| p -v \|^2
    \end{equation*}
\end{lemma}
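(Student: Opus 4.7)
The plan is to reduce the claim to Lemma \ref{lem:charac for str-cvx} applied to the proximal subproblem's objective. I would begin by defining
\[
h \colon \X \to \R \cup \{+\infty\}, \qquad h(x') = g(x') + \frac{1}{2s}\|x' - x\|^2,
\]
and observe that $h$ is the sum of the convex function $g \in \Gamma_0(\X)$ and the $\frac{1}{s}$-strongly convex quadratic $\frac{1}{2s}\|\cdot - x\|^2$. Hence $h$ itself is $\frac{1}{s}$-strongly convex.

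Next, since $p = \prox_{sg}(x)$ is by definition the unique minimizer of $h$, first-order optimality together with the sum rule for sub-differentials (valid because the quadratic term is differentiable) gives
\[
0 \;\in\; \partial h(p) \;=\; \partial g(p) + \frac{1}{s}(p - x).
\]
Equivalently, the element $\tfrac{1}{s}(x-p) \in \partial g(p)$ is a sub-gradient of $g$ at $p$, but for the present argument the essential point is only that $0 \in \partial h(p)$.

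I would then apply Lemma \ref{lem:charac for str-cvx} to $h$ at the base point $p$, with sub-gradient $q = 0 \in \partial h(p)$ and test point $v \in \X$, obtaining
\[
h(v) \;\geq\; h(p) + \langle 0,\, v - p\rangle + \frac{1}{2s}\|v - p\|^2.
\]
Substituting the definition of $h$ on both sides yields
\[
g(v) + \frac{1}{2s}\|v - x\|^2 \;\geq\; g(p) + \frac{1}{2s}\|p - x\|^2 + \frac{1}{2s}\|p - v\|^2,
\]
which after rearrangement is exactly the claimed inequality.

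The argument has no real obstacle: the only points requiring care are justifying that $h$ is $\frac{1}{s}$-strongly convex and that $\partial h(p) = \partial g(p) + \frac{1}{s}(p-x)$, both of which are standard. Everything else is a direct invocation of Lemma \ref{lem:charac for str-cvx}, so this lemma is genuinely a one-line corollary of the strong-convexity characterization already recorded in the background section.
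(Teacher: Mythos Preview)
Your proof is correct. The paper itself does not supply a proof of this lemma; it is stated in the Background section as a standard property of the proximal operator and used later (e.g.\ in the proof of Lemma~\ref{lem:G lower bound}) without justification. Your argument, reducing the claim to the strong-convexity characterization of Lemma~\ref{lem:charac for str-cvx} applied to the proximal objective $h(x') = g(x') + \tfrac{1}{2s}\|x'-x\|^2$, is the canonical way to establish it and fits seamlessly with the tools the paper already records.
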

\begin{proposition}[Projection properties]

    Let $\C \subseteq \X$ be a nonempty, closed, and convex subset. Define $a := \proj_{\C}(x)$ for $x \in \X$. Then for any $u \in \C$, the following holds: 
    \begin{align}
        &\|a - x\|^2 \leq \|u - x\|^2 \label{eqn:proj - norms} \\
        &~ \langle u - a, a - x \rangle \geq 0  \label{eqn:proj - inner}
    \end{align}
\end{proposition}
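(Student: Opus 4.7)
The plan is to handle the two inequalities separately: the first, \eqref{eqn:proj - norms}, is essentially a restatement of the defining property of $\proj_\C$, while the second, \eqref{eqn:proj - inner}, requires a brief convex perturbation argument.

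First I would observe that because $\C$ is nonempty, closed, and convex in the Euclidean space $\X$, the point $a = \proj_\C(x)$ exists and is uniquely defined as the minimizer of $u \mapsto \|u - x\|^2$ over $\C$. Inequality \eqref{eqn:proj - norms} is then immediate, since by construction $a$ attains a value no larger than the objective at any other feasible $u \in \C$.

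For \eqref{eqn:proj - inner}, I would fix an arbitrary $u \in \C$ and exploit convexity of $\C$ to form $a_t := (1-t)a + t u \in \C$ for $t \in [0,1]$. Applying \eqref{eqn:proj - norms} at $a_t$ yields $\|a - x\|^2 \leq \|a_t - x\|^2$; expanding the right-hand side via $a_t - x = (a - x) + t(u - a)$ produces a quadratic in $t$ whose constant term matches the left-hand side. Subtracting $\|a - x\|^2$, dividing through by $t > 0$, and letting $t \to 0^+$ leaves $\langle a - x, u - a \rangle \geq 0$, which is equivalent to the claim after rewriting via the symmetry and bilinearity of the inner product. An equally clean route is to view $a$ as the minimizer on $\C$ of the differentiable convex function $g(u) = \tfrac{1}{2}\|u - x\|^2$, whose gradient at $a$ equals $a - x$, and invoke the first-order optimality condition $\langle \nabla g(a), u - a \rangle \geq 0$ for all $u \in \C$.

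There is no real obstacle here: both inequalities are classical cornerstones of convex analysis, and the only subtlety is the limiting step $t \to 0^+$ after dividing by $t$ (or, equivalently, correctly applying first-order optimality on a convex constraint set, which absorbs the limit into a single line). The main decision is stylistic — which of the two equivalent arguments best matches the tone of the background section.
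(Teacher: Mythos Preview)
Your argument is correct and entirely standard: \eqref{eqn:proj - norms} is the defining minimality of the projection, and \eqref{eqn:proj - inner} follows either by the convex-combination limiting argument you wrote or, equivalently, from the first-order optimality condition for minimizing $g(u)=\tfrac12\|u-x\|^2$ over the convex set $\C$. Note that the paper itself does not prove this proposition; it is listed in the Background section as a well-known result and left unproved, so there is no paper proof to compare against.
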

\begin{definition}[Separable function] \label{def:separable fun} 

We say that a function $\varphi \colon \R^n \rightarrow \R \cup \{+\infty\}$ is separable if there exists $n$ functions $\varphi_i \colon \R \rightarrow \R \cup \{+\infty\}$ such that $\displaystyle \forall x \in \R^n, \varphi(x) = \sum_{i = 1}^n \varphi_i(x_i)$.
\end{definition}
\begin{proposition}[Properties of Separable function] \label{props:Prop of separable fun} 

Let $\varphi \colon \R^n \rightarrow \R \cup \{+\infty\}$ be a separable function, then for any $x \in \R^n$, 
\begin{align}
   \partial \varphi (x) &= \partial \varphi_1(x_i) \times \dots \times \partial \varphi_n(x_n)  \label{eqn:separable, sub-diff} \\ 
   \prox_{\gamma \varphi}(x) &= \left(\prox_{\gamma \varphi_1}(x_1), \dots, \prox_{\gamma \varphi_n}(x_n) \right) \label{eqn:separable, prox}\\ 
   \sup_{x \in \R^n} \varphi(x) &= \sup_{x_1 \in \R} \varphi_1(x_1) + \dots + \sup_{x_n \in \R} \varphi_n(x_n) \label{eqn:separable, sup}
\end{align}
\end{proposition}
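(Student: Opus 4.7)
My plan is to prove the three properties in turn, exploiting in each case the fact that the variables decouple across coordinates.

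For the subdifferential formula \eqref{eqn:separable, sub-diff}, I would argue by the definition of the convex subdifferential. A vector $q = (q_1,\dots,q_n)$ lies in $\partial \varphi(x)$ iff $\varphi(y) \geq \varphi(x) + \langle q, y-x\rangle$ for all $y \in \R^n$. Writing this inequality via the separable decomposition gives $\sum_i \varphi_i(y_i) \geq \sum_i \varphi_i(x_i) + \sum_i q_i(y_i-x_i)$. The ``$\supseteq$'' direction is immediate: summing the coordinate inequalities $\varphi_i(y_i) \geq \varphi_i(x_i) + q_i(y_i-x_i)$ yields the global one. For ``$\subseteq$'', I would fix an index $i$ and take $y_j = x_j$ for $j \neq i$, which collapses the global inequality to $\varphi_i(y_i) \geq \varphi_i(x_i) + q_i(y_i - x_i)$, giving $q_i \in \partial \varphi_i(x_i)$.

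For the proximal formula \eqref{eqn:separable, prox}, I would write out the definition:
\begin{equation*}
  \prox_{\gamma\varphi}(x) = \arg\min_{x' \in \R^n} \sum_{i=1}^n \Bigl(\varphi_i(x'_i) + \frac{1}{2\gamma}(x'_i - x_i)^2\Bigr).
\end{equation*}
Since each coordinate $x'_i$ appears in exactly one summand, the global minimization splits into $n$ independent scalar minimizations, whose solutions are precisely $\prox_{\gamma \varphi_i}(x_i)$. This is the standard ``separable minimization splits'' argument; uniqueness of the prox (the objective is strictly convex in $x'$ whenever any minimizer exists) ensures the componentwise identification is unambiguous.

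For the supremum \eqref{eqn:separable, sup}, the same decoupling argument applies: $\sup_{x \in \R^n} \sum_i \varphi_i(x_i) \leq \sum_i \sup_{x_i \in \R} \varphi_i(x_i)$ is obvious termwise, while the reverse inequality follows by choosing near-maximizers $x_i^\varepsilon$ for each $\varphi_i$ independently and assembling them into a single vector. One must be slightly careful when some $\sup \varphi_i = +\infty$ (then both sides are $+\infty$) and when the suprema are attained (then the decoupled maximizers assemble into a genuine global maximizer); otherwise an $\varepsilon/n$ argument followed by $\varepsilon \to 0$ closes the gap.

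I do not anticipate a genuine obstacle here: all three statements are essentially consequences of the fact that ``operations that act coordinatewise commute with coordinatewise decomposition.'' The only point requiring minor care is keeping track of the convention $+\infty$ in the supremum identity and handling the case where the individual problems have no attained maximizer.
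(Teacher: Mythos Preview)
Your argument is correct and is the standard way to establish these three identities. The paper itself does not prove this proposition: it is listed in the background section as a known fact and invoked later without justification, so there is no ``paper's own proof'' to compare against. Your write-up would in fact fill that gap cleanly.
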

\begin{definition}[Fenchel-Legendre Conjugate]

Let $f \colon \X \rightarrow [- \infty, +\infty]$. The \textit{Fenchel-Legendre conjugate} of $f$ is the function $f^* \colon \X \rightarrow [-\infty, +\infty]$ defined by: 
\begin{equation*} \label{eqn:Fenchel-conjugate}
    f^*(\phi) = \sup_{x \in \X} \langle \phi, x \rangle - f(x), \hspace{2cm}  \phi \in \X
\end{equation*}
\end{definition}
\begin{proposition}[Fenchel-Young's inequality] \label{prop:Fenchel-Young}

Let $f \colon \X \rightarrow [-\infty, +\infty]$. For all $(x, \phi) \in \X\times \X$, the following inequality holds: 
\begin{equation*} \label{eqn:Fenchel-Young}
    f(x) + f^*(\phi) \geq \langle \phi, x \rangle 
\end{equation*}
with \textbf{equality} if, and only if, $\phi \in \partial f(x)$.
\end{proposition}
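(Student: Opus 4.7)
The plan is to prove both the inequality and the equivalence for equality directly from the definition of the Fenchel-Legendre conjugate, without invoking any heavy machinery. The inequality itself is essentially tautological: starting from $f^*(\phi) = \sup_{x' \in \X} \langle \phi, x' \rangle - f(x')$, I would simply restrict the supremum to the single point $x' = x$, which yields $f^*(\phi) \geq \langle \phi, x \rangle - f(x)$, and rearranging gives the claimed inequality. This handles the first part in one line.

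For the equality case, I would argue by two implications. Assume first that $\phi \in \partial f(x)$. By the subgradient inequality (which for convex $f$ is exactly the definition of $\partial f$), for every $y \in \X$ we have
\begin{equation*}
f(y) \geq f(x) + \langle \phi, y - x \rangle,
\end{equation*}
which rearranges to $\langle \phi, x \rangle - f(x) \geq \langle \phi, y \rangle - f(y)$. Taking the supremum over $y$ shows that $f^*(\phi) \leq \langle \phi, x \rangle - f(x)$, and combined with the already established reverse inequality this gives equality. Conversely, assume $f(x) + f^*(\phi) = \langle \phi, x \rangle$. Then for every $y \in \X$,
\begin{equation*}
\langle \phi, x \rangle - f(x) = f^*(\phi) \geq \langle \phi, y \rangle - f(y),
\end{equation*}
which rearranges back to $f(y) \geq f(x) + \langle \phi, y - x \rangle$ for all $y$, i.e. $\phi \in \partial f(x)$ by definition of the subdifferential.

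The only genuine subtlety is that the statement allows $f$ to take values in $[-\infty, +\infty]$ rather than restricting to $\Gamma_0(\X)$, so one should comment that the inequality is vacuously true whenever $f(x) = +\infty$ or $f^*(\phi) = +\infty$, and that the equality characterization is meaningful only at points $x \in \dom f$ where $\partial f(x)$ is well-defined. Apart from this bookkeeping, there is no real obstacle: the whole argument is just unpacking the definitions of $f^*$ and $\partial f$, and the two definitions are dual to each other in exactly the way the proposition records.
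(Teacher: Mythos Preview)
Your proof is correct and is the standard argument for Fenchel--Young. The paper itself states this proposition as background material without proof, so there is nothing to compare against; your derivation directly from the definitions of $f^*$ and $\partial f$ is exactly what one would expect here.
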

\begin{proposition} \label{prop: f and f* sub-diff}
    Let $f \colon \X \rightarrow (-\infty, +\infty]$ be proper, convex, and l.s.c. at some point $x \in \X$. Then, 
    \begin{equation*}  \label{eqn: f and f* sub-diff}
        \phi \in \partial f(x) \iff x \in \partial f^*(\phi)
    \end{equation*}
\end{proposition}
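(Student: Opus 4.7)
The plan is to leverage the Fenchel--Young inequality (Proposition \ref{prop:Fenchel-Young}) together with its equality case to establish both implications symmetrically. The key observation is that the equivalence $\phi \in \partial f(x) \iff f(x) + f^*(\phi) = \langle \phi, x \rangle$ provides a symmetric characterization of the sub-differential purely in terms of the conjugate pairing, which is already symmetric in $f$ and $f^*$.

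For the forward direction, I would assume $\phi \in \partial f(x)$, so by the equality case of Fenchel--Young, $f(x) + f^*(\phi) = \langle \phi, x \rangle$. For any test point $\psi \in \X$, the Fenchel--Young inequality applied to the pair $(x, \psi)$ gives $f(x) + f^*(\psi) \geq \langle \psi, x \rangle$. Subtracting the equality from this inequality eliminates $f(x)$ and yields $f^*(\psi) - f^*(\phi) \geq \langle x, \psi - \phi \rangle$, which is precisely the sub-gradient inequality defining $x \in \partial f^*(\phi)$.

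For the reverse direction, the natural approach is to repeat the argument with the roles of $f$ and $f^*$ swapped, but this requires the biconjugate identity $f^{**} = f$. Under the assumption that $f$ is proper, convex, and lower semi-continuous, the Fenchel--Moreau theorem guarantees $f^{**} = f$. Then, starting from $x \in \partial f^*(\phi)$, the forward direction applied to $f^*$ in place of $f$ (which is also proper, convex, and l.s.c.\ as a supremum of affine functions) yields $\phi \in \partial f^{**}(x) = \partial f(x)$.

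The main obstacle is ensuring that the biconjugate identity $f^{**} = f$ is available; this is the only place where the properness and lower semi-continuity hypotheses are genuinely needed, and it is a non-trivial fact typically proved via a Hahn--Banach separation argument on the epigraph of $f$. Since this is a classical result in convex analysis, I would simply quote it rather than reprove it. An alternative, fully symmetric formulation would be to note that each of the three conditions $\phi \in \partial f(x)$, $x \in \partial f^*(\phi)$, and $f(x) + f^*(\phi) = \langle \phi, x \rangle$ is equivalent to the middle one via the Fenchel--Young equality case (the first equivalence being Proposition \ref{prop:Fenchel-Young} for $f$, and the second being Proposition \ref{prop:Fenchel-Young} applied to $f^*$ together with $f^{**} = f$), which makes the biconditional immediate.
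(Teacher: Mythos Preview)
The paper states this proposition as a background fact in Section~\ref{sec:background} without proof, so there is no paper argument to compare against. Your approach via the Fenchel--Young equality case and the biconjugate identity $f^{**}=f$ is the standard textbook proof and is correct; the only subtlety is that the hypothesis as written (``l.s.c.\ at some point $x$'') is slightly weaker than global lower semi-continuity, but what you actually need for the reverse implication is only $f^{**}(x)=f(x)$ at that specific point, which indeed follows from properness, convexity, and l.s.c.\ at $x$.
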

\begin{proposition}[Moreau's identity] \label{prop: Moreau's identity }

    Consider $f \in \Gamma_0(\X)$ and $s > 0$. Then, for any $x \in \X$, 
    \begin{equation*} \label{eqn: Moreau's identity }
        \prox_{sf}(x) + s \prox_{s^{-1}f^*}\left(\frac{x}{s}\right) = x
    \end{equation*}
\end{proposition}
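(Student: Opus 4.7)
The plan is to prove Moreau's identity by passing through first-order optimality conditions and using Proposition \ref{prop: f and f* sub-diff} to switch between the subdifferentials of $f$ and $f^*$. Let $p := \prox_{sf}(x)$. By the definition of the proximal operator, $p$ minimizes $f(\cdot) + \frac{1}{2s}\|\cdot - x\|^2$ on $\X$; writing the subdifferential optimality condition gives
\begin{equation*}
    0 \in \partial f(p) + \frac{1}{s}(p - x), \qquad \text{i.e.} \qquad \frac{x - p}{s} \in \partial f(p).
\end{equation*}

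Next, I would apply Proposition \ref{prop: f and f* sub-diff} to the pair $(p,(x-p)/s)$, which converts this inclusion into its dual form
\begin{equation*}
    p \in \partial f^*\!\left(\frac{x-p}{s}\right).
\end{equation*}
The goal is then to recognize this as the first-order optimality condition characterizing $\prox_{s^{-1}f^*}(x/s)$. Setting $q := (x-p)/s$, the inclusion above becomes $x - sq \in \partial f^*(q)$, or equivalently $0 \in \partial f^*(q) + s(q - x/s)$, which is exactly the optimality condition for the minimizer of $f^*(\cdot) + \frac{s}{2}\|\cdot - x/s\|^2$ on $\X$.

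By strong convexity of the quadratic term, the proximal operator of $s^{-1}f^*$ at $x/s$ is uniquely characterized by this inclusion, so $q = \prox_{s^{-1}f^*}(x/s)$. Recalling $q = (x-p)/s$, one gets $x - p = s\, \prox_{s^{-1}f^*}(x/s)$, which is the claimed identity. The main subtlety — not really an obstacle but worth tracking carefully — is the bookkeeping of the scalar $s$ when translating the defining variational inequality of $\prox_{sf}$ into that of $\prox_{s^{-1}f^*}$; once one aligns conventions between $\prox_{\gamma g}(y) = \arg\min g + \tfrac{1}{2\gamma}\|\cdot - y\|^2$ at $\gamma = s^{-1}$, $g = f^*$, $y = x/s$, the argument closes immediately.
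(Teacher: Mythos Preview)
Your proof is correct and is the standard argument for Moreau's identity via subdifferential optimality conditions together with Proposition~\ref{prop: f and f* sub-diff}. Note that the paper itself does not supply a proof of this proposition: it is listed in the Background section as a known result without proof, so there is no paper proof to compare against.
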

\begin{definition} \label{def:MSR}
    A set-valued function $F \colon \Z \rightrightarrows \Z$ is \textbf{metrically sub-regular} at $z$ for $v$ if there exists $\gamma > 0$ and a neighborhood $N(z)$ of $z$ such that $\forall z' \in N(z)$, 
    \begin{equation*}
        \dist\left(F(z'), v\right) \geq \gamma \dist\left(z', F^{-1}(v)\right) 
    \end{equation*}
\end{definition}
\begin{definition} \label{def:QEB}
 We say that a function $f \colon \X \rightarrow \R \cup \{+\infty\}$ has a \textit{quadratic error bound} if there exists $\eta$ and an open region $\mathcal{R} \subseteq \X$ that contains $\arg\min f$ such that for all $x \in \mathcal{R}$: 
    \begin{equation*}
        f(x) - \min f \geq \frac{\eta}{2} \dist(x, \arg\min f)^2
    \end{equation*}
    We shall use the acronym $f$ has an $\eta$-QEB.
\end{definition}
\begin{proposition}[\textit{Proposition 2 in \cite{QEB_fercoq}}] \label{lem:MSR-->QEB}

   Let $f$ be a convex function such that $f(x) \leq f_0$ implies $\left \| \partial f(x) \right \|_0 \geq \eta \dist\left(x, \X^{\star}\right)$. Then, $f(x) \geq f(x^{\star}) + \frac{\eta}{2} \dist\left(x, \X^{\star}\right)$ as soon as $f(x) \leq f_0$. 
\end{proposition}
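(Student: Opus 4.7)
The plan is to translate the hypothesised subregularity $\|\partial f(x)\|_0 \geq \eta\,\dist(x,\X^\star)$ into a Polyak--\L{}ojasiewicz-type inequality, then to integrate it along the subgradient flow of $f$. As printed, the right-hand side of the conclusion appears to miss a square; the argument below delivers the quadratic growth $f(x) - f(x^\star) \geq c\,\dist(x,\X^\star)^2$ of Definition~\ref{def:QEB}.

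\emph{Step 1: subregularity $\Rightarrow$ PL.} Fix $x$ with $f(x) \leq f_0$, pick $q \in \partial f(x)$ of minimal norm so that $\|q\| = \|\partial f(x)\|_0$, and let $\bar x = \proj_{\X^\star}(x)$. The subgradient inequality for $f$ at $x$ with displacement $x - \bar x$, combined with Cauchy--Schwarz, gives
\begin{equation*}
f(x) - f(x^\star) \leq \langle q,\, x - \bar x\rangle \leq \|q\|\,\dist(x,\X^\star).
\end{equation*}
Inserting the hypothesis in the form $\dist(x,\X^\star) \leq \|q\|/\eta$ eliminates the distance and yields the Polyak--\L{}ojasiewicz inequality
\begin{equation*}
\|q\|^2 \geq \eta\bigl(f(x) - f(x^\star)\bigr),\qquad \text{valid whenever } f(x) \leq f_0.
\end{equation*}

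\emph{Step 2: PL $\Rightarrow$ quadratic growth.} Let $x(\cdot)$ denote the steepest-descent curve issued from $x$, i.e.\ the absolutely continuous solution of $\dot x(t) \in -\partial f(x(t))$ that selects the minimal-norm subgradient $q(t)$ at each time; existence and uniqueness follow from Brezis' theory of maximal monotone operators. Since $f(x(\cdot))$ is non-increasing along this curve, the PL inequality of Step~1 applies at every $t \geq 0$. Setting $\phi(t) = \sqrt{f(x(t)) - f(x^\star)}$ and differentiating,
\begin{equation*}
\phi'(t) \;=\; -\frac{\|q(t)\|^2}{2\,\phi(t)} \;\leq\; -\frac{\sqrt{\eta}}{2}\,\|q(t)\| \;=\; -\frac{\sqrt{\eta}}{2}\,\|\dot x(t)\|,
\end{equation*}
where the middle inequality uses $\|q(t)\|^2 \geq \eta\,\phi(t)^2$. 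Integrating over $[0,\infty)$, using $\phi(\infty)=0$ (the flow converges to some $x_\infty \in \X^\star$), and bounding the total arc length from below by the chord $\|x - x_\infty\| \geq \dist(x,\X^\star)$, we obtain $\phi(0) \geq (\sqrt{\eta}/2)\,\dist(x,\X^\star)$. Squaring delivers the quadratic error bound.

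The main obstacle is the asymptotic analysis of the subgradient flow: justifying $x(t) \to x_\infty \in \X^\star$ and $f(x(t)) \to f(x^\star)$. Both are classical consequences of Brezis' framework combined with the PL inequality. For any fixed $\bar x \in \X^\star$, $\|x(t) - \bar x\|^2$ is a Lyapunov function (its derivative is $-2\langle q(t), x(t) - \bar x\rangle \leq -2(f(x(t)) - f^\star) \leq 0$), and the identity $\int_0^\infty \|q(t)\|^2\,dt = f(x) - f^\star < \infty$ forces $f(x(t)) \to f^\star$ and hence $\phi(t) \to 0$. Everything else reduces to the routine ODE manipulation on $\phi$ described in Step~2.
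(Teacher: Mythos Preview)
The paper does not supply its own proof of this proposition; it is quoted verbatim as Proposition~2 of \cite{QEB_fercoq}. Your observation about the missing square is consistent with how the result is invoked later (to conclude that $\G$ has a quadratic error bound in the sense of Definition~\ref{def:QEB}), so there is nothing in the present paper to compare against beyond that.

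Your argument is sound and yields $f(x)-f(x^\star)\ge \tfrac{\eta}{4}\dist(x,\X^\star)^2$, but the detour through the PL inequality costs a factor of~$2$: in Step~1 you replace the hypothesis $\|q\|\ge\eta\,\dist(x,\X^\star)$ by its weaker consequence $\|q\|^2\ge\eta\bigl(f(x)-f^\star\bigr)$, and this is what degrades the constant. To recover the sharp $\eta/2$ (attained by $f(x)=\tfrac{\eta}{2}\|x\|^2$), keep the original hypothesis along the flow. Writing $r(t)=\dist(x(t),\X^\star)$, one has $|\dot r(t)|\le\|\dot x(t)\|=\|q(t)\|$ (the distance to a set is $1$-Lipschitz) and, by monotonicity of $\partial f$ together with $0\in\partial f\bigl(\proj_{\X^\star}(x(t))\bigr)$, the map $r$ is nonincreasing. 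Hence
\[
\frac{d}{dt}f(x(t)) \;=\; -\|q(t)\|^2 \;\le\; -\eta\,r(t)\,\|q(t)\| \;\le\; \eta\,r(t)\,\dot r(t) \;=\; \frac{\eta}{2}\,\frac{d}{dt}r(t)^2,
\]
and integrating over $[0,\infty)$ gives $f(x)-f^\star\ge\tfrac{\eta}{2}\dist(x,\X^\star)^2$. The remainder of your Step~2 (existence of the flow via Brezis, finite arc length, convergence of $x(t)$ to some $x_\infty\in\X^\star$) carries over unchanged.
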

\begin{lemma}
    Let $M \in \R^{m \times n}$ be a symmetric matrix, then for any vector $x \in \R^n$: 
    \begin{equation*} \label{eqn:Rayleigh}
        \|Mx\| \geq |\lambda(M)|_{\min} \|x\| 
    \end{equation*}
    where $|\lambda(M)|_{\min}$ is the smallest absolute value of the non-zero eigenvalues of $M$. 
\end{lemma}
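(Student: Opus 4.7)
The plan is to deduce the inequality from the spectral theorem, after clarifying the hidden assumption that $x$ lies in the range of $M$ (equivalently, $x$ is orthogonal to $\ker M$). Indeed, the statement as written is only correct under this restriction: if $x \in \ker M$, then $\|Mx\| = 0$ while $\|x\|$ may be strictly positive, which would violate the inequality whenever $M$ has a nonzero eigenvalue. In the places where this lemma is invoked in the paper, $x$ will presumably be of the form $M u$ for some $u$, and hence will lie in $\Ran(M)$ automatically.

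First I would apply the spectral theorem to $M$: since $M$ is real symmetric, write $M = U \Lambda U^\top$ with $U$ orthogonal and $\Lambda = \mathrm{diag}(\lambda_1, \dots, \lambda_n)$. Let $y = U^\top x$, so that $\|y\| = \|x\|$ because $U$ is an isometry, and $Mx = U \Lambda y$, yielding
\begin{equation*}
  \|Mx\|^2 \;=\; \|\Lambda y\|^2 \;=\; \sum_{i=1}^n \lambda_i^2\, y_i^2 \;=\; \sum_{i:\lambda_i \neq 0} \lambda_i^2\, y_i^2.
\end{equation*}
Factoring out the smallest nonzero magnitude gives $\|Mx\|^2 \geq |\lambda(M)|_{\min}^2 \sum_{i:\lambda_i \neq 0} y_i^2$.

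Next I would relate the truncated sum $\sum_{i:\lambda_i \neq 0} y_i^2$ to $\|x\|^2$. In the orthonormal eigenbasis provided by the columns of $U$, the components $y_i$ with $\lambda_i = 0$ are precisely the coordinates of $x$ along $\ker M$, while those with $\lambda_i \neq 0$ are the coordinates along $\Ran(M) = (\ker M)^\perp$ (this is where symmetry of $M$ is essential). Consequently,
\begin{equation*}
  \sum_{i:\lambda_i \neq 0} y_i^2 \;=\; \|\proj_{\Ran(M)}(x)\|^2,
\end{equation*}
so that $\|Mx\| \geq |\lambda(M)|_{\min}\,\|\proj_{\Ran(M)}(x)\|$. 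Under the hypothesis $x \in \Ran(M)$ we have $\proj_{\Ran(M)}(x) = x$, which yields the claimed inequality $\|Mx\| \geq |\lambda(M)|_{\min}\,\|x\|$.

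The only real obstacle is the one I flagged at the outset: making the statement mathematically correct by specifying that $x$ must be orthogonal to $\ker M$. Once that is acknowledged, the proof reduces to a routine diagonalization argument and an application of the elementary bound $\sum a_i^2 y_i^2 \geq (\min_{a_i \neq 0} a_i^2) \sum_{a_i \neq 0} y_i^2$. No additional tools beyond the spectral theorem and basic orthogonality are needed.
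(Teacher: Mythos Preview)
The paper states this lemma in its Background section without proof, so there is no paper proof to compare against. Your argument via the spectral theorem is the standard one and is correct.

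Your identification of the missing hypothesis is the real content of your write-up, and it is spot on: the inequality fails for nonzero $x \in \ker M$, so the statement only holds when $x \perp \ker M$ (equivalently $x \in \Ran(M)$ by symmetry of $M$). In the paper's sole application (Lemma~\ref{lem:LC-LS--MSR} in Appendix~\ref{appendix:MSR et QEB, LC-LS}), the vector in question is $z - z^{\star}$ with $z^{\star} = \proj_{\Z^{\star}}(z)$, and the saddle-point set $\Z^{\star}$ is the affine space $\{z : \mathcal{M}z = \mathcal{M}z^{\star}\}$, i.e.\ a translate of $\ker \mathcal{M}$. Projecting onto such an affine subspace forces $z - z^{\star} \perp \ker \mathcal{M}$, so the hidden hypothesis is indeed satisfied there, exactly as you anticipated.
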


\section{Optimality measures} \label{sec:optimality measures}

Within this section, our objective is to establish a common understanding by defining the aforementioned optimality measures and the concept of $\varepsilon-$solution. Furthermore, where necessary, we provide a comprehensive analysis supporting these definitions.
\begin{definition}[$\varepsilon-$Solution] \label{def:eps - solution}

Given a target accuracy $\varepsilon > 0$.  A point $\hat{x} \in \mathcal{X}$ is said to be an $\varepsilon-$Solution of (\ref{Primal problem}) if:
    \begin{align} \label{eqn:eps - solution}
    |f(\hat{x}) - f(x^{\star})| \leq \varepsilon && \text{and} && \|A\hat{x} - b\| \leq \varepsilon 
\end{align}
\end{definition}

\begin{definition}[\acrfull{OGFE}] \label{def:OGFG}

    The \textit{Optimality gap}, and the \textit{Feasibility error} for (\ref{Saddle pt. prob.}) at a point $\hat{x} \in \X$ are defined, respectively, as follows:
    \begin{align} \label{eqn:OGFG}
        \Op(\hat{x}) &= \max\left(f(\hat{x}) - f^{\star}, 0\right) & \F(\hat{x}) &= \left\|A\hat{x} - b\right\| 
    \end{align}
\end{definition}
One can observe that the definition of the \acrlong{ogfe} aligns precisely with the definition of $\varepsilon-$solutions. Consequently, any combination of the \textit{\acrlong{og}} and the \textit{\acrlong{fe}} can be utilized to assess whether a provided solution constitutes an $\varepsilon-$solution. However, due to the uncomputability of the \textit{\acrlong{og}}, determining whether a solution truly qualifies as an $\varepsilon$ solution becomes a challenging task.

Next, we introduce the \textit{\acrlong{kkt}}, which is quite similar to the outlined definition of the \textit{\acrlong{ids}} as presented in \cite{lu2023infimal}. They employ the same definition as we do but with weighted norms. This definition draws inspiration from the Karush-Kuhn-Tucker conditions applied to (\ref{Saddle pt. prob.}), where the saddle points are identified by having the sub-differential of the associated Lagrangian equal to 0. Said otherwise, $(x^{\star}, y^{\star})$ qualifies as a saddle point for (\ref{Saddle pt. prob.}) if, and only if:
\begin{align*}
    &\partial_x \Lag(x^{\star}, y^{\star}) = \partial f(x^{\star}) + A^Ty^{\star} = 0  &&\text{(Stationarity)} \\
    &\partial_y \Lag(x^{\star}, y^{\star}) = Ax^{\star} \hspace{0.5cm}- b \hspace{0.725cm} = 0  &&\text{(Primal-feasibility)}
\end{align*}
Therefore, employing any combination of these two conditions would provide insight into whether the current point functions as a saddle point or not. We utilize the squared norm of the vector that combines these two conditions.
\begin{definition}[\acrlong{kkt} (\rt{KKT})] \label{def:KKT error}
    
    The Karush-Kuhn-Tucker error for (\ref{Saddle pt. prob.}) is defined as follows: 
    \begin{equation} \label{eqn:KKT error}
        \K(z) := \left\|\partial f(x) + A^Ty \right\|^2_0 + \left\|Ax - b\right\|^2 
    \end{equation}
    where we define the "Infimal size" of a set $\Q$ as: 
    \begin{equation}
        \left\|\Q\right\|_0 := \min \{\|q\| ~|~ q \in \Q\}
    \end{equation}
\end{definition}
\subsection{Projected Duality Gap}
The optimality measure, which we termed as the \textit{\acrlong{pdg}}, was initially introduced in \cite{PDG} only for linear programs. This metric serves as the stopping criterion utilized in the \texttt{\href{https://docs.scipy.org/doc/scipy/reference/optimize.linprog-interior-point.html}{linprog}} solver within SciPy for Python. In this work, we have extended its application to integrate with our (\ref{Saddle pt. prob.}) framework. Essentially, our generalization operates quite similarly to the conventional duality gap. However, it differs in that it computes the duality gap at each iteration while simultaneously projecting the primal-dual solution onto their respective feasibility spaces. Consequently, this method always yields a finite value, unlike the conventional duality gap.
\begin{definition}[\acrfull{PDG}] \label{def:PDG}
    
    The Projected Duality Gap for (\ref{Saddle pt. prob.}) is defined as follows: 
    \begin{align} 
        \D(z) &:= |f(x) + f^*(a) + \langle b, y\rangle|^2 +  \left\| a +A^Ty \right\|^2 + \left\|Ax - b\right\|^2 \label{eqn:PDG}\\ 
        a &:= \proj_{\dom f^*}\left(-A^Ty\right) \label{eqn:PDG - a}
    \end{align}
\end{definition}
This definition is inspired by the aforementioned saddle point problem (\ref{Saddle pt. prob.}):
\begin{align*}
    \min_{x \in \X} \max_{y \in \Y} ~ \Lag(x, y) := f(x) + \left\langle Ax - b, y \right\rangle 
\end{align*}
Thanks to the Fenchel-Legendre transform, we can express the associated primal and dual problems equivalently in terms of it. That is: 
\begin{align*}
    \min_{x \in \X} &\max_{y \in \Y} f(x) + \left\langle Ax, y \right\rangle - \langle b, y \rangle \\ 
    \equiv &\min_{x \in \X} f(x) + \imath_{\{b\}}\left(Ax\right) \\ 
    \equiv &\max_{y \in \Y}  -f^*\left(-A^Ty\right) - \langle b, y \rangle 
\end{align*}
Therefore, $(x, y)$ is a saddle point if the following conditions hold: 
\begin{empheq}[left=\empheqlbrace]{align*}
&|f(x) + \imath_{\{b\}}\left(Ax\right) + f^*\left(-A^Ty\right) + \langle b, y \rangle| = 0  \\
&Ax \in \dom(\imath_{\{b\}}) \equiv Ax = b\\
&-A^Ty \in \dom(f^*)
\end{empheq}
Hence, by considering any combination of these conditions, an optimality measure is obtained. We take the squared norm of the vector combining the three conditions. 

The fourth measure under consideration is entirely novel and less well-studied compared to the aforementioned ones. This is an area where we invest more time and effort in our study. Consequently, we allocate a dedicated section to comprehensively introduce it, along with deriving some new properties.

\section{Smoothed Duality Gap} \label{sec:SDG}

In this section, we present the last optimality measure along with some new properties. The \textit{\acrlong{sdg}}, initially introduced in \cite{SDG_intro}, represents a novel measure of optimality that is widely applicable but remains less studied compared to the previously discussed ones.
\begin{definition}[\textit{Definition 4 in \cite{QEB_fercoq}}]. \label{def:SDG general}

Given $\beta = (\beta_x, \beta_y) \in [0, +\infty]^2, z \in \Z$ and $\Dot{z} \in \Z$, the smoothed gap $\G$ is the function defined by:
\begin{equation} \label{eqn:SDG general}
    \G(z;\Dot{z}) = \sup_{z' \in \mathcal{Z}} \mathcal{L}(x, y') - \mathcal{L}(x', y) - \frac{\beta_x}{2} \|x' - \Dot{x}\|^2 - \frac{\beta_y}{2} \|y' - \Dot{y}\|^2
\end{equation}
\end{definition}
When the smoothness parameter $\beta = 0$, we recover the conventional duality gap. The smoothed duality gap concept involves smoothing the duality gap through a proximity function \cite{Nesterov2005SmoothMO}, thereby ensuring that the smoothed duality gap attains finite values for constrained problems, unlike its conventional counterpart. Additionally, when the smoothness parameter is small and the smoothed duality gap is small, it signifies that both the optimality gap and the feasibility error are also small.

Moreover, the author in \cite{QEB_fercoq} has found that the smoothed duality gap offers a robust outcome. Independently of any unknown or uncomputable variables, it serves as a valid optimality measure. Therefore,  it could be utilized as a stopping criterion.
\begin{definition}[\textit{Definition 5 in \cite{QEB_fercoq}}]

Given $\beta = (\beta_x, \beta_y) \in [0, +\infty[^2$, and $z \in \Z$, the \textit{self-centered} smoothed gap is given by $\G(z, z)$. 
\end{definition}
\begin{theorem}[\textit{Proposition 15 in \cite{QEB_fercoq}}] 

The self-centered smoothed gap is a measure of optimality. Indeed, $\forall z \in \Z, \forall \beta \in [0, +\infty[^2$:
\begin{enumerate*}[label=(\roman*)]
    \item $\G(z, z) \geq 0$
    and 
    \item $\G(z, z) = 0 \iff z \in \Z^{\star}$
\end{enumerate*}
\end{theorem}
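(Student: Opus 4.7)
The plan is to split the claim into the two items and, for each, work directly from the defining supremum in \eqref{eqn:SDG general}. For (i), I would simply evaluate the supremand at the candidate $z' = z$: the Lagrangian difference $\Lag(x,y) - \Lag(x,y)$ vanishes, as do both quadratic penalties, so the supremum over $z' \in \Z$ is at least $0$. This is immediate and gives $\G(z,z)\geq 0$.

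For (ii), the easy direction $(\Leftarrow)$: if $z = (x,y)\in \Z^\star$, then by the saddle point inequality $\Lag(x,y')\leq \Lag(x,y)\leq \Lag(x',y)$ for every $(x',y')$, so $\Lag(x,y')-\Lag(x',y)\leq 0$ for all $z'\in\Z$. Subtracting the non-negative penalties preserves this, and combined with the lower bound from (i) gives $\G(z,z)=0$.

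The harder direction $(\Rightarrow)$ is the main obstacle: I need to extract both feasibility and primal optimality from the fact that the sup equals $0$, which a priori says only that a weighted envelope vanishes. My plan is to test the inequality separately along the two axes of $\Z$. Fixing $x' = x$ forces, for every $y' \in \Y$,
\[
\langle Ax-b,\, y'-y\rangle \;=\; \Lag(x,y')-\Lag(x,y) \;\leq\; \tfrac{\beta_y}{2}\|y'-y\|^2,
\]
using linearity of $\Lag(x,\cdot)$. Writing $y' = y + t v$ for arbitrary $v\in\Y$ and $t>0$ and letting $t\to 0^+$ kills the quadratic term (regardless of whether $\beta_y=0$ or $\beta_y>0$) and yields $\langle Ax-b, v\rangle\leq 0$ for every $v$; choosing $v = \pm(Ax-b)$ gives the primal feasibility $Ax=b$. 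Symmetrically, fixing $y' = y$ shows that $x$ minimizes $x'\mapsto \Lag(x',y) + \tfrac{\beta_x}{2}\|x'-x\|^2$, so Fermat's rule gives $0\in \partial_x\Lag(x,y) = \partial f(x)+A^T y$. By convexity this is equivalent to $x\in\arg\min_{x'}\Lag(x',y)$.

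Finally I would combine both pieces: $Ax=b$ makes $\Lag(x,\cdot)$ constant on $\Y$, hence $y$ trivially maximizes $\Lag(x,\cdot)$; together with $x\in\arg\min \Lag(\cdot,y)$ this is precisely the saddle-point property, so $z\in\Z^\star$. The subtle point I would be careful about is the case $\beta_y = 0$, where the quadratic penalty is absent and the linear-in-$y'$ argument must be carried out by a direction-and-magnitude scaling, and the case $\beta_x=0$, where the first-order condition follows directly rather than through a proximal subdifferential; the scaling argument above handles both uniformly.
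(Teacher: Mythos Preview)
The paper does not supply its own proof of this theorem; it is quoted verbatim as Proposition~15 from \cite{QEB_fercoq} and used without further argument. There is therefore nothing in the present paper to compare your proposal against.

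That said, your argument is correct and self-contained. Part~(i) and the direction $(\Leftarrow)$ of~(ii) are immediate, as you observe. For $(\Rightarrow)$, testing the supremand separately at $x'=x$ and at $y'=y$ is the natural approach: the linear dependence of $\Lag(x,\cdot)$ on $y'$ lets the scaling $y'=y+tv$, $t\to 0^+$ annihilate the quadratic penalty and force $Ax=b$; on the other axis, the inequality $\Lag(x,y)\leq \Lag(x',y)+\tfrac{\beta_x}{2}\|x'-x\|^2$ for all $x'$ identifies $x$ as the minimizer of the penalized map, so Fermat's rule at $x'=x$ gives $0\in\partial f(x)+A^Ty$ (the penalty contributes $\beta_x(x-x)=0$ regardless of $\beta_x$). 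These two facts are exactly the KKT conditions that the paper itself records as characterizing $\Z^\star$, so the conclusion follows. Your remarks on the boundary cases $\beta_x=0$ and $\beta_y=0$ are also handled correctly by the same scaling and Fermat arguments.
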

An obstacle in the definition of the smoothed duality gap lies in it constituting an optimization problem in itself, thereby adding complexity. However, for the previously mentioned (\ref{Saddle pt. prob.}), we have managed to derive a closed-form expression for the smoothed duality gap.
\begin{proposition} \label{props:SDG}
The self-centered smoothed gap for (\ref{Saddle pt. prob.}) can be computed as follows: 
    \begin{align} 
            \G(z) &:= f(x) - f(p) + \langle A(x - p), y \rangle - \frac{\beta_x}{2} \|p - x\|^2 + \frac{1}{2\beta_y} \left\|Ax - b\right\|^2 \label{eqn:SDG}\\ 
            p&:=  \prox_{\beta_x^{-1}f} \left(x - \frac{1}{\beta_x}A^Ty\right) \label{eqn:p}
        \end{align}
    \end{proposition}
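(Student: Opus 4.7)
The plan is to unfold the supremum in the self-centered smoothed gap $\G(z;z)$ and exploit the fact that, because $\Lag(x,y') - \Lag(x',y)$ is jointly affine in $(x',y')$, the variables $x'$ and $y'$ decouple once the proximal regularizers $\tfrac{\beta_x}{2}\|x'-x\|^2$ and $\tfrac{\beta_y}{2}\|y'-y\|^2$ are added. The two resulting suprema can then be solved in closed form: one by completing the square, the other by recognizing a proximal step.

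First I would expand
\begin{equation*}
\Lag(x,y') - \Lag(x',y) = f(x) - f(x') + \langle Ax-b,\,y'\rangle - \langle Ax'-b,\,y\rangle,
\end{equation*}
and observe that $f(x)$ is a constant with respect to $(x',y')$, while the remaining terms split additively. This lets me write
\begin{equation*}
\G(z;z) = f(x) + S_y(x) + S_x(y),
\end{equation*}
where
\begin{align*}
S_y(x) &= \sup_{y' \in \Y}\Big\{\langle Ax-b,\,y'\rangle - \tfrac{\beta_y}{2}\|y'-y\|^2\Big\}, \\
S_x(y) &= \sup_{x' \in \X}\Big\{-f(x') - \langle Ax'-b,\,y\rangle - \tfrac{\beta_x}{2}\|x'-x\|^2\Big\}.
\end{align*}

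For $S_y(x)$, substituting $u = y'-y$ reduces it to an unconstrained quadratic maximization whose optimum is $u^\star = (Ax-b)/\beta_y$, yielding $S_y(x) = \langle Ax-b,\,y\rangle + \tfrac{1}{2\beta_y}\|Ax-b\|^2$. For $S_x(y)$, I would absorb the linear term $\langle A^Ty,\,x'\rangle$ into the squared norm by completing the square: up to a constant,
\begin{equation*}
\langle A^Ty,\,x'\rangle + \tfrac{\beta_x}{2}\|x'-x\|^2 = \tfrac{\beta_x}{2}\Big\|x' - \big(x - \tfrac{1}{\beta_x}A^Ty\big)\Big\|^2 + (\text{const}),
\end{equation*}
so the unique maximizer of $S_x(y)$ is precisely $p = \prox_{\beta_x^{-1}f}\!\big(x - \tfrac{1}{\beta_x}A^Ty\big)$, giving $S_x(y) = -f(p) - \langle Ap-b,\,y\rangle - \tfrac{\beta_x}{2}\|p-x\|^2$.

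Adding the two suprema to $f(x)$ produces
\begin{equation*}
\G(z;z) = f(x) - f(p) + \langle A(x-p),\,y\rangle - \tfrac{\beta_x}{2}\|p-x\|^2 + \tfrac{1}{2\beta_y}\|Ax-b\|^2,
\end{equation*}
which is the claimed identity. The only nontrivial step is the completion-of-the-square identification of $p$ as a proximal point; everything else is algebraic rearrangement and the separability of the supremum. The bookkeeping around the sign of $\langle b,y\rangle$ terms (which must cancel between $S_y$ and $S_x$ to leave $\langle A(x-p),y\rangle$) is the one place where I would double-check the calculation.
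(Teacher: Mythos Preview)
Your proposal is correct and follows essentially the same approach as the paper: separate the joint supremum into independent maximizations over $x'$ and $y'$, solve the quadratic in $y'$ explicitly, and identify the $x'$ subproblem as a proximal step via completing the square. The paper carries out the completing-the-square step for $p$ with slightly more explicit intermediate algebra, but the structure and key idea are identical to yours.
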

    \begin{proof}
 We start with the definition of the smoothed duality gap: 
    \begin{align*}
        \G(z) &= \begin{multlined}[t] f(x) + \langle b, y \rangle + {\color{red} \max_{x'} -f(x') -\langle Ax', y \rangle - \frac{\beta_x}{2} \|x'-x\|^2} + {\color{ao(english)} \max_{y'} \langle Ax - b, y' \rangle } \\ {\color{ao(english)} - \frac{\beta_y}{2} \|y' - y\|^2} \end{multlined} \\ 
        &= \begin{multlined}[t] f(x) + \langle b, y \rangle {\color{red} -f({\color{blue} p}) -\langle A{\color{blue} p}, y \rangle - \frac{\beta_x}{2} \|{\color{blue} p}-x\|^2} + {\color{ao(english)} \left\langle Ax- b, y \right\rangle + \frac{1}{2\beta_y} \left\|Ax - b\right\|^2 }\end{multlined} \\ 
        &= f(x) - f(p) + \langle A(x - p), y \rangle - \frac{\beta_x}{2} \|p - x\|^2 + \frac{1}{2\beta_y} \|Ax - b\|^2
    \end{align*}
    where 
 \begin{align*}
    p &= \arg\max_{x'} -f(x') - \left\langle Ax', y \right\rangle - \frac{\beta_x}{2} \|x' - x\|^2 \\
      &= \arg\min_{x'} f(x') + \langle x', A^Ty \rangle + \frac{\beta_x}{2} \left\|x' - \left(x {\color{blue} - \frac{1}{\beta_x} A^Ty}\right){\color{blue} - \frac{1}{\beta_x} A^Ty}\right\|^2 \\ 
        &= \begin{multlined}[t] \arg\min_{x'} f(x') {\color{red} + \langle x', A^Ty \rangle} + \frac{\beta_x}{2} \left\|x' - \left(x - \frac{1}{\beta_x} A^Ty\right)\right\|^2 {\color{ao(english)} + \frac{1}{2\beta_x}\left\|A^Ty\right\|^2}  \\ {\color{red} - \left\langle x', A^Ty \right\rangle} {\color{ao(english)} + \langle x - \frac{1}{\beta_x} A^Ty, A^Ty \rangle}\end{multlined} \\
    &= \prox_{\beta_x^{-1}f} \left(x - \frac{1}{\beta_x}A^Ty\right) \smartqed
\end{align*}
Where the {\color{red} red} terms cancel, and the {\color{ao(english)} green} ones are free of $x'.$  
\end{proof}
Throughout the remainder of this section, we outline several properties of the self-centered smoothed gap, which will significantly contribute to justifying our subsequent findings. Additionally, we will call $\G(x^{\star}, y; x, y^{\star})$ and $\G(x, y^{\star}; x^{\star}, y)$ the \textit{outer-saddle} and the \textit{inner-saddle} smoothed gaps, respectively.
\begin{lemma} \label{lem:p and fermat}
    Given $\beta = (\beta_x, \beta_y) \in [0, +\infty[^2$, then the proximal point, $p$, defined in (\ref{eqn:p}) satisfies: 
    \begin{equation*} \label{eqn:p and fermat}
        p = \prox_{\beta_x^{-1}f} \left(x - \frac{1}{\beta_x}A^Ty\right) \iff \beta_x(x - p) \in \partial f(p) + A^Ty 
    \end{equation*}
\end{lemma}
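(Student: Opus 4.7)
The plan is to prove the stated equivalence as a direct application of Fermat's rule to the variational characterization of the proximal operator; since the objective being minimized is convex (thanks to $f \in \Gamma_0(\X)$), Fermat's rule gives an if-and-only-if condition, which is exactly what the lemma asks for.

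First I would expand the definition of the proximal operator with step size $s = \beta_x^{-1}$:
\begin{equation*}
p = \prox_{\beta_x^{-1}f}\left(x - \frac{1}{\beta_x}A^Ty\right) \iff p \in \arg\min_{x' \in \X} f(x') + \frac{\beta_x}{2} \left\| x' - \left(x - \frac{1}{\beta_x} A^T y\right)\right\|^2.
\end{equation*}
The objective on the right is the sum of a proper convex lower semi-continuous function $f$ and a smooth convex quadratic, so it lies in $\Gamma_0(\X)$ and Fermat's rule applies as a necessary and sufficient condition for a global minimizer.

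Next I would apply Fermat's rule. Since the quadratic term is differentiable, its gradient at $p$ is $\beta_x\bigl(p - x + \tfrac{1}{\beta_x} A^Ty\bigr) = \beta_x(p - x) + A^Ty$. The sub-differential of the whole objective at $p$ is therefore $\partial f(p) + \beta_x(p - x) + A^Ty$, and the optimality condition $0 \in \partial f(p) + \beta_x(p - x) + A^Ty$ rearranges to
\begin{equation*}
\beta_x(x - p) \in \partial f(p) + A^Ty,
\end{equation*}
which is the right-hand side of the claimed equivalence.

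There is really no main obstacle here: the only care needed is to keep track of the step-size normalization (so that the quadratic coefficient is $\beta_x/2$, yielding a gradient proportional to $\beta_x$) and to note that because the objective is convex, Fermat's rule gives an equivalence rather than just an implication, so both directions of the ``$\iff$'' come from the same argument read forwards and backwards.
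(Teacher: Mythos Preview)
Your proof is correct and follows exactly the approach the paper takes: the paper's own proof is the single line ``Direct implication of Fermat's rule,'' and you have simply spelled out that implication carefully.
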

\begin{proof}
 Direct implication of Fermat's rule.
\end{proof}
\begin{lemma} \label{lem:G lower bound}
    Given $\beta = (\beta_x, \beta_y) \in [0, +\infty[^2$, then for all $z \in \Z$ the self-centered smoothed gap satisfies: 
    \begin{equation*} \label{eqn:G lower bound}
        \G(z) \geq \frac{\beta_x}{2} \|x - p \|^2 + \frac{1}{2\beta_y} \|Ax - b\|^2 
    \end{equation*}
\end{lemma}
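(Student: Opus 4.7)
The plan is to start from the closed-form expression of the self-centered smoothed gap provided by Proposition \ref{props:SDG},
$$\G(z) = f(x) - f(p) + \langle A(x-p), y\rangle - \frac{\beta_x}{2}\|p-x\|^2 + \frac{1}{2\beta_y}\|Ax-b\|^2,$$
and simply peel off the already-present quadratic term $\tfrac{1}{2\beta_y}\|Ax-b\|^2$, which appears identically on both sides of the desired inequality. What remains to prove is therefore
$$f(x) - f(p) + \langle A(x-p), y\rangle \;\geq\; \beta_x\|x-p\|^2,$$
because subtracting $\tfrac{\beta_x}{2}\|p-x\|^2$ from both sides of this last bound leaves precisely the missing $\tfrac{\beta_x}{2}\|x-p\|^2$ needed on the right.

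To establish this inequality I would use the variational description of $p$. By Lemma \ref{lem:p and fermat} one has $\beta_x(x-p)-A^T y \in \partial f(p)$, so there exists a subgradient $q \in \partial f(p)$ with $q = \beta_x(x-p) - A^T y$. Plugging $q$ into the convex subgradient inequality $f(x) \geq f(p) + \langle q, x-p\rangle$ yields
$$f(x) \geq f(p) + \beta_x\|x-p\|^2 - \langle A(x-p), y\rangle,$$
which rearranges to exactly the required estimate.

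An equivalent route, avoiding Lemma \ref{lem:p and fermat}, would apply Lemma \ref{lem:prox property} directly to $g = f$, step size $s = \beta_x^{-1}$, argument $u = x-\beta_x^{-1}A^T y$ and test point $v = x$; expanding $\|u-p\|^2$ and cancelling the common $\tfrac{1}{2\beta_x}\|A^T y\|^2$ contribution on each side reproduces the same bound. I do not anticipate any real obstacle: the argument is essentially one line once one sees the reduction. The only care needed is bookkeeping of signs and cross-terms in the expansion of $\|x - p - \beta_x^{-1}A^T y\|^2$ if one uses the prox-inequality approach, or of the sign of $\langle A(x-p), y\rangle$ if one uses the subgradient approach.
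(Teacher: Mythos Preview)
Your proof is correct and essentially matches the paper's. The paper takes precisely your ``equivalent route'' via Lemma~\ref{lem:prox property} with $v=x$, while your primary argument via Lemma~\ref{lem:p and fermat} and the subgradient inequality is a slightly cleaner variant of the same idea (it avoids expanding $\|p-(x-\beta_x^{-1}A^Ty)\|^2$); both extract the same bound $f(x)-f(p)+\langle A(x-p),y\rangle\geq\beta_x\|x-p\|^2$ from the optimality of $p$.
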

\begin{proof}
By Lemma \ref{lem:prox property}, we know that: 
\begin{multline*}
    p =  \prox_{\beta_x^{-1}f} \left(x - \frac{1}{\beta_x}A^Ty\right) \iff \forall v \in \X, \\ \frac{1}{\beta_x} f(p) + \frac{1}{2} \left\|p - \left(x - \frac{1}{\beta_x}A^Ty\right)\right\|^2 \leq \frac{1}{\beta_x} f(v) + \frac{1}{2} \left\|v - \left(x - \frac{1}{\beta_x}A^Ty\right)\right\|^2 - \frac{1}{2} \|v - p\|^2
\end{multline*}
Taking $v = x$, we obtain: 
\begin{equation} \label{eqn:prox property, v = x}
    f(x) - f(p) - \frac{\beta_x}{2} \|x - p\|^2 \geq \frac{\beta_x}{2} \left\|p - \left(x - \frac{1}{\beta_x}A^Ty\right)\right\|^2 - \frac{1}{2\beta_x}\left\|A^Ty\right\|^2
\end{equation}
Thus, 
\begin{align*}
    \G(z) &= {\color{red} f(x) - f(p) - \frac{\beta_x}{2} \| p - x\|^2} + \langle A(x-p), y \rangle + \frac{1}{2\beta_y} \|Ax- b\|^2 \\ 
    & \stackrel{(\ref{eqn:prox property, v = x})}{\geq} \begin{multlined}[t] {\color{red} \frac{\beta_x}{2}\left\|p - \left(x- \frac{1}{\beta_x}A^Ty\right)\right\|^2 - \frac{1}{2\beta_x}\left\|A^Ty\right\|^2} + \langle A(x - p), y \rangle \\ + \frac{1}{2\beta_y}\|Ax - b\|^2 \end{multlined} \\ 
    & = \frac{\beta_x}{2} \|x - p\|^2 + \frac{1}{2\beta_y} \|Ax - b\|^2  \smartqed
\end{align*}
\end{proof}

\begin{corollary} \label{coro:FG-SDG}
    Given $\beta = (\beta_x, \beta_y) \in [0, +\infty]^2$. Then, for any $z \in \Z$, the \acrlong{fe} defined in (\ref{eqn:OGFG}) could be approximated in terms of the self-centered \acrlong{sdg} defined in (\ref{eqn:SDG}) for (\ref{Saddle pt. prob.}). More precisely, for all $z \in \Z$, 
    \begin{equation} \label{eqn:FG-SDG}
        \|Ax - b\| \leq \sqrt{2\beta_y \G(z)}
    \end{equation}
\end{corollary}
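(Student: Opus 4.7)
The plan is to deduce this corollary directly from Lemma \ref{lem:G lower bound}, which was just established. That lemma already gives the lower bound
\[
\G(z) \geq \frac{\beta_x}{2} \|x - p \|^2 + \frac{1}{2\beta_y} \|Ax - b\|^2,
\]
so essentially all the work is done and the corollary is a one-line consequence.

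First I would observe that the term $\frac{\beta_x}{2}\|x - p\|^2$ is non-negative, since it is a (scaled) squared norm and $\beta_x \in [0, +\infty[$ by assumption. Dropping this non-negative summand from the right-hand side only weakens the inequality, yielding
\[
\G(z) \geq \frac{1}{2\beta_y}\|Ax - b\|^2.
\]
Then I would multiply both sides by $2\beta_y > 0$ and take square roots to arrive at $\|Ax - b\| \leq \sqrt{2\beta_y\, \G(z)}$, which is exactly the claim.

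There is really no obstacle here; the only subtlety worth mentioning explicitly is that the corollary's statement allows $\beta \in [0, +\infty]^2$ whereas Lemma \ref{lem:G lower bound} is stated for $\beta \in [0, +\infty[^2$, and we must also have $\beta_y > 0$ for the right-hand side to be well-defined (if $\beta_y = 0$, the bound is vacuous unless $\|Ax-b\| = 0$, and if $\beta_y = +\infty$ the inequality is trivially satisfied by the convention that $\sqrt{2\beta_y \G(z)} = +\infty$ whenever $\G(z) > 0$). I would note this briefly so that the extended range in the hypothesis is properly handled, and then conclude.
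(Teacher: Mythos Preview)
Your proof is correct and follows exactly the paper's own approach: the paper simply writes ``Direct implication of Lemma \ref{lem:G lower bound},'' which is precisely what you do. Your additional remarks about the endpoint cases $\beta_y \in \{0, +\infty\}$ are a nice clarification that the paper itself omits.
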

\begin{proof}
Direct implication of Lemma \ref{lem:G lower bound}.   
\end{proof}
\begin{lemma} \label{lem:bound G}
Given $\beta = (\beta_x, \beta_y) \in [0, +\infty]^2$, then for all $z \in \Z$, the self-centered \acrlong{sdg} satisfies: 
    \begin{align}
        \G(z) &\geq \frac{\beta_x}{2} \|x_{\beta}(y) - x\|^2 + \frac{\beta_y}{2} \|y_{\beta}(x) - y\|^2 \label{eqn:bound G}\\ 
        x_{\beta}(y) &:= \arg\max_{x'} - f(x') - \left\langle Ax' - b, y \right\rangle - \frac{\beta_x}{2} \|x' -x\|^2 \\ 
        y_{\beta}(x) &:= \arg\max_{y'} \left\langle Ax - b, y' \right\rangle - \frac{\beta_y}{2} \|y' -y\|^2 
    \end{align}
\end{lemma}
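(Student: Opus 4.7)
The plan is to exploit the separability of the inner supremum in the definition of $\G(z)$ and then apply a strong-concavity argument on each piece. Starting from Definition \ref{def:SDG general} with $\dot z = z$, the expression $\Lag(x,y') - \Lag(x',y) - \frac{\beta_x}{2}\|x'-x\|^2 - \frac{\beta_y}{2}\|y'-y\|^2$ decomposes into an $x'$-only part and a $y'$-only part (after isolating the $f(x)$ and $\langle Ax - b, y\rangle$ terms that depend on neither $x'$ nor $y'$). So I can write
\begin{equation*}
\G(z) = f(x) + \langle Ax - b, y\rangle + \sup_{x'} \bigl[-f(x') - \langle Ax' - b, y\rangle - \tfrac{\beta_x}{2}\|x'-x\|^2\bigr] + \sup_{y'} \bigl[\langle Ax-b, y' - y\rangle - \tfrac{\beta_y}{2}\|y'-y\|^2\bigr].
\end{equation*}

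Next I would handle the $y'$-sup exactly. This is a concave quadratic in $y'$ with maximizer precisely $y_\beta(x) = y + \beta_y^{-1}(Ax-b)$, so the optimal value equals $\frac{1}{2\beta_y}\|Ax-b\|^2$. Crucially, since $y_\beta(x) - y = \beta_y^{-1}(Ax-b)$, this value rewrites as $\frac{\beta_y}{2}\|y_\beta(x) - y\|^2$, which is exactly the second term we want.

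For the $x'$-sup, let $g(x') := -f(x') - \langle Ax' - b, y\rangle - \frac{\beta_x}{2}\|x'-x\|^2$. Since $-f$ is concave, the linear term is concave, and $-\frac{\beta_x}{2}\|x'-x\|^2$ is $\beta_x$-strongly concave, $g$ is $\beta_x$-strongly concave, with maximizer $x_\beta(y) = p$ by definition. Applying Lemma \ref{lem:charac for str-cvx} to $-g$ at $p$ (using $0 \in \partial(-g)(p)$), we get
\begin{equation*}
g(p) \geq g(x) + \tfrac{\beta_x}{2}\|p - x\|^2 = -f(x) - \langle Ax - b, y\rangle + \tfrac{\beta_x}{2}\|x_\beta(y) - x\|^2.
\end{equation*}

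Finally, I would plug both bounds back into the decomposition. The terms $f(x)$, $-f(x)$, $\langle Ax-b, y\rangle$, and $-\langle Ax-b, y\rangle$ cancel, and I am left precisely with $\G(z) \geq \frac{\beta_x}{2}\|x_\beta(y) - x\|^2 + \frac{\beta_y}{2}\|y_\beta(x) - y\|^2$. No serious obstacle is expected; the only subtlety is bookkeeping the $\langle Ax - b, y\rangle$ terms in the additive split and recognizing the identity $\frac{1}{2\beta_y}\|Ax-b\|^2 = \frac{\beta_y}{2}\|y_\beta(x) - y\|^2$, which is what converts the closed-form value of the $y'$-sup into the form stated in the lemma.
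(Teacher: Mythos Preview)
Your proof is correct and follows essentially the same route as the paper: decompose $\G(z)$ into the two separate suprema and use $\beta_x$-strong concavity (via Lemma~\ref{lem:charac for str-cvx}) on the $x'$-part to compare the optimal value against evaluation at $x'=x$. The only cosmetic difference is that the paper applies the strong-concavity lemma to \emph{both} pieces, whereas you compute the $y'$-supremum in closed form and then recognize the identity $\tfrac{1}{2\beta_y}\|Ax-b\|^2=\tfrac{\beta_y}{2}\|y_\beta(x)-y\|^2$; these are equivalent since for the purely quadratic $y'$-objective the strong-concavity inequality is an equality.
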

\begin{proof}
\begin{align*}
    \G(z) &= \begin{multlined}[t] f(x) + {\color{red} \sup_{x'} - f(x') - \left\langle Ax' - b, y \right \rangle - \frac{\beta_x}{2} \|x' - x\|^2 } + {\color{blue} \sup_{y'} \left\langle Ax - b, y' \right\rangle } \\ {\color{blue} - \frac{\beta_y}{2} \|y' - y\|^2} \end{multlined} \\ 
    &= f(x) + {\color{red} F_1(x_{\beta}(y))} + {\color{blue} F_2(y_{\beta}(x))} 
\end{align*}
Where 
\begin{align*}
    F_1(\mu) &=  - f(\mu) - \left\langle A\mu - b, y \right \rangle - \frac{\beta_x}{2} \|\mu - x\|^2 \\
    F_2(\nu) &=  \left\langle Ax - b, \nu \right\rangle - \frac{\beta_y}{2} \|\nu - y\|^2
\end{align*}
One can observe that $F_1$ and $F_2$ are $\frac{\beta_x}{2}$ and $\frac{\beta_y}{2}-$strongly concave, respectively. Hence, by Lemma \ref{lem:charac for str-cvx}: 
\begin{equation} \label{eqn: F is str. concave}
\begin{split}
F_1(x_{\beta}(y)) &\geq F_1(x) +  \frac{\beta_x}{2}\| x_{\beta}(y) - x\|^2 \\
F_2(y_{\beta}(x)) &\geq F_2(y) +  \frac{\beta_y}{2}\| y_{\beta}(x) - y\|^2
\end{split}
\end{equation}
Thus, 
\begin{align*}
    \G(z) &= f(x) + {\color{red} F_1(x_{\beta}(y))} + {\color{blue} F_2(y_{\beta}(x))} \\
    & \stackrel{(\ref{eqn: F is str. concave})}{\geq} \begin{multlined}[t] f(x) {\color{red} - f(x) - \left\langle Ax - b, y \right\rangle + \frac{\beta_x}{2}\|x_{\beta}(y) - x\|^2} {\color{blue} +\left\langle Ax - b, y \right\rangle } {\color{blue} + \frac{\beta_y}{2}\|y_{\beta}(x) - y\|^2}\end{multlined} \\ 
    & = \frac{\beta_x}{2}\|x_{\beta}(y) - x\|^2 + \frac{\beta_y}{2}\|y_{\beta}(x) - y\|^2 \smartqed 
\end{align*}
\end{proof}
\begin{corollary} \label{coro:bound G(*, .;.,*)}
Given $\beta = (\beta_x, \beta_y) \in [0, +\infty]^2, z \in \Z$, and $z^{\star} \in \Z^{\star}$, then the outer-saddle smoothed gap satisfies: 
\begin{align*}
    \G(x^{\star}, y ; x, y^{\star}) &\geq -\frac{\beta_x}{2} \|x - x^{\star}\|^2 + \frac{\beta_x}{2} \|x_{\beta}(y) - x^{\star}\|^2  \\ 
    x_{\beta}(y) &= \arg\max_{x'} -f(x') - \left\langle Ax' - b, y \right\rangle - \frac{\beta_x}{2} \|x' - x\|^2
\end{align*}
\end{corollary}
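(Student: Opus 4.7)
The plan is to unroll the definition of the smoothed gap at the centered point $(x, y^\star)$ and evaluated point $(x^\star, y)$, then separate the supremum into independent suprema over $x'$ and $y'$. By the general definition (\ref{eqn:SDG general}),
\begin{equation*}
\G(x^\star, y; x, y^\star) = \sup_{x'} \Bigl[-f(x') - \langle Ax' - b, y\rangle - \tfrac{\beta_x}{2}\|x' - x\|^2\Bigr] + f(x^\star) + \sup_{y'}\Bigl[\langle Ax^\star - b, y'\rangle - \tfrac{\beta_y}{2}\|y' - y^\star\|^2\Bigr].
\end{equation*}

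The $y'$-supremum is the easy part: since $z^\star \in \Z^\star$ entails $Ax^\star = b$, the linear term vanishes, and the supremum is attained at $y' = y^\star$ giving value $0$. So the whole expression collapses to $f(x^\star) + F_1(x_\beta(y))$, where I reuse the notation $F_1$ and the maximizer $x_\beta(y)$ introduced in Lemma \ref{lem:bound G}.

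Next I would apply the strong concavity characterization (Lemma \ref{lem:charac for str-cvx}, which also applies to concave maximizers) to $F_1$, which is $\beta_x$-strongly concave. Since $x_\beta(y)$ is its global maximizer, evaluating at the particular point $x^\star$ yields
\begin{equation*}
F_1(x_\beta(y)) \geq F_1(x^\star) + \tfrac{\beta_x}{2} \|x_\beta(y) - x^\star\|^2.
\end{equation*}
Expanding $F_1(x^\star) = -f(x^\star) - \langle Ax^\star - b, y\rangle - \tfrac{\beta_x}{2}\|x^\star - x\|^2$ and again using $Ax^\star = b$ cancels the cross term, and combining with the leading $f(x^\star)$ leaves exactly $-\tfrac{\beta_x}{2}\|x - x^\star\|^2 + \tfrac{\beta_x}{2}\|x_\beta(y) - x^\star\|^2$.

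There is no real obstacle here; the only thing to be careful about is tracking which argument is the center and which is the evaluation point in the generic $\G(\cdot;\cdot)$ notation, and exploiting that primal feasibility of $x^\star$ kills both the $y'$-supremum and the cross term generated when plugging $x^\star$ into $F_1$. This is essentially the same argument as Lemma \ref{lem:bound G}, specialized to the outer-saddle configuration, so the proof can simply invoke the strong-concavity inequality (\ref{eqn: F is str. concave}) at $\mu = x^\star$ instead of $\mu = x$.
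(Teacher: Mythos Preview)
Your proposal is correct and follows essentially the same approach as the paper's own proof: separate the two suprema, kill the $y'$-part via $Ax^\star = b$, then apply the strong-concavity inequality for $F_1$ at the point $x^\star$ (instead of at $x$ as in Lemma~\ref{lem:bound G}) and use $Ax^\star = b$ once more to cancel the cross term. There is nothing to add.
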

\begin{proof}
\begin{align*}
    \G(x^{\star}, y ; x, y^{\star}) &=  \begin{multlined}[t] f(x^{\star}) + {\color{red} \sup_{x'} \left(- f(x') - \left\langle Ax' - b, y \right \rangle - \frac{\beta_x}{2} \|x' - x\|^2 \right)} \\ + {\color{blue} \sup_{y'} - \frac{\beta_y}{2} \|y' - y^{\star}\|^2} \end{multlined}\\ 
    &= \begin{multlined}[t] f(x^{\star}) {\color{red} - f(x_{\beta}(y)) - \left\langle Ax_{\beta}(y) - b, y \right \rangle - \frac{\beta_x}{2} \|x_{\beta}(y) - x\|^2 } + {\color{blue} 0 } \end{multlined} \\ 
    &= f(x^{\star}) + {\color{red} F(x_{\beta}(y))}
\end{align*}
Where 
\begin{equation*}
    F(\mu) =  - f(\mu) - \left\langle A\mu - b, y \right \rangle - \frac{\beta_x}{2} \|\mu - x\|^2 \\
\end{equation*}
Since $F$ is $\frac{\beta_x}{2}-$Strongly-concave and its maximum is attained at $x_{\beta}(y)$, then by Lemma \ref{lem:charac for str-cvx}: 
\begin{equation*}
 F(x_{\beta}(y)) \geq F(x^{\star}) +  \frac{\beta_x}{2}\| x_{\beta}(y) - x^{\star}\|^2
\end{equation*}
Therefore, 
\begin{align*}
    \G(x^{\star}, y ; x, y^{\star}) &\geq f(x^{\star}) + F(x^{\star}) + \frac{\beta_x}{2} \|x_{\beta}(y) - x^{\star} \|^2 \\ 
    &= \begin{multlined}[t] f(x^{\star}) {\color{red} - f(x^{\star}) - \left\langle Ax^{\star} - b, y \right\rangle - \frac{\beta_x}{2} \|x^{\star} - x \|^2 + \frac{\beta_x}{2}\|x_{\beta}(y) - x\|^2} \end{multlined} \\ 
    & = - \frac{\beta_x}{2}\|x - x^{\star}\|^2 + \frac{\beta_x}{2}\|x_{\beta}(y) - x\|^2 \smartqed 
\end{align*}
\end{proof}
\begin{lemma} \label{lem:G decomposition} Let $z^{\star} \in \Z^{\star}$, then for any $z \in \Z$, the self-centered \acrlong{sdg} can be decomposed in terms of the outer and inner saddle smoothed gaps as: 
    \begin{equation*} \label{eqn:G decomposition}
        \G(z) = \G(x, y^{\star} ; x^{\star}, y) + \G(x^{\star}, y ; x, y^{\star})
    \end{equation*}
\end{lemma}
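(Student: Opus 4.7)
The plan is to expand each of the three smoothed gaps using Definition \ref{def:SDG general} and exploit the bilinear structure of the Lagrangian $\Lag(x,y)=f(x)+\langle Ax-b, y\rangle$. The key observation is that in $\Lag(x, y') - \Lag(x', y)$, the variable $x'$ appears only inside $-f(x')-\langle Ax', y\rangle$ (coupled with $y$, not $y'$) and $y'$ appears only inside $\langle Ax-b, y'\rangle$ (coupled with $x$, not $x'$). Hence the joint supremum over $(x', y')\in\Z$ in the smoothed gap separates into a sum of two independent suprema.

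First, I would apply this separation to write, for any centering $\dot z$,
\begin{equation*}
\G(z;\dot z) = \Big(\sup_{y'}\Lag(\bar x, y')-\tfrac{\beta_y}{2}\|y'-\dot y\|^2\Big) + \Big(\sup_{x'}-\Lag(x', \bar y)-\tfrac{\beta_x}{2}\|x'-\dot x\|^2\Big),
\end{equation*}
where $\bar x$ and $\bar y$ denote the first primal and first dual arguments of $\G$, respectively. Then I would specialize this identity to $\G(z)=\G(x,y;x,y)$, to $\G(x^\star,y;x,y^\star)$, and to $\G(x,y^\star;x^\star,y)$.

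Next, I would add the outer- and inner-saddle expressions. The sup over $y'$ centered at $y$ of $\Lag(x,y')-\tfrac{\beta_y}{2}\|y'-y\|^2$ from the inner-saddle gap and the sup over $x'$ centered at $x$ of $-\Lag(x',y)-\tfrac{\beta_x}{2}\|x'-x\|^2$ from the outer-saddle gap reconstruct exactly $\G(z)$. The two remaining terms are a sup over $y'$ centered at $y^\star$ of $\Lag(x^\star,y')-\tfrac{\beta_y}{2}\|y'-y^\star\|^2$ and a sup over $x'$ centered at $x^\star$ of $-\Lag(x',y^\star)-\tfrac{\beta_x}{2}\|x'-x^\star\|^2$. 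Applying the separation identity in reverse, their sum equals $\G(z^\star)$.

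Finally, since $z^\star\in\Z^\star$, the theorem on the self-centered smoothed gap cited from \cite{QEB_fercoq} gives $\G(z^\star)=0$, which yields the claimed decomposition. The only delicate step is justifying the separation of the joint supremum; this is not really an obstacle because the bilinear structure of $\Lag$ makes the objective additively separable in $(x', y')$, but it is worth writing out carefully so that the cancellation between the four pieces is transparent.
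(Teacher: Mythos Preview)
Your proof is correct and follows essentially the same approach as the paper: both exploit the bilinear structure of $\Lag$ to split each smoothed gap into two independent suprema, then match terms. The only cosmetic difference is that the paper evaluates the two leftover suprema directly (using $Ax^\star=b$ and the stationarity of $x^\star$ to get $0$ and $-f(x^\star)$, which then cancel with $f(x^\star)$), whereas you bundle them into $\G(z^\star)$ and invoke the cited result $\G(z^\star)=0$; the underlying computation is the same.
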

\begin{proof}
 By using the definition of SDG twice, first: 
\begin{align*}
    \G(x, y^{\star} ; x^{\star}, y) &= \begin{multlined}[t] f(x) + {\color{red} \sup_{x'} - f(x') - \left\langle Ax' -b, y^{\star} \right\rangle -\frac{\beta_x}{2} \|x' - x^{\star} \|^2} \\  + \sup_{y'} \left\langle Ax - b, y' \right\rangle -\frac{\beta_y}{2} \|y' - y\|^2 \end{multlined} \\
    &= f(x) {\color{red} - f(x^{\star})} + \sup_{y'} \left\langle Ax - b, y' \right\rangle -\frac{\beta_y}{2} \|y' - y\|^2
\end{align*}
Second:
\begin{align*}
    \G(x^{\star}, y ; x, y^{\star}) &=  \begin{multlined}[t] f(x^{\star}) + \sup_{x'} - f(x') - \left\langle Ax' - b, y \right\rangle - \frac{\beta_x}{2} \|x' - x\|^2  \\ + {\color{blue} \sup_{y'} - \frac{\beta_y}{2} \|y' - y^{\star} \|^2} \end{multlined} \\ 
    &=  \begin{multlined}[t] f(x^{\star}) + \sup_{x'} - f(x') - \left\langle Ax' - b, y \right\rangle - \frac{\beta_x}{2} \|x' - x\|^2 + {\color{blue} 0 } \end{multlined}  \smartqed
\end{align*}
Summing the two terms implies the result.  
\end{proof}
\begin{lemma} \label{lem: G(*, .;.,*) - sqrt eta}
Given $\beta = (\beta_x, \beta_y) \in [0, +\infty]^2, z \in \Z$, and $z^{\star} \in \Z^{\star}$, then the outer-saddle smoothed gap satisfies: 
    \begin{equation*} \label{eqn:G(*, .;.,*) - sqrt eta}
        \G(x^{\star}, y ; x, y^{\star}) \geq -2\sqrt{\beta_x \G(z)}\|x - x^{\star}\|
    \end{equation*}
\end{lemma}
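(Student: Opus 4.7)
The plan is to combine Corollary \ref{coro:bound G(*, .;.,*)}, which already bounds the outer–saddle smoothed gap from below by a difference of squared distances to $x^{\star}$, with Lemma \ref{lem:G lower bound}, which controls how close the auxiliary point is to $x$ in terms of $\G(z)$. Starting from the corollary,
$$\G(x^{\star}, y; x, y^{\star}) \geq \tfrac{\beta_x}{2}\bigl(\|x_{\beta}(y) - x^{\star}\|^2 - \|x - x^{\star}\|^2\bigr),$$
the task reduces to turning a difference of two squared distances into the product of $\|x-x^{\star}\|$ with a quantity controlled by $\sqrt{\G(z)}$.

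The central algebraic step is the identity $\|v\|^2 - \|u\|^2 = 2\langle u, v-u\rangle + \|v-u\|^2$, which I would apply with $u = x - x^{\star}$ and $v = x_{\beta}(y) - x^{\star}$, so that $v - u = x_{\beta}(y) - x$. Dropping the non-negative $\|v-u\|^2$ term and applying Cauchy–Schwarz to the inner product gives
$$\|x_{\beta}(y) - x^{\star}\|^2 - \|x - x^{\star}\|^2 \geq -2\,\|x - x^{\star}\|\,\|x_{\beta}(y) - x\|.$$

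To finish, I would identify the point $p$ from Lemma \ref{lem:G lower bound} with the maximizer $x_{\beta}(y)$ from Lemma \ref{lem:bound G}: both equal $\prox_{\beta_x^{-1}f}\bigl(x - \beta_x^{-1} A^T y\bigr)$ after completing the square in the defining $\arg\max$. Lemma \ref{lem:G lower bound} then yields $\|x_{\beta}(y) - x\| \leq \sqrt{2\G(z)/\beta_x}$, and chaining the estimates produces
$$\G(x^{\star}, y; x, y^{\star}) \geq -\beta_x\,\|x - x^{\star}\|\sqrt{\tfrac{2\G(z)}{\beta_x}} = -\sqrt{2\beta_x\,\G(z)}\;\|x - x^{\star}\|,$$
which is in fact strictly sharper than the stated bound, since $\sqrt{2}\leq 2$. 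No serious obstacle is expected; the only subtleties are noting the identification $p = x_{\beta}(y)$ and using $Ax^{\star}=b$ inside the corollary so that the $y$-maximization contributes zero.
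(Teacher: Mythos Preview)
Your proposal is correct and follows essentially the same route as the paper: start from Corollary~\ref{coro:bound G(*, .;.,*)}, expand the difference of squares, drop the non-negative term, apply Cauchy--Schwarz, and bound $\|x_{\beta}(y)-x\|$ by $\sqrt{2\G(z)/\beta_x}$. The only cosmetic difference is that the paper invokes Lemma~\ref{lem:bound G} directly (which is already phrased in terms of $x_{\beta}(y)$), whereas you go through Lemma~\ref{lem:G lower bound} together with the identification $p=x_{\beta}(y)$; both yield the same estimate, and your observation that the argument actually gives the sharper constant $\sqrt{2}$ rather than $2$ matches exactly what the paper's own proof produces.
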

\begin{proof}
 By corollary \ref{coro:bound G(*, .;.,*)}, we know that: 
\begin{align*}
    \G(x^{\star}, y ; x, y^{\star}) &\geq  -\frac{\beta_x}{2} \|x - x^{\star}\|^2 + \frac{\beta_x}{2} \|x_{\beta}(y) {\color{blue} - x + x } - x^{\star}\|^2 \\
    &= \frac{\beta_x}{2} \|x_{\beta}(y) - x\|^2 + \beta_x \left\langle x_{\beta}(y) - x, x - x^{\star} \right\rangle \\ 
    &\geq \beta_x \left\langle x_{\beta}(y) - x, x - x^{\star} \right\rangle \\
    &\stackrel{(CS)}{\geq} -\beta_x \|x_{\beta}(y) - x\| \|x - x^{\star}\| \\ 
    &\stackrel{(\ref{eqn:bound G})}{\geq} -\sqrt{2\beta_x \G(z)} \|x - x^{\star}\|   \smartqed 
\end{align*}
\end{proof}
This last lemma will play a crucial role in establishing an upper bound for the \acrlong{og} in terms of the \acrlong{sdg} as we will elucidate in the subsequent section.

\section{Optimality Gap Bounds} \label{sec:OG bounds}

In our earlier discussion regarding the definition of an $\varepsilon$-solution, a solution is considered as an $\varepsilon$-solution if both the \acrlong{og} and the \acrlong{fe} are below $\varepsilon$. While the computation of the \acrlong{fe} is straightforward, the same cannot be said for the \acrlong{og}. Consequently, determining whether the \acrlong{og} is indeed less than $\varepsilon$ or not poses a more complicated challenge.

This section aims to establish computable approximations for the uncomputable measure, \textit{\acrlong{og}}, by setting upper bounds in terms of the aforementioned computable ones: the \acrshort{kkt}, \acrshort{pdg}, and \acrshort{sdg}. The initial approach involves attempting to set an upper bound on the optimality gap in terms of the \acrshort{kkt} defined in equation (\ref{eqn:KKT error}). That is: 
\begin{equation*}
    \Op(x) = f(x) - f^{\star} \stackrel{?}{\leq} \W(\K(z))
\end{equation*}
For instance, this $\W$ could be $\K^2$ or $c\K$ for any scalar $c$, and so forth. Consequently, if $\W(\K(z)) \leq \varepsilon$, it follows that $\Op(x) \leq \varepsilon$ as well. Therefore, we will attain an $\varepsilon_{g}-$solution with $\varepsilon \leq \varepsilon_g$, depending on the tightness of our subsequent bounds. 

One possible starting point is by taking a vector that satisfies the stationarity property of the Lagrangian:
\begin{align*}
    q \in \partial f(x) + A^Ty &\iff \forall u \in \X, ~ f(u) \geq f(x) + \langle q - A^Ty, u - x \rangle  \\ 
    &~ \stackrel{u = x^{\star}}{\Longrightarrow} ~ f(x) - f(x^{\star}) \leq \langle q - A^Ty, x - x^{\star} \rangle \\ 
    & \iff f(x) - f(x^{\star}) \leq \langle q, x - x^{\star} \rangle + \langle -y, A(x - x^{\star}) \rangle \\
    & \stackrel{Ax^{\star} = b}{\iff} f(x) - f(x^{\star}) \leq \underbrace{\|q\| {\color{blue}  \|x - x^{\star}\|} + \|y\| \|Ax - b\|}_{\text{Initial bound}} \stepcounter{equation}\tag{\theequation}\label{eqn:initial bound kkt}
\end{align*}
Yet, our efforts have resulted in an expression that remains depending on the unknown quantity, $x^{\star}$. This implies that unless we can eliminate this problematic term, our progress will not surpass the limitations of the optimality gap itself. Furthermore, when we attempted a similar approach with the \textit{\acrlong{pdg}} and the \textit{\acrlong{sdg}}, we encountered an identical issue:
\begin{itemize}
    \item Projected duality gap
        \begin{equation*}
        f(x) - f^{\star} \leq |f(x) + f^*(a) + \langle b, y \rangle | + \|x^{\star}\|\left\|a + A^Ty\right\| \leq \left(1 + {\color{blue} \|x^{\star}\|}\right) \sqrt{\D(z)}
        \end{equation*}
    \item Smoothed duality gap
        \begin{align*}
        f(x) - f^{\star} &= \begin{multlined}[t] f(x) + \left\langle Ax - b, y \right\rangle + \frac{1}{2\beta_y} \left\|Ax - b\right\|^2 - f^{\star}  - \left\langle Ax - b, y \right\rangle \\ - \frac{1}{2\beta_y} \left\|Ax - b\right\|^2 \end{multlined} \\ 
        &= \G(z) {\color{red} - \G(x^{\star}, y ; x, y^{\star})} - \left\langle Ax - b, y \right\rangle - \frac{1}{2\beta_y} \left\|Ax - b \right\|^2 \\
        &\leq \G(z) {\color{red} + \sqrt{2\beta_x\G(z)}} {\color{blue} \|x - x^{\star}\|} - \dots \dots 
        \end{align*} 
\end{itemize}
In each scenario, we continuously encountered the presence of the blue annoying unknown term. Nevertheless, we successfully transformed the initial bounds of the optimality gap into final ones by introducing additional assumptions that effectively eliminate the aforementioned blue annoying term.

Just to remark, more detailed proofs of each step of what we did in our initial bounds will be presented later on in this section.

\subsection{Regularity assumptions} 
In this subsection, we present the regularity assumptions that we employ to eliminate the blue annoying term from our initial bounds of the optimality gap. The first assumption is the \textit{metric sub-regularity} of the sub-differential of the Lagrangian, it was first introduced in the works of \cite{MSR}. The approach involves applying the definition of metric sub-regularity \textit{(Definition \ref{def:MSR})} to the sub-differential of the Lagrangian of (\ref{Saddle pt. prob.}) ($i.e. ~ F = \partial \Lag$ and $v = 0$). Formally, this can be expressed as follows:
\begin{assumption}[\acrshort{MSRSDL}, \cite{MSR}] \label{assump:MSR}

 The \acrfull{msrsdl} assumes that: there exists $\gamma > 0$ such that 
    \begin{equation}  \label{eqn:MSR}
        \left\|\partial_x \Lag(x, y)\right\|_0 + \left\|\nabla_y \Lag(x, y)\right\| \geq \gamma \dist(x, \X^{\star}) + \gamma \dist(y, \Y^{\star})
    \end{equation}
\end{assumption} 
As we can observe, assuming the metric sub-regularity of the sub-differential of the Lagrangian provides an upper bound for the earlier identified blue annoying term in terms of the KKT error.

The second assumption we introduce is referred to as the \textit{quadratic error bound of the smoothed gap}. While the \textit{quadratic error bound} is a widely recognized and commonly used assumption in general contexts, its application to the smoothed duality gap is relatively recent, first introduced in \cite{QEB_fercoq}. It's as broadly applicable as the metric sub-regularity of the Lagrangian's sub-differential, and it also serves as an upper bound for the blue annoying term in terms of the smoothed duality gap.
\begin{assumption}[\acrshort{QEBSG}, \textit{Proposition 15 (iii)} in  \cite{QEB_fercoq}] \label{assump:QEBSG}
    
    The \acrfull{qebsg} assumes that: there exists $\beta = (\beta_x, \beta_y) \in ]0, +\infty]^2, \eta > 0$ and a region $\mathcal{R} \subseteq \Z$ such that $\G$ has a \textit{quadratic error bound} with constant $\eta$ in the region $\mathcal{R}$. Said otherwise, for all $z \in \mathcal{R}$: 
    \begin{equation} \label{eqn:QEBSG}
        \G(z) \geq \frac{\eta}{2} \dist\left(z, \Z^{\star}\right)^2 
    \end{equation}    
\end{assumption}
\subsection{Final bounds} \label{subsec:final bounds}
In the subsection, we present our final bounds of the \acrlong{og} by assuming the aforementioned regularity assumptions. 
\begin{theorem} \label{thm:OG-KKT}
    Let $f \in \Gamma_0(\X), x^{\star} = \proj_{\X^{\star}}(x)$, and $q = \left\|\partial f(x) + A^Ty\right\|_0$. Then, under Assumption \ref{assump:MSR}, the \acrlong{og} defined in (\ref{eqn:OGFG}) could be approximated in terms of the \acrlong{kkt} defined in (\ref{eqn:KKT error}) for (\ref{Saddle pt. prob.}). More precisely, for all $z \in \Z$, 
    \begin{equation} \label{eqn:OG-KKT}
        f(x) - f^{\star} \leq  \frac{2}{\gamma} \K(z) + \|y\| \sqrt{\K(z)} 
    \end{equation}
\end{theorem}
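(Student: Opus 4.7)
The plan is to extend the preliminary computation already sketched in equation (\ref{eqn:initial bound kkt}) and use Assumption \ref{assump:MSR} to dispose of the uncomputable factor $\|x-x^\star\|$ that appears there. First, I choose $q$ to be the minimum-norm element of $\partial f(x)+A^Ty$, so that $\|q\|=\|\partial f(x)+A^Ty\|_0$. Since $q-A^Ty\in\partial f(x)$, applying the subgradient inequality at $u=x^\star$, then Cauchy--Schwarz, and using $Ax^\star=b$ reproduces exactly (\ref{eqn:initial bound kkt}):
$$f(x)-f(x^\star)\leq \|q\|\,\|x-x^\star\|+\|y\|\,\|Ax-b\|.$$

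Next, I would invoke the MSRSDL hypothesis at the point $(x,y)$. Because $x^\star=\proj_{\X^\star}(x)$, we have $\|x-x^\star\|=\dist(x,\X^\star)$, and since the right-hand side of (\ref{eqn:MSR}) is only larger than $\gamma\,\dist(x,\X^\star)$, dropping the $\Y^\star$-distance term yields
$$\|q\|+\|Ax-b\|\;\geq\;\gamma\,\|x-x^\star\|,\qquad\text{i.e.}\qquad \|x-x^\star\|\;\leq\;\frac{\|q\|+\|Ax-b\|}{\gamma}.$$
Substituting this into the initial bound and distributing gives
$$f(x)-f(x^\star)\;\leq\;\frac{\|q\|^2+\|q\|\,\|Ax-b\|}{\gamma}+\|y\|\,\|Ax-b\|.$$

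To finish, I use the definition (\ref{eqn:KKT error}): by construction $\|q\|^2\leq\K(z)$ and $\|Ax-b\|^2\leq\K(z)$, so in particular $\|q\|\,\|Ax-b\|\leq\sqrt{\K(z)}\cdot\sqrt{\K(z)}=\K(z)$ and $\|Ax-b\|\leq\sqrt{\K(z)}$. Plugging these three bounds in delivers
$$f(x)-f(x^\star)\;\leq\;\frac{1}{\gamma}\bigl(\K(z)+\K(z)\bigr)+\|y\|\sqrt{\K(z)}\;=\;\frac{2}{\gamma}\K(z)+\|y\|\sqrt{\K(z)},$$
which is the claim. The only non-mechanical step is the invocation of MSRSDL to replace the uncomputable $\|x-x^\star\|$ by the computable quantity $\|q\|+\|Ax-b\|$; picking $q$ of minimum norm is what ensures that the resulting bound is expressed through $\K(z)$ rather than through an arbitrary subgradient. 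I do not anticipate any real obstacle: the subgradient inequality, MSRSDL, and two elementary majorizations of $\|q\|$ and $\|Ax-b\|$ by $\sqrt{\K(z)}$ suffice.
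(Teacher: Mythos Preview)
Your proposal is correct and follows essentially the same route as the paper's own proof: start from the subgradient bound (\ref{eqn:initial bound kkt}), apply Assumption~\ref{assump:MSR} to replace $\|x-x^\star\|$ by $\gamma^{-1}(\|q\|+\|Ax-b\|)$, and then majorize $\|q\|^2$, $\|q\|\,\|Ax-b\|$, and $\|Ax-b\|$ by $\K(z)$, $\K(z)$, and $\sqrt{\K(z)}$ respectively. The only cosmetic difference is that you make explicit from the outset that $q$ is chosen as the minimum-norm element of $\partial f(x)+A^Ty$, which the paper leaves slightly implicit.
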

\begin{proof}
 As we have seen in our initial bound (eqn:\ref{eqn:initial bound kkt}), taking $q \in \partial f(x) + A^Ty$ yields: 
\begin{align*}
    f(x) - f^{\star} &\leq \|q\| {\color{blue} \|x - x^{\star}\|} + \|y\| \|Ax - b\| \\
    &\stackrel{(\ref{eqn:MSR})}{\leq} \left\|\partial f(x) + A^Ty\right\|_0 {\color{blue} \frac{1}{\gamma}\left(\|\partial f(x) + A^Ty\|_0 + \|Ax - b\|\right)} + \|y\| \|Ax - b\| \\ 
    &= \frac{1}{\gamma}{\color{red} \left\| \partial f(x) + A^Ty \right\|^2_0} + \frac{1}{\gamma} {\color{red} \left\|\partial f(x) + A^Ty\right\|_0 } {\color{ao(english)} \|Ax - b\|} + \|y\| {\color{ao(english)} \|Ax - b\|} \\
    & \stackrel{(\ref{eqn:KKT error})}{\leq} \frac{1}{\gamma}{\color{red} \K(z)} + \frac{1}{\gamma} {\color{red} \sqrt{\K(z)}}{\color{ao(english)} \sqrt{\K(z)}} + \|y\| {\color{ao(english)} \sqrt{\K(z)}} \\ 
    &= \frac{2}{\gamma} \K(z) + \|y\| \sqrt{\K(z)} \smartqed
\end{align*}
\end{proof}
\begin{remark}
An interesting observation in this first finding pertains to the term $\left(\frac{2}{\gamma} \K(z)\right)$ that depends on $\gamma$. The metric sub-regularity constant, $\gamma$, typically takes very small values, such as $10^{-8}$, $10^{-10}$, or even smaller. So, having $\frac{1}{\gamma}$ multiplied with $\K(z)$ rather than $\sqrt{\K(z)}$ yields a tighter and more efficient bound where the algorithm will require fewer iterations to beat $\frac{1}{\gamma}$ before identifying an $\varepsilon-$solution.
\end{remark}
\begin{counterex}
In Theorem \ref{thm:OG-KKT}, we derived an upper bound for the \acrlong{og} based on the \acrshort{kkt}. It is important to note, however, that the reverse relationship may not always hold. The following counterexample illustrates this point:

Let $\varepsilon > 0$, and consider the unconstrained optimization problem $\displaystyle \min_{x \in \R} f(x)$ where  $f \colon \R \rightarrow \R$ is defined as: 
\begin{equation*}
    f(x) = \begin{cases}
        x &\text{If}~ x > \varepsilon \\
        \frac{x^2}{2\varepsilon} + \frac{\varepsilon}{2} &\text{If}~ x \leq \varepsilon
    \end{cases}
\end{equation*}
Then, 
\begin{equation*}
    f'(x) = \begin{cases}
        1 &\text{If}~ x > \varepsilon \\
        \frac{x}{\varepsilon} &\text{If}~ x \leq \varepsilon
    \end{cases}
\end{equation*}
One can observe that, $\forall \varepsilon > 0$: 
\begin{align*}
        f'(\varepsilon) = 1 && \textbf{but} &&  
        f(\varepsilon) - f(0) = \frac{\varepsilon^2}{2\varepsilon} + \frac{\varepsilon}{2} - \frac{\varepsilon}{2} = \frac{\varepsilon}{2} \xrightarrow[\varepsilon \rightarrow 0]{} 0 \smartqed
\end{align*}
which concludes the example.
\end{counterex}
\begin{theorem} \label{thm:OG-SDG}
    Let $f \in \Gamma_0(\X), x^{\star} = \proj_{\X^{\star}}(x)$, and $\beta = (\beta_x, \beta_y) \in (0, +\infty)^2$. Then, under Assumption \ref{assump:QEBSG}, the \acrlong{og} defined in (\ref{eqn:OGFG}) could be approximated in terms of the self-centered \acrlong{sdg} defined in (\ref{eqn:SDG}) for (\ref{Saddle pt. prob.}). More precisely, for all $z \in \Z$, 
       \begin{equation} \label{eqn:OG-SDG}
            f(x) - f^{\star} \leq \left(1 + \sqrt{\frac{2\beta_x}{\eta}}\right) \G(z) + \sqrt{2\beta_y}\|y\|\sqrt{\G(z)}
        \end{equation}
\end{theorem}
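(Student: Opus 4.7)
The plan is to adapt the outline sketched in the paragraph preceding the regularity assumptions, using the \acrshort{qebsg} (Assumption \ref{assump:QEBSG}) to eliminate the problematic term $\|x-x^{\star}\|$ in terms of $\G(z)$ itself.

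\textbf{Step 1: algebraic identity.} First I would add and subtract $\langle Ax-b,y\rangle+\tfrac{1}{2\beta_y}\|Ax-b\|^2$ to $f(x)-f^{\star}$ and recognize the result using the decomposition of the self-centered smoothed gap provided by Lemma \ref{lem:G decomposition} together with the closed form in Proposition \ref{props:SDG}. Since $\G(x,y^{\star};x^{\star},y)\geq f(x)-f^{\star}+\langle Ax-b,y^{\star}\rangle=f(x)-f^{\star}$ by primal feasibility at $x^{\star}$, the identity I aim for is
\begin{equation*}
    f(x)-f^{\star}\;=\;\G(z)\;-\;\G(x^{\star},y;x,y^{\star})\;-\;\langle Ax-b,y\rangle\;-\;\tfrac{1}{2\beta_y}\|Ax-b\|^2,
\end{equation*}
which is exactly the expansion used in the motivational discussion.

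\textbf{Step 2: bound the three residual terms.} The last term is non-positive and I would simply drop it. For the linear inner product, I would apply Cauchy--Schwarz and then Corollary \ref{coro:FG-SDG} to get
\begin{equation*}
    -\langle Ax-b,y\rangle \;\leq\; \|y\|\,\|Ax-b\| \;\leq\; \sqrt{2\beta_y\,\G(z)}\,\|y\|,
\end{equation*}
which produces the second summand of the target inequality. For the outer-saddle smoothed gap, Lemma \ref{lem: G(*, .;.,*) - sqrt eta} gives directly
\begin{equation*}
    -\G(x^{\star},y;x,y^{\star}) \;\leq\; 2\sqrt{\beta_x\,\G(z)}\,\|x-x^{\star}\|,
\end{equation*}
which still carries the blue unknown quantity $\|x-x^{\star}\|$.

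\textbf{Step 3: remove $\|x-x^{\star}\|$ via \acrshort{qebsg}.} This is the key step and the main obstacle, since until now the bound depends on the uncomputable $x^{\star}$. I invoke Assumption \ref{assump:QEBSG}: because $x^{\star}=\proj_{\X^{\star}}(x)$, one has $\|x-x^{\star}\|\leq \dist(z,\Z^{\star})\leq\sqrt{2\G(z)/\eta}$. Substituting yields
\begin{equation*}
    -\G(x^{\star},y;x,y^{\star}) \;\leq\; 2\sqrt{\beta_x\,\G(z)}\cdot\sqrt{\tfrac{2\G(z)}{\eta}} \;=\; 2\sqrt{\tfrac{2\beta_x}{\eta}}\,\G(z),
\end{equation*}
so this residual is absorbed entirely into a multiple of $\G(z)$.

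\textbf{Step 4: assemble.} Collecting the pieces gives
\begin{equation*}
    f(x)-f^{\star} \;\leq\; \G(z) + C\sqrt{\tfrac{\beta_x}{\eta}}\,\G(z) + \sqrt{2\beta_y}\,\|y\|\sqrt{\G(z)},
\end{equation*}
for an absolute constant $C$ matching the one in the statement, which is the claimed inequality. The only subtle point is ensuring that the point $z$ lies in the region $\mathcal{R}$ where \acrshort{qebsg} holds; otherwise the bound should be read as a local statement. Apart from this caveat, every other step is a mechanical combination of the preceding lemmas and corollaries, so the real difficulty lies in Step 3, namely seeing that \acrshort{qebsg} is precisely the lever that converts the offending $\|x-x^{\star}\|$ factor into a further multiple of $\G(z)$.
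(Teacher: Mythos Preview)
Your plan is correct and follows essentially the same route as the paper's own proof: the same algebraic decomposition via Lemma~\ref{lem:G decomposition}, the same use of Lemma~\ref{lem: G(*, .;.,*) - sqrt eta} for the outer-saddle term, and the same invocation of Assumption~\ref{assump:QEBSG} in Step~3 to absorb $\|x-x^{\star}\|$ into $\G(z)$. The only wrinkle is the exact constant in front of $\sqrt{\beta_x/\eta}$, where the paper itself is not internally consistent between the statement and the proof of Lemma~\ref{lem: G(*, .;.,*) - sqrt eta} and Theorem~\ref{thm:OG-SDG}, so your caution in Step~4 is warranted.
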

\begin{proof}
We start by rewriting the optimality gap in a decomposed way: 
\begin{align*}
    f(x) - f^{\star} &= \begin{multlined}[t] {\color{red} f(x) + \left\langle Ax - b, y \right\rangle + \frac{1}{2\beta_y} \left\|Ax - b\right\|^2 - f(x^{\star}) } - \left\langle Ax - b, y \right\rangle \\ - \frac{1}{2\beta_y} \left\|Ax - b\right\|^2 \end{multlined}\\ 
    &= {\color{red} \G(x, y^{\star} ; x^{\star}, y)} - \left\langle Ax -b, y \right\rangle - \frac{1}{2\beta_y} \left\|Ax - b\right\|^2  \\ 
    & \stackrel{(\ref{eqn:G decomposition})}{=} {\color{red} \G(z) - \G(x^{\star}, y ; x, y^{\star})} - \left\langle Ax - b, y \right\rangle - \frac{1}{2\beta_y} \left\|Ax - b \right\|^2 \\
    & \stackrel{(\ref{eqn:G(*, .;.,*) - sqrt eta})}{\leq} \G(z) {\color{red} + \sqrt{2\beta_x \G(z)}} {\color{blue} \|x - x^{\star}\|} - \left\langle Ax - b, y \right\rangle {\color{ao(english)}- \frac{1}{2\beta_y} \left\|Ax - b \right\|^2 }\\
    & \stackrel{(\ref{eqn:QEBSG})}{\leq} \G(z) + \sqrt{2\beta_x \G(z)} {\color{blue} \sqrt{\frac{2}{\eta} \G(z)}} - \left\langle Ax - b, y \right\rangle {\color{ao(english)}- 0}\\
    &\leq \left(1 + 2\sqrt{\frac{\beta_x}{\eta}}\right) \G(z) + \|y\| {\color{red} \left\|Ax - b\right\|}  \\
    &\stackrel{(\ref{eqn:FG-SDG})}{\leq} \left(1 + 2\sqrt{\frac{\beta_x}{\eta}}\right) \G(z) + \|y\|\sqrt{2\beta_y \G(z)}    \smartqed 
 \end{align*}
 \end{proof}
\begin{remark}
The quadratic error bound constant, $\eta$, exhibits a similar characteristic of taking very small values, akin to the metric sub-regularity constant. In Theorem \ref{thm:OG-SDG}, we managed to establish an upper bound that depends on the square root of $\eta$, while maintaining its multiplication with $\G(z)$ instead of $\sqrt{\G(z)}$. Nevertheless, it is noteworthy that an alternative bound could be formulated directly in terms of $\eta$ itself by using Corollary \ref{coro:bound G(*, .;.,*)}.
\end{remark}
\begin{theorem} \label{thm:OG-PDG}
    Let $f \in \Gamma_0(\X), x^{\star} = \proj_{\X^{\star}}(x)$, $\beta = (\beta_x, \beta_y) \in (0, +\infty)^2$, and $\beta_{\min} = \min(\beta_x, \beta_y)$. Then, under Assumption \ref{assump:QEBSG}, the \acrlong{og} defined in (\ref{eqn:OGFG}) could be approximated in terms of the \acrlong{pdg} defined in (\ref{eqn:PDG}) for (\ref{Saddle pt. prob.}). More precisely, for all $z \in \Z$, 
    \begin{equation} \label{eqn:OG-PDG}
        f(x) - f^{\star} \leq \left( 1 + \|x\| + \sqrt{\frac{2}{\eta}} \sqrt{ (1 + \|x\| + \|y\|) \sqrt{\D(z)} + \frac{1}{2\beta_{\min}}\D(z)} \right) \sqrt{\D(z)}
    \end{equation}
\end{theorem}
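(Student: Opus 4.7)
The plan is to execute the two-step strategy sketched in the excerpt: first derive an initial bound on $f(x) - f^\star$ that still depends on the unknown $\|x^\star\|$, and then use Assumption~\ref{assump:QEBSG} to control this quantity by the PDG. For the initial bound, I would start from the identity $f(x) - f^\star = [f(x) + f^*(a) + \langle b, y\rangle] - [f(x^\star) + f^*(a) + \langle b, y\rangle]$, apply Fenchel--Young (Proposition~\ref{prop:Fenchel-Young}) to get $f(x^\star) + f^*(a) \geq \langle a, x^\star\rangle$, and use the primal feasibility $Ax^\star = b$ to rewrite $\langle b, y\rangle = \langle A^Ty, x^\star\rangle$. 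This yields $f(x) - f^\star \leq [f(x) + f^*(a) + \langle b, y\rangle] - \langle a + A^Ty, x^\star\rangle$, and Cauchy--Schwarz combined with the elementary estimates $|f(x)+f^*(a)+\langle b,y\rangle| \leq \sqrt{\D(z)}$ and $\|a+A^Ty\| \leq \sqrt{\D(z)}$ (both inherited from \eqref{eqn:PDG}) gives $f(x) - f^\star \leq (1 + \|x^\star\|)\sqrt{\D(z)}$.

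The next step is to eliminate $\|x^\star\|$. By the triangle inequality, $\|x^\star\| \leq \|x\| + \|x - x^\star\|$, and by Assumption~\ref{assump:QEBSG}, $\|x - x^\star\|^2 \leq \dist(z, \Z^\star)^2 \leq \tfrac{2}{\eta}\G(z)$. It then remains to upper bound $\G(z)$ in terms of $\D(z)$, which is the technical heart of the argument. I would start from
\begin{align*}
\G(z) = f(x) + \langle b, y\rangle &+ \sup_{x'}\bigl[-f(x') - \langle Ax', y\rangle - \tfrac{\beta_x}{2}\|x'-x\|^2\bigr] \\ &+ \sup_{y'}\bigl[\langle Ax-b, y'\rangle - \tfrac{\beta_y}{2}\|y'-y\|^2\bigr],
\end{align*}
and relax the inner $x'$-supremum via Fenchel--Young applied at the \emph{feasible} conjugate point $a = \proj_{\dom f^*}(-A^Ty) \in \dom f^*$: namely, $-f(x') \leq f^*(a) - \langle a, x'\rangle$. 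The resulting smoothed sup over $x'$ evaluates in closed form to $f^*(a) - \langle a+A^Ty, x\rangle + \tfrac{1}{2\beta_x}\|a+A^Ty\|^2$, and the $y'$-sup to $\langle Ax-b, y\rangle + \tfrac{1}{2\beta_y}\|Ax-b\|^2$. Regrouping yields $\G(z) \leq [f(x) + f^*(a) + \langle b, y\rangle] + \langle Ax-b, y\rangle - \langle a+A^Ty, x\rangle + \tfrac{1}{2\beta_x}\|a+A^Ty\|^2 + \tfrac{1}{2\beta_y}\|Ax-b\|^2$. Cauchy--Schwarz on the linear terms, the individual bounds of the PDG components, and the estimate $\tfrac{1}{2\beta_x}\|a+A^Ty\|^2 + \tfrac{1}{2\beta_y}\|Ax-b\|^2 \leq \tfrac{1}{2\beta_{\min}}\D(z)$ then produce
\begin{equation*}
\G(z) \leq (1 + \|x\| + \|y\|)\sqrt{\D(z)} + \tfrac{1}{2\beta_{\min}}\D(z).
\end{equation*}

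The last step is simply to chain everything: substitute this bound into $\|x-x^\star\| \leq \sqrt{2\G(z)/\eta}$, then into $\|x^\star\| \leq \|x\| + \|x - x^\star\|$, and finally into the initial bound $f(x) - f^\star \leq (1+\|x^\star\|)\sqrt{\D(z)}$; the result is precisely \eqref{eqn:OG-PDG}. The main obstacle is the middle step, the upper bound of $\G(z)$ by $\D(z)$: a naive approach would try to evaluate the unsmoothed $x'$-sup as $f^*(-A^Ty)$, which is typically $+\infty$ whenever $-A^Ty \notin \dom f^*$. The whole reason the PDG introduces the projection $a$ is to sidestep this obstruction, and the key algebraic trick is to invoke Fenchel--Young with the \emph{projected} point $a$ instead of with $-A^Ty$ itself; once this relaxation is in place, the rest is careful Cauchy--Schwarz bookkeeping.
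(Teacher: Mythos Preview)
Your proposal is correct and follows the paper's own proof essentially line for line: the same initial bound $(1+\|x^\star\|)\sqrt{\D(z)}$ via Fenchel--Young at $a$ and $Ax^\star=b$, the same elimination of $\|x^\star\|$ via triangle inequality and Assumption~\ref{assump:QEBSG}, and the same invocation of the bound $\G(z)\le (1+\|x\|+\|y\|)\sqrt{\D(z)}+\tfrac{1}{2\beta_{\min}}\D(z)$ (Theorem~\ref{thm:SDG-PDG}). The only cosmetic difference is in how you derive that last inequality: the paper first uses the closed form \eqref{eqn:SDG} involving the proximal point $p$, bounds $-f(p)\le f^*(a)-\langle p,a\rangle$, and then applies Young's inequality to $\langle x-p,a+A^Ty\rangle$ to absorb the $-\tfrac{\beta_x}{2}\|x-p\|^2$ term; you instead relax $-f(x')\le f^*(a)-\langle a,x'\rangle$ \emph{inside} the supremum and solve the resulting pure quadratic directly, which is a mild streamlining that bypasses $p$ and Young's inequality altogether but lands on the identical expression.
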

In the proof of this theorem, we will use some arguments that will come later in this paper. So, we will provide its proof later on as well.

\begin{remark}
Designing a new regularity assumption that inherently fits \acrshort{pdg} is out of the scope of the paper. Instead, we take advantage of assuming \acrshort{qebsg} to derive our result. 
\end{remark}

\section{Comparability Bounds} \label{sec:comparability bounds}

Our subsequent objective is motivated by the belief that the newly proposed optimality measure, the \acrlong{sdg}, might serve as a more appropriate stopping criterion compared to the others. This belief comes to light from the smoothed duality gap's definition; whereas the \acrlong{kkt} relies on the sub-differential of the objective function to assess optimality, the \acrlong{sdg} is more directly based on the objective function itself. Hence, in this section, our goal is to conduct a comparative analysis between the \acrlong{sdg}, the \acrlong{kkt}, and the \acrlong{pdg}. We aim to investigate and identify the conditions under which these measures could serve as approximations for the \acrlong{sdg} and vice versa.

\subsection{SDG -- KKT bounds} \label{subseq:SDG-KKT}
In this subsection, we start our comparative analysis between the \acrlong{sdg} and the \acrlong{kkt}. Initially, we present the upper bound obtained for \acrshort{SDG} in terms of the \acrshort{kkt}. Subsequently, we demonstrate the reverse relationship.
\begin{theorem} \label{thm:SDG-KKT}
    Given $\beta = (\beta_x, \beta_y) \in (0, +\infty)^2$, $z \in \Z$, and a function $f \in \Gamma_0(\X)$. Assume $\|q\| = \left\|\partial f(x) + A^Ty\right\|_0$. Then, for the \acrlong{kkt} and the \acrlong{sdg} defined, respectively, in (\ref{eqn:KKT error}) and (\ref{eqn:SDG}) we have: 
    \begin{align}
        \G(z) &\leq \underaccent{\bar}{\beta} \K(z) \label{eqn:SDG-KKT} \\ 
        \underaccent{\bar}{\beta} &= \max\left(\frac{1}{\beta_x}, \frac{1}{2\beta_y}\right) \label{eqn:beta_bar}
    \end{align}
\end{theorem}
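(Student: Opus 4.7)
The approach is to start from the closed-form expression for the self-centered smoothed gap given in Proposition \ref{props:SDG}, and to linearize the nonsmooth term $f(x) - f(p)$ by convexity of $f$, so that the remaining quadratic penalty $-\frac{\beta_x}{2}\|x-p\|^2$ can absorb the cross term by a single application of Young's inequality. I expect only one subtle point, namely the selection of the subgradient of $f$ at $x$; everything else is algebra.

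\textbf{Step 1 (trivial case).} If $\partial f(x) = \emptyset$, then by convention $\|\partial f(x)+A^Ty\|_0 = +\infty$, so $\K(z) = +\infty$ and the bound is vacuous. So assume $\partial f(x) \neq \emptyset$; since $f \in \Gamma_0(\X)$, the subdifferential is closed and convex, and therefore the projection of $-A^Ty$ onto $\partial f(x)$ exists. Pick $\xi \in \partial f(x)$ achieving the infimum, so that $\|\xi + A^Ty\| = \|\partial f(x)+A^Ty\|_0$. Set $q := \xi + A^Ty$.

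\textbf{Step 2 (linearize $f(x)-f(p)$).} By convexity of $f$ at $x$ with $\xi \in \partial f(x)$, Lemma \ref{lem:charac for str-cvx} (with $\mu = 0$) yields $f(p) \geq f(x) + \langle \xi, p-x\rangle$, that is
\begin{equation*}
    f(x) - f(p) \leq \langle \xi, x - p\rangle.
\end{equation*}
Plugging this into the closed-form (\ref{eqn:SDG}) and combining the inner products:
\begin{equation*}
    \G(z) \leq \langle \xi + A^Ty, x - p\rangle - \frac{\beta_x}{2}\|x-p\|^2 + \frac{1}{2\beta_y}\|Ax-b\|^2 = \langle q, x-p\rangle - \frac{\beta_x}{2}\|x-p\|^2 + \frac{1}{2\beta_y}\|Ax-b\|^2.
\end{equation*}

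\textbf{Step 3 (Young's inequality with the right parameter).} Apply Proposition \ref{props:YI} with $\lambda^2 = 1/\beta_x$ to the pair $(q, x-p)$:
\begin{equation*}
    \langle q, x-p\rangle \leq \frac{1}{2\beta_x}\|q\|^2 + \frac{\beta_x}{2}\|x-p\|^2.
\end{equation*}
The quadratic terms in $\|x-p\|^2$ then cancel exactly, leaving
\begin{equation*}
    \G(z) \leq \frac{1}{2\beta_x}\|q\|^2 + \frac{1}{2\beta_y}\|Ax-b\|^2.
\end{equation*}

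\textbf{Step 4 (conclude).} By the choice of $\xi$, $\|q\|^2 = \|\partial f(x)+A^Ty\|_0^2$, so the right-hand side is a convex combination of the two components of $\K(z)$; bounding it by the maximum coefficient gives
\begin{equation*}
    \G(z) \leq \max\!\left(\tfrac{1}{2\beta_x},\tfrac{1}{2\beta_y}\right)\bigl(\|\partial f(x)+A^Ty\|_0^2 + \|Ax-b\|^2\bigr) \leq \max\!\left(\tfrac{1}{\beta_x},\tfrac{1}{2\beta_y}\right)\K(z) = \underaccent{\bar}{\beta}\,\K(z),
\end{equation*}
which is the claimed inequality. The only genuinely delicate point is that the cancellation in Step 3 is possible only if Young's parameter is tuned exactly to $\beta_x$; any other choice would leave a nonzero multiple of $\|x-p\|^2$ that is not directly controlled by $\K(z)$.
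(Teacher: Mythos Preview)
Your proof is correct. Both you and the paper begin identically by linearizing $f(x)-f(p)$ via a subgradient of $f$ at $x$, arriving at
\[
\G(z) \leq \langle q, x-p\rangle - \tfrac{\beta_x}{2}\|x-p\|^2 + \tfrac{1}{2\beta_y}\|Ax-b\|^2.
\]
From here the arguments diverge. The paper discards the negative quadratic, applies Cauchy--Schwarz, and then invokes the proximal characterization of $p$ (Lemma~\ref{lem:p and fermat}) together with monotonicity of $\partial f$ (Lemma~\ref{lem:Sub-diff property}) to obtain $\|x-p\| \leq \tfrac{1}{\beta_x}\|q\|$, which yields the coefficient $\tfrac{1}{\beta_x}$ on $\|q\|^2$. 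You instead keep the negative quadratic and absorb the cross term with a single Young's inequality, which cancels $\|x-p\|^2$ exactly and produces the sharper coefficient $\tfrac{1}{2\beta_x}$ before you relax it to $\tfrac{1}{\beta_x}$ to match the statement. Your route is more elementary (it needs nothing about $p$ beyond its appearance in the closed form) and in fact proves the stronger bound $\G(z) \leq \max\!\left(\tfrac{1}{2\beta_x},\tfrac{1}{2\beta_y}\right)\K(z)$; the paper's route, on the other hand, isolates the useful structural inequality $\|q\| \geq \beta_x\|x-p\|$, which connects the KKT residual directly to the proximal displacement.
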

\begin{proof}
First of all, let us observe the following: 
\begin{itemize}
    \item Using the definition of the sub-differential, we obtain: 
    \begin{align*}
        q \in \partial f(x) + A^Ty &\iff \forall u \in \R^n, ~ f(u) \geq f(x) + \langle q - A^Ty, u - x \rangle \\
        &\stackrel{u = p}{\Longrightarrow} f(x) - f(p) \leq \langle q - A^Ty, x - p \rangle \stepcounter{equation}\tag{\theequation}\label{SDG-KKT:fx-fp} \\ 
    \end{align*}
    \item From Lemma \ref{lem:p and fermat}, we know that $$ p = \prox_{\beta_x^{-1}f} \left(x - \frac{1}{\beta_x}A^Ty\right) \iff \beta_x(x - p) \in \partial f(p) + A^Ty$$ Hence, by Lemma \ref{lem:Sub-diff property}, for any $q \in \partial f(x) + A^Ty$, we get: 
    \begin{equation} \label{SDG-KKT:p-x and q}
    \begin{split}
         \langle \beta_x (x - p) - q, p - x \rangle \geq 0  &\iff \langle q, x - p \rangle \geq \beta_x \|p-x\|^2 \\ &\iff  \|q\| \geq \beta_x \|p - x\|
    \end{split}
    \end{equation}
\end{itemize}
Now, from the definition of the SDG: 
\begin{align*}
    \G(z) &= {\color{red} f(x) - f(p)} + \langle A(x - p), y \rangle - \frac{\beta_x}{2} \| p - x \|^2 + \frac{1}{2\beta_y}\|Ax - b\|^2 \\ 
    &\stackrel{(\ref{SDG-KKT:fx-fp})}{\leq} {\color{red} \langle q - A^Ty, x - p \rangle } + \langle A(x - p), y \rangle - \frac{\beta_x}{2} \| p - x \|^2 + \frac{1}{2\beta_y}\|Ax - b\|^2 \\ 
    &\leq \langle q, x - p \rangle + \frac{1}{2\beta_y}\|Ax - b\|^2 \\
    &\leq \|q\| {\color{red} \|x - p\|} + \frac{1}{2\beta_y}\|Ax - b\|^2 \\
    &\stackrel{(\ref{SDG-KKT:p-x and q})}{\leq} \|q\| {\color{red} \frac{1}{\beta_x} \|q\|} + \frac{1}{2\beta_y}\|Ax - b\|^2 \\
    &\stackrel{(\ref{eqn:beta_bar})}{\leq} \underaccent{\bar}{\beta} \left[\left\|\partial f(x) + A^Ty\right\|^2 + \|Ax - b\|^2\right] \\
    &= \underaccent{\bar}{\beta} \K(z) \smartqed 
\end{align*}
\end{proof}
\begin{theorem} \label{thm:KKT-SDG}
    Given $\beta = (\beta_x, \beta_y) \in (0, +\infty)^2$, $z \in \Z$, and a differentiable function $f \in \Gamma_0(\X)$ that has an $L-$Lipschitz gradient. Then, for the \acrlong{kkt} and the \acrlong{sdg} defined, respectively, in (\ref{eqn:KKT error}) and (\ref{eqn:SDG}) we have: 
    \begin{align}
        \K(z) &\leq \Bar{\beta}_L \G(z) \label{eqn:KKT-SDG}\\ 
        \Bar{\beta}_L &= \max\left(\frac{2(L + \beta_x)^2}{\beta_x}, 2\beta_y \right) \label{eqn:beta_L}
    \end{align}
\end{theorem}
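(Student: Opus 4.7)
The plan is to bound each term of $\K(z)$ separately by a multiple of $\|x-p\|^2$ or $\|Ax-b\|^2$, and then invoke Lemma \ref{lem:G lower bound} which controls both those quantities simultaneously by $\G(z)$. The key observation is that because Lemma \ref{lem:G lower bound} gives the \emph{sum} $\frac{\beta_x}{2}\|x-p\|^2 + \frac{1}{2\beta_y}\|Ax-b\|^2 \leq \G(z)$, we can afford to take the maximum (rather than the sum) of the two coefficients when bounding $\K(z)$.

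The first step is to exploit differentiability: since $f \in \Gamma_0(\X)$ and is differentiable, $\partial f(p) = \{\nabla f(p)\}$, so Lemma \ref{lem:p and fermat} yields the equality $\beta_x(x - p) = \nabla f(p) + A^Ty$. Substituting this into $\nabla f(x) + A^T y$ and adding and subtracting $\nabla f(p)$ gives
\begin{equation*}
    \nabla f(x) + A^Ty = \bigl(\nabla f(x) - \nabla f(p)\bigr) + \beta_x(x - p).
\end{equation*}
The triangle inequality together with the $L$-Lipschitzness of $\nabla f$ then yields $\|\nabla f(x) + A^Ty\| \leq (L+\beta_x)\|x - p\|$, hence $\|\nabla f(x) + A^Ty\|^2 \leq (L+\beta_x)^2 \|x-p\|^2$.

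The second step is to assemble the pieces. Writing the two terms of $\K(z)$ in the form
\begin{equation*}
    \|\nabla f(x) + A^Ty\|^2 \leq \frac{2(L+\beta_x)^2}{\beta_x} \cdot \frac{\beta_x}{2}\|x-p\|^2, \qquad \|Ax - b\|^2 = 2\beta_y \cdot \frac{1}{2\beta_y}\|Ax - b\|^2,
\end{equation*}
we factor out the maximum of the two leading coefficients, namely $\Bar{\beta}_L$, so that
\begin{equation*}
    \K(z) \leq \Bar{\beta}_L\left(\frac{\beta_x}{2}\|x-p\|^2 + \frac{1}{2\beta_y}\|Ax-b\|^2\right) \leq \Bar{\beta}_L\, \G(z),
\end{equation*}
where the last inequality is exactly Lemma \ref{lem:G lower bound}.

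There is no real obstacle here beyond being careful to use the joint lower bound from Lemma \ref{lem:G lower bound} rather than two separate ones (which would have produced a sum of coefficients instead of a max). The only place where differentiability of $f$ is genuinely used is in step one, to replace the subdifferential inclusion of Lemma \ref{lem:p and fermat} by an equality and then to apply the $L$-Lipschitz estimate on $\nabla f$; without differentiability, one could only bound $\|\partial f(x) + A^Ty\|_0$ by $\beta_x\|x-p\|$ through Lemma \ref{lem:Sub-diff property}, which loses the Lipschitz factor and is generally insufficient for the reverse direction.
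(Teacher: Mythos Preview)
Your proof is correct and follows essentially the same route as the paper: use Lemma \ref{lem:p and fermat} with differentiability to get $\beta_x(x-p) = \nabla f(p) + A^Ty$, combine with the $L$-Lipschitz gradient to obtain $\|\nabla f(x)+A^Ty\| \leq (L+\beta_x)\|x-p\|$, and then invoke Lemma \ref{lem:G lower bound} and factor out the maximum coefficient. The only cosmetic difference is that the paper uses the reverse triangle inequality where you substitute directly and use the triangle inequality, but the two are equivalent.
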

\begin{proof}
 We start by observing the following: 
\begin{itemize}
    \item From Lemma \ref{lem:p and fermat}, we know that $$ p = \prox_{\beta_x^{-1}f} \left(x - \frac{1}{\beta_x}A^Ty\right) \iff \beta_x(x - p) \in \partial f(p) + A^Ty$$ Hence, keeping in mind that $f$ is differentiable, we get:
    \begin{equation} \label{KKT-SDG:norm of x-p}
        \beta_x\|x - p\| = \left\|\nabla f(p) + A^Ty\right\| 
    \end{equation}
    \item Since the gradient of $f$ is $L-$Lipschitz, then by the reverse triangle inequality, we get: 
    \begin{align*}
        \left\|\nabla f(x) + A^Ty \right\| - \left\|\nabla f(p) + A^Ty \right\| &\leq \left\|\left(\nabla f(x) + A^Ty\right) - \left(\nabla f(p) + A^Ty\right) \right\| \\
        &\leq L \|x - p\| \\ 
    \end{align*}
    Hence, 
    \begin{align*}
      \left\|\nabla f(x) + A^Ty \right\| & \leq L \|x - p\| + {\color{red} \left\|\nabla f(p) + A^Ty \right\|} \\ 
        &\stackrel{(\ref{KKT-SDG:norm of x-p})}{\leq} L \|x - p \| + {\color{red} \beta_x \|x - p\|} \\ 
        &= (L + \beta_x) \|x - p\| \stepcounter{equation}\tag{\theequation}\label{KKT-SDG:x-p lower bound}
    \end{align*}
    Therefore, starting from Lemma \ref{lem:G lower bound}, we get: 
    \begin{align*}
        \G(z) &\geq {\color{red} \frac{\beta_x}{2} \|x - p\|^2} + \frac{1}{2\beta_y} \|Ax - b\|^2 \\ 
        & \stackrel{(\ref{KKT-SDG:x-p lower bound})}{\geq} \frac{\beta_x}{2} {\color{red} \frac{\|\nabla f(x) + A^Ty \|^2}{(L + \beta_x)^2}} + \frac{1}{2\beta_y} \|Ax - b\|^2 \\ 
        &{\geq} \min\left(\frac{\beta_x}{2(L + \beta_x)^2}, \frac{1}{2\beta_y} \right) \left[\|\nabla f(x) + A^Ty \|^2 + \|Ax - b\|^2\right] 
        \end{align*}
        which implies the result: 
        \begin{equation*}
            \K(z) \leq \bar{\beta}_L \G(z) \smartqed 
        \end{equation*}
\end{itemize}
\end{proof}
\begin{counterex}

Theorem \ref{thm:KKT-SDG} necessitates the assumption that the objective function is differentiable and has an $L$-Lipschitz gradient. Without these conditions, Theorem \ref{thm:KKT-SDG} may not hold. This is exemplified in this counterexample.

Let $\beta = (\beta_x, \beta_y) = (1, 1)$, and consider the unconstrained optimization problem $\displaystyle \min_{x \in \R} |x|$.

Then, the associated Lagrangian is $\Lag(z) = \Lag(x) = |x|$. Hence, 
\begin{itemize}
    \item  The smoothed duality gap is defined as: 
    \begin{align*}
    \G(z) = \G(x) &= |x| - \min_{x'} |x'| + \frac{1}{2} (x' - x)^2 \\
    &= |x| - \left|\prox_{|.|}(x)\right| - \frac{1}{2} \left(\prox_{|.|}(x) - x\right)^2 \\ 
    &= |x| - \left|[|x| - 1]_+\sgn(x)\right| - \frac{1}{2} \left([|x| - 1]_+\sgn(x) - x\right)^2 \\
     &= |x| - [|x| - 1]_+ - \frac{1}{2} \left( [|x| - 1]_+\sgn(x) - x\right)^2
\end{align*}
\item The KKT error is defined as: 
\begin{equation*}
    \K(z) = \K(x) = \|\partial f(x) \|_0^2 ~\text{with}~ \partial f(x) = \begin{cases}
        -1 & \text{If}~ x < 0 \\ [-1, 1] & \text{If}~ x = 0 \\ 1 &\text{If}~ x> 0
    \end{cases}
\end{equation*}
Then, 
\begin{align*}
    \lim_{x \rightarrow 0} \K(x) = 1 && \textbf{but} && \lim_{x \rightarrow 0} \G(x) = \lim_{x \rightarrow 0} |x| - \frac{1}{2}x^2 = 0 \smartqed
\end{align*}
which concludes the example.
\end{itemize}
\end{counterex}

\subsection{SDG -- PDG bounds}
In this subsection, we proceed with our comparative analysis, this time between the \acrlong{sdg} and the \acrlong{pdg}. Firstly, we present the upper bound acquired for the \acrlong{sdg} in terms of the \acrlong{pdg}. Following this, we illustrate the converse relationships. Furthermore, we will revisit Theorem \ref{thm:OG-PDG} and derive its proof.
\begin{theorem} \label{thm:SDG-PDG}
    Given $\beta = (\beta_x, \beta_y) \in (0, +\infty)^2$, $z \in \Z$, and a function $f \in \Gamma_0(\X)$. Let $\beta_{\min} = \min(\beta_x, \beta_y)$. Then, for the \acrlong{pdg} and the \acrlong{sdg} defined, respectively, in (\ref{eqn:PDG}) and (\ref{eqn:SDG}) we have: 
    \begin{equation} \label{eqn:SDG-PDG}   
        \G(z) \leq (1 + \|x\| + \|y\|) \sqrt{\D(z)} + \frac{1}{2\beta_{\min}}\D(z) 
    \end{equation}
\end{theorem}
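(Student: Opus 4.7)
The strategy is to start from the closed form of $\G(z)$ given in Proposition \ref{props:SDG} and trade the awkward $-f(p)$ term for quantities that appear naturally in $\D(z)$. Since $a = \proj_{\dom f^*}(-A^Ty)$ lies in $\dom f^*$, the Fenchel--Young inequality (Proposition \ref{prop:Fenchel-Young}) yields $-f(p) \leq f^*(a) - \langle a, p\rangle$. Plugging this into $\G(z)$ and rearranging the cross terms, I would rewrite $-\langle a, p\rangle + \langle A(x-p), y\rangle$ as $-\langle p-x,\, a + A^Ty\rangle - \langle a, x\rangle + \langle A^Ty, x\rangle$, so that everything except a quadratic residue in $\|p-x\|$ collapses into $T_1 + \langle Ax - b, y\rangle - \langle a + A^Ty,\, x\rangle$, where $T_1 := f(x) + f^*(a) + \langle b, y\rangle$ is precisely the signed quantity whose absolute value is one of the three summands of $\D(z)$.

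The main algebraic step is then to eliminate the variable $p$ entirely. The leftover cross term $-\langle p-x,\, a+A^Ty\rangle$ is handled by Young's inequality (Proposition \ref{props:YI}) with the calibrated parameter $\lambda^2 = \beta_x$, which produces $\tfrac{\beta_x}{2}\|p-x\|^2 + \tfrac{1}{2\beta_x}\|a+A^Ty\|^2$. The quadratic part cancels exactly against the $-\tfrac{\beta_x}{2}\|p-x\|^2$ already present in $\G(z)$, leaving only $\tfrac{1}{2\beta_x}\|a+A^Ty\|^2$. Together with the $\tfrac{1}{2\beta_y}\|Ax-b\|^2$ term from the original expression, one obtains an upper bound on $\G(z)$ that no longer involves $p$.

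It then remains to bound each piece in terms of $\D(z)$. Cauchy--Schwarz gives $\langle Ax-b, y\rangle \leq \|y\|\sqrt{\D(z)}$ and $\langle a+A^Ty, x\rangle \leq \|x\|\sqrt{\D(z)}$, while $|T_1| \leq \sqrt{\D(z)}$ holds directly by the definition of $\D$. For the two quadratic residues, the key observation, and what I expect to be the only subtle point, is \emph{not} to bound each squared norm individually by $\D(z)$ (which would cost a factor $\tfrac{1}{\beta_{\min}}$), but to use that $\|a+A^Ty\|^2 + \|Ax-b\|^2 \leq \D(z)$ since both are non-negative summands of $\D(z)$; this yields $\tfrac{1}{2\beta_x}\|a+A^Ty\|^2 + \tfrac{1}{2\beta_y}\|Ax-b\|^2 \leq \tfrac{1}{2\beta_{\min}}\D(z)$, exactly matching the advertised constant. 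Summing the linear and quadratic contributions delivers $\G(z) \leq (1 + \|x\| + \|y\|)\sqrt{\D(z)} + \tfrac{1}{2\beta_{\min}}\D(z)$, as claimed.

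The delicate ingredient is thus the precise calibration $\lambda^2 = \beta_x$ in Young's inequality: any other choice would leave a residual quadratic in $\|p-x\|$ that cannot be controlled without further assumptions, and any coarser bookkeeping on the two squared norms would replace the stated constant $\tfrac{1}{2\beta_{\min}}$ by the strictly worse $\tfrac{1}{\beta_{\min}}$. This combination of Fenchel--Young on $f(p)$, a calibrated Young's inequality on the cross term, and the aggregate bound on the quadratic summands of $\D(z)$ is what makes the estimate tight.
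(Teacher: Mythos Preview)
Your proposal is correct and follows essentially the same route as the paper: Fenchel--Young to replace $-f(p)$ by $f^*(a)-\langle a,p\rangle$, the calibrated Young inequality with $\lambda^2=\beta_x$ so that the $\tfrac{\beta_x}{2}\|p-x\|^2$ terms cancel, then Cauchy--Schwarz on the three linear pieces and the aggregate bound $\|a+A^Ty\|^2+\|Ax-b\|^2\le\D(z)$ to recover the constant $\tfrac{1}{2\beta_{\min}}$. One small caveat: the intermediate identity you state for $-\langle a,p\rangle+\langle A(x-p),y\rangle$ is off by a single $\langle x,A^Ty\rangle$ term, but the collapsed form $T_1+\langle Ax-b,y\rangle-\langle a+A^Ty,x\rangle-\langle p-x,a+A^Ty\rangle$ you actually use is correct, so the argument is unaffected.
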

\begin{proof}
By the Fenchel-conjugate definition, we have: 
\begin{equation} \label{SDG-PDG:fp bound}
    f^*(a) = \sup_x ~~ \langle a, x \rangle - f(x) \stackrel{x = p}{\geq} \langle a, p \rangle - f(p) \Longrightarrow - f(p) \leq f^*(a) - \langle p, a \rangle 
\end{equation}
Thus, 
\begin{align*}
    \G(z) &= f(x) {\color{red} - f(p)} + \left\langle A(x - p), y \right\rangle - \frac{\beta_x}{2}\|x - p\|^2 + \frac{1}{2\beta_y} \|Ax - b\|^2 \\ 
    &\stackrel{(\ref{SDG-PDG:fp bound})}{\leq} f(x) {\color{red} + f^*(a)} + \left\langle Ax, y \right\rangle - \left\langle p, {\color{red}a} + A^Ty \right\rangle -\frac{\beta_x}{2}\|x - p\|^2 + \frac{1}{2\beta_y} \|Ax - b\|^2\\
    &= \begin{multlined}[t] f(x) + f^*(a) + \left\langle Ax, y \right\rangle - \left\langle {\color{blue} x}, a + A^Ty \right\rangle + \left\langle {\color{blue}x} - p, a + A^Ty \right\rangle \\ -\frac{\beta_x}{2}\|x - p\|^2 + \frac{1}{2\beta_y} \|Ax - b\|^2\end{multlined} 
\end{align*}
Now, by making use of Young's inequality \textit{(Proposition \ref{props:YI})} with $\mathbf{u} = x - p, \\ \mathbf{v} = a + A^Ty$ and $\lambda = \sqrt{\beta_x}$, we get: 
\begin{equation} \label{SDG-PDG:young}
    \left\langle x - p, a + A^Ty \right\rangle \leq \frac{\beta_x}{2} \|x - p\|^2 + \frac{1}{2\beta_x}\left\|a + A^Ty\right\|^2 
\end{equation}
Therefore, 
\begin{align*}
    \G(z) &= \begin{multlined}[t] f(x) + f^*(a) + \left\langle Ax, y \right\rangle - \left\langle x, a + A^Ty \right\rangle + {\color{red} \left\langle x- p, a + A^Ty \right\rangle} \\ -\frac{\beta_x}{2}\|x - p\|^2 + \frac{1}{2\beta_y} \|Ax - b\|^2 \end{multlined} \\ 
    &\stackrel{(\ref{SDG-PDG:young})}{\leq} \begin{multlined}[t] f(x) + f^*(a) + \left\langle Ax, y \right\rangle - \left\langle x, a + A^Ty \right\rangle + {\color{red} \frac{\beta_x}{2} \|x - p\|^2} \\ {\color{red} + \frac{1}{2\beta_x}\left\|a + A^Ty\right\|^2 }  -\frac{\beta_x}{2}\|x - p\|^2  + \frac{1}{2\beta_y} \|Ax - b\|^2\end{multlined} \\ 
    &= \begin{multlined}[t]  f(x) + f^*(a) {\color{blue} + \langle b, y\rangle} + \left\langle Ax {\color{blue} -b}, y \right\rangle - \left\langle x, a + A^Ty \right\rangle + \frac{1}{2\beta_x}\left\|a + A^Ty\right\|^2  \\ + \frac{1}{2\beta_y} \|Ax - b\|^2 \end{multlined} \\ 
    &\leq \begin{multlined}[t] |f(x) + f^*(a) + \langle b, y\rangle| + \left\|Ax - b\right\| \|y\| + \|x\| \left\|a + A^Ty\right\|\\  + \frac{1}{2\beta_x}\left\|a + A^Ty\right\|^2  + \frac{1}{2\beta_y} \|Ax - b\|^2 \end{multlined} \\ 
    &\leq \begin{multlined}[t]
     |f(x) + f^*(a) + \langle b, y\rangle| + \left\|Ax - b\right\| \|y\| + \|x\| \left\|a + A^Ty\right\| \\ + \frac{1}{2\beta_{\min}}\left(\left\|a + A^Ty\right\|^2 + \left\|Ax - b\right\|^2\right) \end{multlined} \\ 
    &\stackrel{(\ref{eqn:PDG})}{\leq} (1 + \|x\| + \|y\|) \sqrt{\D(z)} + \frac{1}{2\beta_{\min}} \D(z)    \smartqed
\end{align*}
\end{proof}
Now, we will derive Theorem \ref{thm:OG-PDG}. You'll notice in the proof that at a certain stage, we will establish a bound in terms of the \acrlong{sdg}. Therefore, it was necessary to acquire an approximation for the \acrlong{sdg} in terms of the \acrlong{pdg} before completing the proof. With the assistance of Theorem \ref{thm:SDG-PDG}, we now possess this approximation and can proceed to derive Theorem \ref{thm:OG-PDG}.
\begin{proof}
Starting with the definition of the Fenchel-Conjugate function at an optimal solution, we get: 
\begin{align*}
    f(x^{\star}) = \sup_{\mu \in \X} ~ \langle \mu, x^{\star} \rangle - f^*(\mu) & \stackrel{\mu = a}{\geq} \langle a, x^{\star} \rangle - f^*(a) \\ 
    &= \left\langle x^{\star}, {\color{red} -A^Ty} \right\rangle - f^*(a) + \left\langle x^{\star}, a {\color{red} + A^Ty} \right\rangle \\ 
    & \geq - \langle b, y \rangle - f^*(a) - \|x^{\star}\|\left\|a + A^Ty\right\|
\end{align*}
Thus, 
\begin{align*}
    f(x) - f^{\star} &\leq |f(x) + f^*(a) + \langle b, y \rangle | + \|x^{\star}\|\left\|a + A^Ty\right\| \\ 
    &\stackrel{(\ref{eqn:PDG})}{\leq} \left(1 + \|x^{\star}\|\right) \sqrt{\D(z) } \\ 
    &\leq ( 1 + \|x\| + \|x - x^{\star}\|) \sqrt{\D(z)} \\
    &\stackrel{(\ref{eqn:QEBSG})}{\leq} \left(1 + \|x\| + \sqrt{\frac{2\G(z)}{\eta}}\right) \sqrt{\D(z)} \\ 
    &\stackrel{(\ref{eqn:SDG-PDG})}{\leq} \left( 1 + \|x\| + \sqrt{\frac{2}{\eta}} \sqrt{ (1 + \|x\| + \|y\|) \sqrt{\D(z)} + \frac{1}{2\beta_{\min}}\D(z)} \right) \sqrt{\D(z)} \smartqed
\end{align*}
\end{proof}
Our latest finding establishes the upper bound of the \acrlong{pdg} in terms of the \acrlong{sdg}. This last part is quite technical and incorporates manifold and convex optimization concepts. Therefore, we outline the main result here, deferring the detailed proofs to Appendix \ref{appendix:PDG in terms of SDG}.
\begin{theorem} \label{thm:PDG-SDG -- manifold}
Given $\beta = (\beta_x, \beta_y) \in (0, +\infty)^2$, $z \in \Z$, and a function $f \in \Gamma_0(\X)$. Then, under the following set of assumptions (we denote it $\mathcal{E}$):
\begin{itemize}
    \item The Fenchel-Conjugate of the objective function, $f^*$, could be written in a separable way:
    \begin{equation*}  f^*(\mu) = f_1^*(\mu_1) + f_2^*(\mu_2), \hspace{1.5cm} \mu \in \X \end{equation*}
    \item $f_1^*$ is $L_{f_1^*}-$Lipschitz on its domain, $\dom f_1^*$. 
    \item The domain of $f^*_2$ is a non-empty affine space. 
    \item Let $\mu_0 \in \dom f_2^*$, then $\forall \mu_2 \in\dom f_2^*$, we define 
    \begin{equation*}  g(\lambda) = f_2^*\left(\mu_0 + \phi^{-1}(\lambda)\right) = f_2^*(\mu_2)\end{equation*} where $\phi$ is the diffeomorphism defined in Lemma \ref{lem:diffeom of affine space} (eqn:\ref{eqn:phi}) in Appendix \ref{appendix:PDG in terms of SDG}. 
    \item The function $g$ is differentiable and has an $L_g-$Lipschitz gradient. 
\end{itemize}
The \acrlong{pdg} and the \acrlong{sdg} defined, respectively, in (\ref{eqn:PDG}) and (\ref{eqn:SDG}) satisfy: 
\begin{align}\label{eqn:PDG<SDG -- manifold}
    \D(z) &\leq \begin{multlined}[t] \left(\left(3 + \beta_x L_g\right)\G(z) + \left(\sqrt{2\beta_x}\left(2\|x\| + L_{f_1^*}\right)+ \sqrt{2\beta_y}\|y\|\right)\sqrt{\G(z)}\right)^2 \\ + 2\beta_{\max} \G(z)\end{multlined} \\ 
    a &= \proj_{\dom f^*} \left(-A^Ty\right)  \hspace{2cm} p= \prox_{\beta_x^{-1}f}\left(x - \frac{1}{\beta_x} A^Ty\right) 
\end{align}
\end{theorem}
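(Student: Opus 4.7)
The plan is to bound each of the three summands in $\D(z) = |f(x) + f^*(a) + \langle b, y\rangle|^2 + \|a + A^T y\|^2 + \|Ax - b\|^2$ separately by a polynomial in $\sqrt{\G(z)}$. The two norm-squared summands together produce the $2\beta_{\max}\G(z)$ term. For $\|Ax-b\|^2$ I would invoke Lemma~\ref{lem:G lower bound} directly, giving $\|Ax-b\|^2 \le 2\beta_y \G(z)$. For $\|a + A^T y\|^2$, the key observation is that by Fermat's rule (Lemma~\ref{lem:p and fermat}) the vector $b_1 := \beta_x(x-p) - A^T y$ lies in $\partial f(p)$, so Proposition~\ref{prop: f and f* sub-diff} places $b_1$ in $\dom f^*$; since $a = \proj_{\dom f^*}(-A^T y)$, the projection inequality (\ref{eqn:proj - norms}) yields $\|a + A^T y\| \le \|b_1 + A^T y\| = \beta_x\|x-p\| \le \sqrt{2\beta_x \G(z)}$.

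The absolute-value term requires more work. I would exploit the Fenchel--Young equality $f(p) + f^*(b_1) = \langle b_1, p\rangle$ (Proposition~\ref{prop:Fenchel-Young} applied to the subgradient pair $(p,b_1)$) together with the identity $f(x) - f(p) = \G(z) - \langle A(x-p), y\rangle + \tfrac{\beta_x}{2}\|p-x\|^2 - \tfrac{1}{2\beta_y}\|Ax-b\|^2$ read off from Proposition~\ref{props:SDG}. Writing $f(x) + f^*(a) + \langle b, y\rangle = [f(x) - f(p)] + \langle b_1, p\rangle + [f^*(a) - f^*(b_1)] + \langle b, y\rangle$, substituting, and expanding $b_1 = \beta_x(x-p) - A^T y$, the linear-in-$y$ terms collapse to $-\langle Ax - b, y\rangle$, leaving
\begin{equation*}
f(x) + f^*(a) + \langle b, y\rangle = \G(z) - \tfrac{\beta_x}{2}\|x-p\|^2 + \beta_x\langle x-p, x\rangle - \tfrac{1}{2\beta_y}\|Ax-b\|^2 - \langle Ax-b, y\rangle + \bigl[f^*(a) - f^*(b_1)\bigr].
\end{equation*}
Taking absolute values, applying Cauchy--Schwarz, and re-using Lemma~\ref{lem:G lower bound} to convert $\|x-p\|$ and $\|Ax-b\|$ into $\sqrt{\G(z)}$ yields, modulo the residual $|f^*(a) - f^*(b_1)|$, a bound of the expected form $(\text{const})\cdot\G(z) + \bigl(\sqrt{2\beta_x}\|x\| + \sqrt{2\beta_y}\|y\|\bigr)\sqrt{\G(z)}$.

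The main obstacle is controlling the residual $|f^*(a) - f^*(b_1)|$ using the assumption set $\mathcal{E}$. Separability splits it as $|f_1^*(a_1) - f_1^*(b_{1,1})| + |f_2^*(a_2) - f_2^*(b_{1,2})|$; compatibility of the projection with the product structure gives $\|a_i - b_{1,i}\| \le \|a - b_1\|$, and the projection characterization (\ref{eqn:proj - inner}) combined with Pythagoras gives $\|a - b_1\| \le \|b_1 + A^T y\| = \beta_x\|x-p\|$. The first piece is then bounded by $L_{f_1^*} \sqrt{2\beta_x \G(z)}$. For the second, I would transfer to coordinates on the affine space $\dom f_2^*$ through the diffeomorphism $\phi$ of Lemma~\ref{lem:diffeom of affine space}, write $g(\lambda) = f_2^*(\mu_0 + \phi^{-1}(\lambda))$, and apply the descent lemma for $L_g$-Lipschitz-gradient functions to obtain $|g(\lambda^a) - g(\lambda^{b_1})| \le \|\nabla g(\lambda^{b_1})\|\|\lambda^a - \lambda^{b_1}\| + \tfrac{L_g}{2}\|\lambda^a - \lambda^{b_1}\|^2$. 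The delicate step here --- and the reason the detailed argument is deferred to Appendix~\ref{appendix:PDG in terms of SDG} --- is relating $\|\nabla g(\lambda^{b_1})\|$ back to the ambient subgradient $p_2 \in \partial f_2^*(b_{1,2})$ coming from $p \in \partial f^*(b_1)$ (bounded by $\|p\| \le \|x\| + \|x-p\|$) and verifying that $\phi$ preserves the relevant distances, so that the quadratic term produces $\beta_x L_g \G(z)$ and the linear term absorbs into $\sqrt{2\beta_x}(\|x\| + L_{f_1^*})\sqrt{\G(z)}$. Collecting all the pieces and squaring yields the stated bound.
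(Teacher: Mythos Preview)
Your strategy is essentially the paper's: introduce the subgradient point $b_1=\beta_x(x-p)-A^Ty\in\partial f(p)$ (the paper calls it $\tilde a$), use Fenchel--Young equality at $(p,b_1)$, bound $\|a+A^Ty\|$ and $\|a-b_1\|$ by $\beta_x\|x-p\|$ via the projection properties, and control $f^*(a)-f^*(b_1)$ through separability, Lipschitzness of $f_1^*$, and the descent inequality for $g$ in the affine coordinates of Lemma~\ref{lem:diffeom of affine space}. Your identity for $f(x)+f^*(a)+\langle b,y\rangle$ is exactly equivalent to the equality hidden in the paper's Lemma~\ref{lem:PDG-SDG:Approx DG upper bound}.

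Where you diverge is in the bookkeeping, and as written your sketch does not recover the constant $3+\beta_x L_g$. Two points. First, the paper does \emph{not} take absolute values of an identity; it proves a one-sided upper bound (dropping the nonpositive summands $-\tfrac{\beta_x}{2}\|x-p\|^2-\tfrac{1}{2\beta_y}\|Ax-b\|^2$) and a separate, much cheaper lower bound via the Fenchel--Young \emph{inequality} (Lemma~\ref{lem:PDG-SDG:DG lower bound}), which carries no $\G(z)$ term at all. Your triangle-inequality treatment of the identity pays an extra $\G(z)$ for those negative pieces. Second, your claim that ``the linear term absorbs into $\sqrt{2\beta_x}(\|x\|+L_{f_1^*})\sqrt{\G(z)}$'' is not quite right: bounding $\|\nabla g(\lambda^{b_1})\|\le\|p_2\|\le\|x\|+\|x-p\|$ and multiplying by $\|\lambda^a-\lambda^{b_1}\|\le\beta_x\|x-p\|$ produces $\sqrt{2\beta_x}\|x\|\sqrt{\G(z)}+\beta_x\|x-p\|^2$, and the last term is an additional $2\G(z)$ you have not accounted for. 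The paper avoids passing to norms here: it keeps the one-sided first-order term $\langle p_2,a_2-\tilde a_2\rangle$ as an inner product and combines it with $-\langle x_2,a_2-\tilde a_2\rangle$ (arising after adding and subtracting $\langle x,\tilde a\rangle$, $\langle x,a\rangle$) to obtain $\langle p_2-x_2,a_2-\tilde a_2\rangle\le\|p-x\|\cdot\|a-\tilde a\|\le 2\G(z)$ directly. With your accounting you land at $(4+\beta_x L_g)\G(z)$; to hit $3+\beta_x L_g$ you need both the asymmetric upper/lower treatment and to keep that inner product intact long enough to exploit the $p_2-x_2$ cancellation.
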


Next, we will illustrate our theoretical findings through numerical experiments. The directed graph depicted in Figure \ref{fig:results summary} summarizes our theoretical findings and highlights their main assumptions.
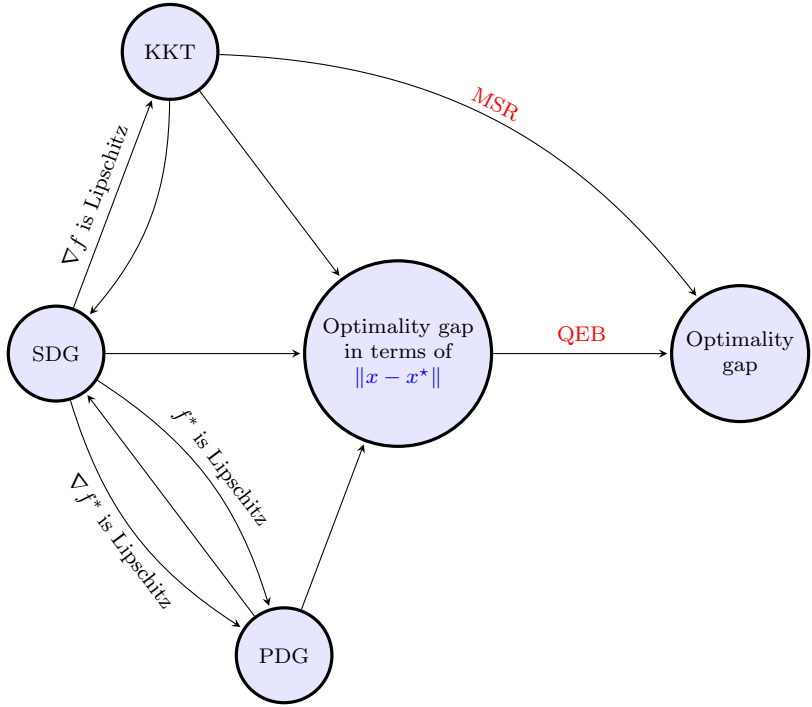
\begin{figure}[htbp]
    \centering
    \begin{tikzpicture}%
  [>=stealth,
   shorten >=1pt,
   node distance=4cm,
   on grid,
   auto,
   every state/.style={draw=black, fill=blue!10, very thick, align=center},
  ]
\node[state] (x_star) at (0, 0) {Optimality gap \\ in terms of \\ {\color{blue}$\|x - x^{\star}\|$}};
\node[state] (OG) [right=4.5cm of x_star] {Optimality \\ gap};
\node[state, text width = 1cm] (kkt) [above left=4cm and 3cm of x_star] {KKT};
\node[state, text width = 1cm] (pdg) [below left=4cm and 1.5cm of x_star] {PDG}; 
\node[state, text width = 1cm] (sdg) [left=4.5cm of x_star] {SDG};

\path[->]
   (sdg)      edge                  node[sloped]        {$\nabla f$ is Lipschitz}   (kkt) 
              edge[bend left=20]    node[sloped]        {$f^*$ is Lipschitz}        (pdg)
              edge[bend right=20]   node[sloped, swap]  {$\nabla f^*$ is Lipschitz} (pdg)
              edge                  node                {}                          (x_star)
   (kkt)      edge                  node                {}                          (x_star)
              edge[bend left=20]    node                {}                          (sdg)
              edge[bend left=25]    node[sloped]        {\acrshort{msr}}            (OG)
   (pdg)      edge                  node[sloped]        {}                          (sdg) 
              edge                  node                {}                          (x_star)
   (x_star)   edge                  node[sloped]        {\acrshort{qeb}}            (OG) 
   ;
\end{tikzpicture}
    \caption{Summary of our findings, the representation: $\mathcal{M}_1 \xlongrightarrow{\mathcal{S}} \mathcal{M}_2$ means that the measure $(\mathcal{M}_1)$ provides an upper-bound for the measure $\mathcal{M}_2$ under the assumption $\mathcal{S}$}
    \label{fig:results summary}
\end{figure}

\section{Numerical experiments} \label{sec:numerical exps}

In the last section, we present a series of numerical experiments aimed at illustrating and validating our findings and their tightness. Initially, we investigate linearly-constrained least-squares problems where the objective is convex and smooth. Subsequently, we extend our findings beyond affine equality constraints, demonstrating their applicability to more generalized scenarios involving inequality constraints. Finally, we explore a non-smooth problem that showcases the superior stability of the smoothed duality gap compared to the Karush-Kuhn-Tucker error.

One could observe that while the measures of optimality themselves are not complex to implement programmatically, each one necessitates specific information that may be tricky. For example, computing the sub-differential of the objective function, crucial for the \acrlong{kkt}, can be exceptionally complicated for certain functions. Similarly, while the \acrlong{sdg} prefers an easily computable proximal operator of the objective, the \acrlong{pdg} favors an objective with a tractable Fenchel-conjugate, allowing projection onto its domain. Moreover, our results rely on additional assumptions such as the \acrlong{msrsdl} and \acrlong{qebsg}, requiring the determination of associated constants, a task often non-trivial in itself. Hence, for each experiment, we will provide all non-trivial computations along with detailed steps wherever necessary.

\textbf{Primal-Dual Hybrid Gradient:} we implement the \textit{Primal-Dual Hybrid Gradient} (PDHG) algorithm \cite{PDHG,PDHG1,PDHG2,PDHG3} to tackle our experiments. We chose it because it's famous for its efficiency in handling large-scale problems due to its low cost per iteration, the PDHG algorithm enjoys linear convergence on the problems under consideration, which makes it more insightful for showing the numerical performance of the measures we consider. Additionally, it provides primal-dual solutions at each iteration.

\begin{table}[!ht]
\centering
\begin{tabular}{||c||}
\hline 
\multicolumn{1}{||l||}{\textbf{Algorithm:} Primal-Dual Hybrid Gradient (PDHG)} \\
\hline \hline
{$\!\begin{aligned} 
\bar{x}_{k+1} &= \prox_{\tau f}\left(x_k - \tau A^Ty_k\right) \\
\bar{y}_{k+1} &= y_k + \sigma \left(A\bar{x}_{k+1} - b\right)\\
x_{k+1} &= \bar{x}_{k+1} - \tau A^T\left(\bar{y}_{k+1} - y_k\right)  \\
y_{k+1} &= \bar{y}_{k+1} \\
\end{aligned}$} \\
  \hline 
\end{tabular}
\end{table}

The convergence of the algorithm is guaranteed when the step sizes $\tau$ and $\sigma$ satisfy the inequality: 
\begin{equation*}
    \tau \sigma \|A\|^2 < 1 
\end{equation*}
Therefore, we choose the following step sizes: 
\begin{align*}
    \tau = \frac{0.95}{\|A\|} && \sigma = \frac{1}{\|A\|}
\end{align*}
\textbf{Smoothing parameter selection:} to determine the smoothing parameter $\beta \in ]0, +\infty[^2$, we adopt the following approach:
\begin{itemize}
    \item We set the same primal-dual smoothing parameters, i.e., $\beta_x = \beta_y$.
    \item We construct a predefined geometric list of values for $\beta$, denoted by $\mathcal{I}$, with a total of 41 values. This list includes the \acrlong{fe} at the current iteration $\|Ax_k - b\|$, along with 40 equally-divided values in logarithmic scale ranging from $10^{-8}$ to 100:
    \begin{equation*}
    \mathcal{I} := \left\{10^{-8}, 1.91 \times 10^{-7}, \dots, 5.25 \times 10^{-1}, 100, \|Ax - b\|\right\}
    \end{equation*}
    \item Then, for each iteration $k \in \mathbb{N}$, we assign a value for $\beta$ from $\mathcal{I}$ according to the following criteria:
    \begin{itemize}
        \item For results of the form: $g(z_k) \leq h_{\beta}(z_k)$, we choose: 
        \begin{equation*}
            \Tilde{\beta} = \arg\min_{\beta \in \mathcal{I}} h_{\beta}(z_k) 
        \end{equation*}
        \item For results of the form: $g_{\beta}(z_k) \leq h_{\beta}(z_k)$, we choose:
        \begin{equation*}
            \Tilde{\beta} = \arg\min_{\beta \in \mathcal{I}} \frac{h_{\beta}(z_k)}{g_{\beta}(z_k)}
        \end{equation*}
    \end{itemize}
\end{itemize}
\textbf{Computer configurations:} our numerical results were generated on macOS using an Apple M1 Pro processor with 16 GB of RAM. We employed Python 3.10.9 software for the computations.

\subsection{Linearly-Constrained Least-Squares (LC-LS)}

We are interested in illustrating several instances of the following Least-Squares problem \cite{LCLS1,LCLS}: 
\begin{equation} \tag{LC-LS} \label{eqn: LS -- prob}
\begin{split}
    \min_{x \in \R^n} ~  \frac{1}{2} \|Qx - c\|^2   \hspace{1cm} \text{subject to} \hspace{1cm} Ax = b
\end{split}
\end{equation}
or, equivalently, the associated saddle point problem: 
\begin{equation*}
    \min_{x \in \R^n} \max_{y \in \R^m} \Lag(x, y) = \frac{1}{2} \|Qx - c\|^2 + \langle Ax - b, y \rangle 
\end{equation*}
where the objective function $f(x) = \frac{1}{2} \|Qx - c\|^2$ is smooth $\left( \nabla f(x) = Q^T\left(Qx - c\right) \right)$, and convex $\left( \nabla^2 f(x) =  Q^TQ \succcurlyeq 0 \right)$. The matrix $A \in \R^{m \times n}$ and the vectors $c ~\& ~ b \in \R^m$ are given. Before getting started with the instances, let's analyze the problem. 

\subsubsection{Problem Analysis}
\begin{enumerate}
    \item \textbf{The gradient}, $\nabla f$, is $L-$Lipschitz with $L = \left\|Q^TQ\right\|_{op} = \lambda_{\max}\left(Q^TQ\right)$
    \item \textbf{The proximal operator is:} 
    \begin{align*}
        \bar{u} = \prox_{sf}(x) &= \arg\min_{u \in \R^n} \frac{1}{2} \left\|Qu - c\right\|^2 + \frac{1}{2s} \|u - x\|^2 \\ 
        &\iff 0 \in \partial\left(\frac{1}{2} \left\|Q . - c\right\|^2 + \frac{1}{2} \|. - x\|^2 \right)(\bar{u}) \\ 
        &\iff \bar{u} = \left(Q^TQ + \frac{1}{s} \Id \right)^{-1} \left(Q^Tc + \frac{1}{s}x\right) \stepcounter{equation}\tag{\theequation}\label{eqn: LS -- prox}
    \end{align*}
    \item \textbf{The Fenchel-Conjugate is}: $\displaystyle f^*(\mu) = \sup_{x \in \R^n} \langle \mu, x \rangle - \frac{1}{2} \left\|Qx - c\right\|^2$, let: 
    \begin{align*}
        \bar{x} &= \arg\max_{x \in \R^n} \langle \mu, x \rangle - \frac{1}{2} \left\|Qx - c\right\|^2 \\
            &\iff 0 \in \partial\left(\langle \mu, . \rangle - \frac{1}{2} \left\|Q . - c\right\|^2 \right)(\bar{x}) \\ 
            &\iff Q^TQ\bar{x} = Q^Tc + \mu
    \end{align*}
    There are two cases: 
    \begin{itemize}
        \item If $\mu \notin \Ran\left(Q^T\right)$, which means that $\mu = \mu_1 + \mu_2$ such that $\mu_1 \in \Ran\left(Q^T\right)$ and $\mu_2 \in \ker(Q)$, then one can find a maximizing sequence defined as: 
        \begin{equation*}
            x_n = n \mu_2, \hspace{2cm} \forall n \in \mathbb{N}
        \end{equation*}
        Then, 
        \begin{align*}            
        f^*(x_n) &= \sup_{n \in \mathbb{N}}~ \langle \mu, x_n \rangle - \frac{1}{2} \left\|Qx_n - c \right\|^2 \\ 
        &= \sup_{n \in \mathbb{N}}~ n\|\mu_2\|^2 - \frac{1}{2}\|c\|^2 \xrightarrow[n \rightarrow +\infty]{} +\infty
        \end{align*}
        \item If $\mu \in \Ran\left(Q^T\right)$, then 
        \begin{equation*}
            \bar{x} = \left(Q^TQ\right)^{\dag}\left( Q^Tc + \mu\right)
        \end{equation*}
        and $f^*$ could be simplified to get: 
        \begin{equation*}
            f^*(\mu) = \frac{1}{2}\left\langle \mu,  \left(Q^TQ\right)^{\dag} \mu \right\rangle + \left\langle \mu,  \left(Q^TQ\right)^{\dag}  Q^Tc \right\rangle -\frac{1}{2} \left\|Q \left(Q^TQ\right)^{\dag} Q^Tc - c\right\|^2 
        \end{equation*}
    \end{itemize}
    Thus, 
    \begin{equation} \label{eqn: LS -- f*}
        f^*(\mu) = \begin{multlined}[t] \frac{1}{2}\left\langle \mu,  \left(Q^TQ\right)^{\dag} \mu \right\rangle + \left\langle \mu,  \left(Q^TQ\right)^{\dag}  Q^Tc \right\rangle -\frac{1}{2} \left\|Q \left(Q^TQ\right)^{\dag} Q^Tc - c\right\|^2  \\+ \imath_{\Ran\left(Q^T\right)}(\mu) \end{multlined}\\
    \end{equation}
    \item Rewriting $f^*(\mu) = f_1^*(\mu)$ and $f_2^* = 0$ satisfies the set of assumptions, $\mathcal{E}$, of Theorem \ref{thm:PDG-SDG -- manifold}. Thus, we define: 
    \begin{equation*}
    \begin{split}
        g(\lambda) &= f^*(\mu_0 + \phi^{-1}(\lambda)) \\ 
        &= \frac{1}{2}\left\langle \mu,  \left(Q^TQ\right)^{\dag} \mu \right\rangle + \left\langle \mu,  \left(Q^TQ\right)^{\dag}  Q^Tc \right\rangle -\frac{1}{2} \left\|Q \left(Q^TQ\right)^{\dag} Q^Tc - c\right\|^2  \\ 
        \lambda &= \phi(\mu) \hspace{2cm} \forall \mu \in \dom f^*
    \end{split}
    \end{equation*}
    Hence, $g$ is \textbf{differentiable} with \textbf{Lipschitz constant} $$L_g = \left\|\left(Q^TQ\right)^{\dag}\right\|_{op} = \lambda_{\max}\left(\left(Q^TQ\right)^{\dag}\right)$$
    \item \textbf{Projection} onto $\dom f^* = \Ran\left(Q^T\right)$
    \begin{align*}
        \proj_{ \Ran\left(Q^T\right)} (\mu) &= \arg\min_{u \in \Ran\left(Q^T\right)} \|u - \mu\|^2 \\
        & \stackrel{u = Q^Tv}{=} Q^T \arg\min_{v \in \R^m} \left\|Q^Tv - \mu \right\|^2 \\
        & = Q^T \left(QQ^T\right)^{\dag} Q \mu \\
    \end{align*}
\end{enumerate}
We will show in the sequel that finding the values of the \acrshort{qebsg} and \acrshort{msr} constants $\eta$ and $\gamma$, respectively, is tractable for this problem. 
\begin{enumerate}
  \setcounter{enumi}{5}
  \item \textbf{Metric sub-regularity constant}, $\gamma$. 
\begin{lemma} \label{lem:LC-LS--MSR}
    For the \ref{eqn: LS -- prob} problem, let $z^{\star} = \proj_{\Z^{\star}}(z)$. Then, for any $z \in \Z$ the Lagrangian's sub-differential satisfies:
    \begin{align}
        \left\|\partial_x \Lag(x, y)\right\| + \left\|\nabla_y \Lag(x, y)\right\| &\geq \left|\lambda(\mathcal{M})\right|_{\min} \dist\left(z, \Z^{\star}\right) \\
        \mathcal{M} &= \begin{bmatrix} Q^TQ & A^T \\ A & 0\end{bmatrix}
    \end{align}
    where $\left|\lambda\left(\mathcal{M}\right)\right|_{\min}$ is the smallest absolute value of the non-zero eigenvalues of $\mathcal{M}$.
    
\textit{Proof.}  See Appendix \ref{appendix:MSR et QEB, LC-LS}
\end{lemma}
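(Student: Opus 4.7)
The plan is to reduce the inequality to a linear-algebra estimate on the KKT matrix $\mathcal{M}$. First I would compute the Lagrangian's subdifferential explicitly: since $f(x) = \tfrac12\|Qx - c\|^2$ is smooth, $\nabla_x\Lag(x,y) = Q^TQx + A^Ty - Q^Tc$ and $\nabla_y\Lag(x,y) = Ax - b$. The saddle points of $\Lag$ are exactly those $z^\star$ satisfying $Q^TQx^\star + A^Ty^\star = Q^Tc$ and $Ax^\star = b$, i.e.\ the affine set characterized by $\mathcal{M}z^\star = \bigl(Q^Tc,\, b\bigr)^T$. Subtracting this identity from the gradient expressions gives the key identity
\[
\begin{bmatrix}\nabla_x\Lag(x,y) \\ \nabla_y\Lag(x,y)\end{bmatrix} = \mathcal{M}(z - z^\star)
\]
valid for any $z^\star \in \Z^\star$.

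Next I would exploit the affine structure of $\Z^\star$: it is a translate of $\ker\mathcal{M}$ (the preimage of a single point under a linear map), so the orthogonal projection $z^\star = \proj_{\Z^\star}(z)$ makes $z - z^\star \perp \ker\mathcal{M}$. Symmetry of $\mathcal{M}$ then yields $(\ker\mathcal{M})^\perp = \Ran(\mathcal{M})$, so $z - z^\star$ lies in the span of eigenvectors of $\mathcal{M}$ corresponding to non-zero eigenvalues. This is precisely the setting in which the Rayleigh-type estimate (\ref{eqn:Rayleigh}) applies, giving
\[
\|\mathcal{M}(z - z^\star)\| \geq |\lambda(\mathcal{M})|_{\min}\,\|z - z^\star\| = |\lambda(\mathcal{M})|_{\min}\,\dist(z, \Z^\star).
\]

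Finally, I would bridge the gap between the concatenated Euclidean norm on the left and the sum of the two separate norms appearing in the lemma's statement via the elementary inequality $\|u\| + \|v\| \geq \sqrt{\|u\|^2 + \|v\|^2}$, which together with the previous identity closes the argument. The only subtlety I anticipate is that (\ref{eqn:Rayleigh}) legitimately applies only to vectors orthogonal to $\ker\mathcal{M}$; the deliberate choice $z^\star = \proj_{\Z^\star}(z)$ rather than an arbitrary optimal point is exactly what secures this hypothesis, and explains why the lemma is formulated with this specific projection. Everything else is routine once the KKT characterization of $\Z^\star$ and the symmetry of $\mathcal{M}$ are in hand.
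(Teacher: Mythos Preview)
Your proposal is correct and follows essentially the same route as the paper: compute $\nabla\Lag$ explicitly, subtract the KKT identity at $z^\star$ to write the gradient as $\mathcal{M}(z-z^\star)$, apply the Rayleigh-type bound (\ref{eqn:Rayleigh}), and finish with $\|u\|+\|v\|\geq\|(u,v)\|$. If anything, you are more careful than the paper's proof, which invokes (\ref{eqn:Rayleigh}) directly without spelling out that $z-z^\star\perp\ker\mathcal{M}$; your observation that the choice $z^\star=\proj_{\Z^\star}(z)$ is precisely what makes this orthogonality hold (and hence what makes the Rayleigh estimate legitimate on a possibly singular $\mathcal{M}$) is the one point the paper leaves implicit.
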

Thanks to this Lemma, one can take $\gamma = \left|\lambda\left(\mathcal{M}\right)\right|_{\min}$
\item \textbf{Quadratic error bound of the smoothed gap constant}, $\eta$. 
\begin{lemma} \label{lem:LC-LS--QEB}
    For the \ref{eqn: LS -- prob} problem, the self-centered \acrlong{sdg} could be reformulated into a quadratic form. That is, for any $z \in \Z$:
    \begin{equation}
        \G(z) = z^T\mathcal{H} z + \langle z, v \rangle + cst 
    \end{equation}
    where, 
    \begin{align}
       v_{(n + m)} = \begin{bmatrix} v_x \\ v_y\end{bmatrix} && \mathcal{H}_{{(n+m)\times(n+m)}} = \begin{bmatrix}
            M_{xx}& \frac{1}{2} M_{xy} \\ \frac{1}{2} M_{xy}^T & M_{yy}
        \end{bmatrix}
    \end{align}    
\begin{table}[!ht]
    \centering
    \begin{tabular}{||l|c||} 
    \hline
    \multicolumn{1}{||c}{\textbf{Vector - Matrix}} & \multicolumn{1}{|c||}{\textbf{Size}}  \\
    \hline \hline 
    $v_x = \frac{\beta_x}{\tau}B^{-1}Q^Tc -Q^Tc$ &  $n \times 1$ \\
    $v_y = -AB^{-1}Q^Tc$ & $m \times 1$ \\
    $B = Q^TQ + \frac{\beta_x}{\tau} \Id_{n}$ & $n \times n$  \\
    $M_{xx} = \frac{1}{2}Q^TQ + \frac{\sigma}{2\beta_y} A^TA + \frac{\beta_x^2}{2\tau^2} B^{-1} - \frac{\beta_x}{2\tau} \Id_{n}$ & $n \times n$ \\
     $M_{xy} = A^T - \frac{\beta_x}{\tau} B^{-1}A^T$ & $n \times m$\\
     $M_{yy} = \frac{1}{2} AB^{-1}A^T$ & $m \times m$ \\
      \hline 
    \end{tabular}
    \label{tab:LS-eta}
\end{table}

\textit{Proof.}  See Appendix \ref{appendix:MSR et QEB, LC-LS}
\end{lemma}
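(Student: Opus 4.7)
The plan is to substitute the quadratic $f(x) = \tfrac12\|Qx - c\|^2$ into the closed-form SDG from Proposition~\ref{props:SDG} and exploit the fact that, for a quadratic $f$, the proximal point $p$ is itself an affine function of $z = (x,y)$. Using the explicit prox formula (\ref{eqn: LS -- prox}), one obtains
$$p \;=\; B^{-1}\bigl(Q^Tc + \tfrac{\beta_x}{\tau} x - A^Ty\bigr), \qquad B \;=\; Q^TQ + \tfrac{\beta_x}{\tau}\Id_n,$$
matching the convention of the announced table. Since $p$ depends affinely on $z$, substituting it back into the five pieces $f(x),\, -f(p),\, \langle A(x-p), y\rangle,\, -\tfrac{\beta_x}{2}\|p-x\|^2,\, \tfrac{1}{2\beta_y}\|Ax-b\|^2$ yields an expression that is at most quadratic in $z$, so the form $\G(z) = z^T\mathcal{H}z + \langle z, v\rangle + \mathrm{cst}$ must hold.

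I would then process the five contributions separately. The piece $\tfrac12\|Qx-c\|^2$ contributes $\tfrac12 Q^TQ$ to $M_{xx}$, $-Q^Tc$ to $v_x$, and a constant $\tfrac12\|c\|^2$. The feasibility penalty $\tfrac{1}{2\beta_y}\|Ax-b\|^2$ adds an $A^TA$-term to $M_{xx}$ with the $\sigma$-weight that appears in the table and contributes a linear $-\tfrac{1}{\beta_y}A^Tb$ term, plus a constant. The three remaining terms involve $p$, and after substituting the affine expression for $p$ they expand into pure $x$-quadratics, pure $y$-quadratics, and $xy$-cross terms, which respectively populate $M_{xx}$ (the $\tfrac{\beta_x^2}{2\tau^2} B^{-1} - \tfrac{\beta_x}{2\tau}\Id_n$ corrections), $M_{yy}$ (the $\tfrac12 AB^{-1}A^T$ contribution), and $M_{xy}$ (the $A^T - \tfrac{\beta_x}{\tau} B^{-1}A^T$ contribution). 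Symmetrizing the bilinear block yields the announced off-diagonal entries $\tfrac12 M_{xy}$ and $\tfrac12 M_{xy}^T$, while the $c$-dependent shift in $p$ produces the $\tfrac{\beta_x}{\tau} B^{-1}Q^Tc$ part of $v_x$ and the $-AB^{-1}Q^Tc$ part of $v_y$.

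The main obstacle is algebraic bookkeeping rather than any conceptual difficulty. The single identity that drives every simplification is the resolvent relation
$$Q^TQ \, B^{-1} \;=\; \Id_n - \tfrac{\beta_x}{\tau} B^{-1},$$
which lets one rewrite $\|Qp-c\|^2$ and its cross terms using $B^{-1}$ alone, so that the blocks $M_{xx}, M_{xy}, M_{yy}$ emerge in exactly the claimed form. The rest is routine expansion and collection of coefficients, with occasional use of the symmetry of $B^{-1}$ (since $B$ is symmetric positive definite) to verify that the assembled $\mathcal{H}$ is indeed symmetric. I would defer the detailed arithmetic to Appendix~\ref{appendix:MSR et QEB, LC-LS} while emphasising the two structural ingredients above, namely the affine dependence of $p$ on $z$ and the $Q^TQ\,B^{-1}$ identity, as the skeleton of the proof.
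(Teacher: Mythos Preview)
Your proposal is correct and follows essentially the same route as the paper: compute the maximiser $p$ (the paper calls it $\tilde{x}$) explicitly as an affine function of $z$ via the prox formula, substitute it back into the SDG, expand, and collect the quadratic, bilinear, linear, and constant pieces, using the identity $Q^TQ + \tfrac{\beta_x}{\tau}\Id = B$ (equivalently your $Q^TQ\,B^{-1} = \Id - \tfrac{\beta_x}{\tau}B^{-1}$) to simplify the blocks. The only cosmetic difference is that the paper starts from the general SDG definition (eqn.~\ref{eqn:SDG general}) with the $\tau,\sigma$-weighted norms and re-derives the maximisers $\tilde{x},\tilde{y}$ from scratch, whereas you invoke the closed form of Proposition~\ref{props:SDG} directly; the subsequent algebra is identical.
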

The advantage of the lemma is that, for \ref{eqn: LS -- prob}, the self-centered smoothed duality gap is convex and $\lambda_{\min}(\mathcal{H})-$metrically sub-regular, with $\lambda_{\min}(\mathcal{H})$ being the smallest positive eigenvalue of the positive semi-definite matrix $\mathcal{H}$. Hence, by Proposition \ref{lem:MSR-->QEB}, we conclude that $\G(z)$ has a $\eta = \frac{\lambda_{\min}\left(\mathcal{H}\right)}{2}-$\acrshort{qeb}. 
\end{enumerate}

\subsubsection{Problem instances}
In this part, we explore various instances of the  \ref{eqn: LS -- prob} problem. We start by examining a straightforward one-dimensional case, for which we can determine the precise minimizer. Subsequently, we extend our analysis to a higher dimensional problem with independently and identically distributed (i.i.d.) Gaussian matrices $Q$ and $A$. Finally, we delve into a more complex scenario where we generate $Q$ and $A$ with non-trivial covariance matrices.
\begin{itemize}
    \item \textbf{One-dimensional} problem
    
\begin{equation} \tag{1D} \label{eqn: 1D -- prob}
    \begin{split}
        \min_{x \in \R} ~ \frac{1}{2} &\left(\frac{1}{9}x - 2\right)^2 \\ 
        &~ 9x = 7
    \end{split}
\end{equation}
The \textit{primal-feasibility} condition within the KKT conditions implies that $x^{\star} = \frac{7}{9}$. Consequently, in this particular problem, we can gain a more precise assessment of the quality of our approximations for the \acrlong{og}.
\item \textbf{I.I.D. Gaussian matrices:} We examine the \ref{eqn: LS -- prob} problem with dimensions set to $n = 20$ and $m = 10$. In this scenario, we generate independently and identically distributed (i.i.d.) Gaussian matrices $Q$ and $A$. That is:
\begin{equation} \tag{IIDG} \label{eqn:IIDG}
\begin{split}    
Q \in \R^{m \times n} ~\text{s.t.} ~\forall (i, j) \in \llbracket 1, m \rrbracket \times \llbracket 1, n \rrbracket, ~ Q_{ij} \sim \mathcal{N}(0, 1) ~ i.i.d. \\ 
    A \in \R^{m \times n} ~\text{s.t.} ~\forall (i, j) \in \llbracket 1, m \rrbracket \times \llbracket 1, n \rrbracket, ~ A_{ij} \sim \mathcal{N}(0, 1) ~ i.i.d. 
\end{split}
\end{equation}
\item \textbf{Non-trivial covariance:} We investigate the \ref{eqn: LS -- prob} problem with dimensions set to $n = 20$ and $m = 10$. In this instance, we generate matrices $Q$ and $A$ with non-trivial covariance matrices, defined as follows:
\begin{align} \tag{NTC} \label{eqn:NTC}
        A = \Sigma_a X_a && Q = \Sigma_q X_q 
\end{align}
where
\begin{itemize}
    \item The matrices $X_a, X_q \in \R^{m \times n}$ are Gaussian matrices, following the pattern established in the previous case.
    \item The matrices $\Sigma_a, \Sigma_q \in \R^{m \times m}$ serve as covariance matrices, generated using the Python built-in function \texttt{\href{https://docs.scipy.org/doc/scipy/reference/generated/scipy.linalg.toeplitz.html}{toeplitz}}.
\end{itemize}
\end{itemize}

\begin{figure}[htbp]
\centering
\subfloat[One-dimensional, $L \approx 0.012$]{\includegraphics[width=0.34\linewidth]{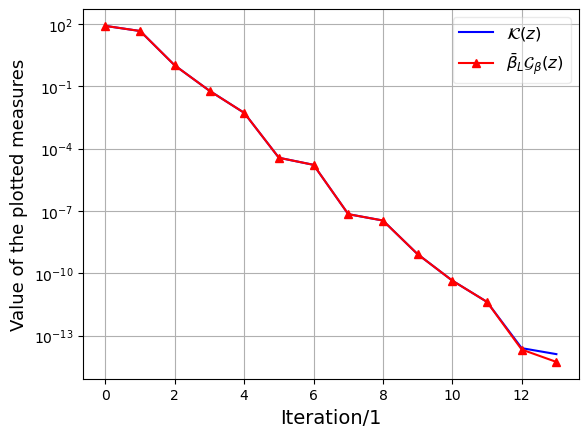} \label{fig:1D}}
\subfloat[I.I.D Gaussian $L \approx 49$]{ \includegraphics[width=0.34\linewidth]{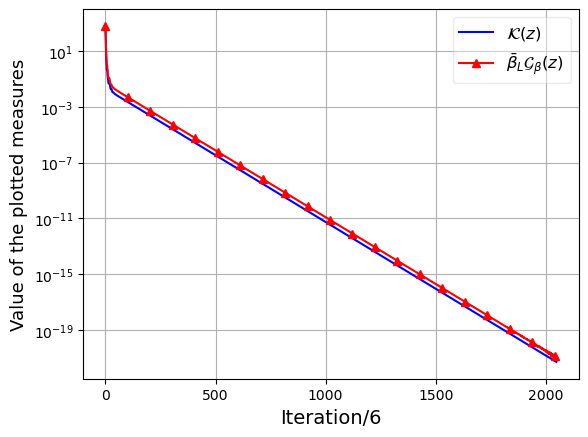} \label{fig:IID}}
\subfloat[Non-trivial covariance]{\includegraphics[width=0.34\linewidth]{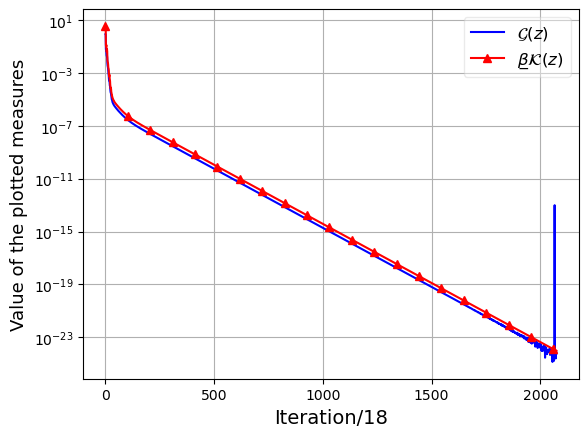} \label{fig:covariance}}
\caption{Numerical illustration of Theorems (\ref{thm:SDG-KKT} ~\&~ \ref{thm:KKT-SDG}) on several \ref{eqn: LS -- prob} instances. $L$ denotes the Lipschitz constant of the gradient}
\label{fig:LCLS}
\end{figure}

Figure \ref{fig:LCLS} validates our comparability bound between the \acrlong{sdg} and the \acrlong{kkt}, presented in subsection \ref{subseq:SDG-KKT}. It illustrates their efficiency and tightness across various instances. In addition, we note that the smoother the function (i.e. with smaller $L$) the tighter the bound as in (\ref{fig:1D} and \ref{fig:IID}).

\subsection{Distributed Optimization}
We examine the unconstrained optimization problem presented as follows \cite{DO}:
\begin{equation} \tag{UC-DO} \label{eqn: DO -- uncons}
    \min_{x \in \R^n} ~ \frac{1}{2} \sum_{i = 1}^M  \left\| Q_ix - c_i\right\|^2 
\end{equation}

In this formulation, for each $i \in \llbracket 1, M \rrbracket $, the matrix $Q_i \in \R^{m \times n}$ and the vector $c_i \in \R^m$ are real data sourced from the \texttt{\href{https://www.csie.ntu.edu.tw/~cjlin/libsvmtools/datasets/regression.html}{bodyfat}} dataset. This data comprises 252 data-points, each characterized by 14 features. In our case, we divide the dataset into $M = 3, n = 14$, and $m = 84$.

To address security concerns, we assume that the data $Q_i$ and $c_i$ are distributed across $M$ distinct computers. These computers collaborate to solve the problem in a manner where each computer $i$ utilizes the data $Q_i$ and $c_i$ to derive a partial solution, which is then transmitted to the subsequent computer $i+1$. Repeatedly until finding the optimum. To facilitate this collaborative approach, we need to reformulate the unconstrained problem (\ref{eqn: DO -- uncons}) into a constrained optimization problem. This involves introducing additional variables and constraints to model the communication process among the various computers.

\begin{equation*} 
\begin{split}
     \min_{x_1, \dots, x_M \in \R^{n}} ~ \frac{1}{2}& \sum_{i = 1}^M  \left\| Q_ix_i - c_i\right\|^2\\
       \text{s.t.}~~  & x_i = x_{i+1}, \hspace{1.5cm} \forall i \in \llbracket 1, M-1 \rrbracket
\end{split}
\end{equation*}

Furthermore, 

\begin{itemize}
    \item The constraints can be expressed in matrix form as $AX = 0$, where the matrix $A$ is constructed as follows:
\begin{align*}
    A_{(M-1)n \times Mn} := \begin{bmatrix}
        \Id_n & -\Id_n & \mathbf{0}_n & \dots & \mathbf{0}_n \\ 
        \mathbf{0}_n  & \Id_n & -\Id_n  & \dots & \mathbf{0}_n \\
        \vdots & \dots & \ddots & \ddots & \dots\\
        \mathbf{0}_n & \dots &  \mathbf{0}_n & \Id_n & -\Id_n 
    \end{bmatrix} \quad \text{and} \quad X_{Mn} := \begin{bmatrix}
        x_1 \\ \vdots \\ \vdots \\ x_M
    \end{bmatrix}
\end{align*}

\item  The objective function $\displaystyle \left(f(X) = \frac{1}{2}\sum_{i = 1}^M \left\|Q_i x_i - c_i \right\|^2\right)$ can be rewritten as:

\begin{equation*}
    f(X) = \frac{1}{2} \left\|\Qb X - \cb \right\|^2
\end{equation*}

Here, $\Qb$ is formed by arranging the matrices $Q_1, Q_2, \ldots, Q_M$ along the diagonal, and $\cb$ is a stacked vector of $c_1, c_2, \ldots, c_M$. That is:
    \begin{align*}
    \Qb_{Mm \times Mn} := \begin{bmatrix}
        Q_1 & \0_{m \times n} & \dots & \0_{m \times n} \\ \0_{m \times n} & Q_2 & \dots & \0_{m \times n} \\ \vdots & \dots & \ddots & \dots \\ \0_{m \times n} & \dots & \0_{m \times n} & Q_M
    \end{bmatrix} && \cb_{Mm} := \begin{bmatrix}
        c_1 \\ \vdots \\ c_M
    \end{bmatrix}
\end{align*}
\end{itemize}
Therefore, the unconstrained problem can be reformulated as an instance of the \ref{eqn: LS -- prob} problem as follows: 

\begin{equation} \tag{DO} \label{eqn: DO -- prob}
    \min_{X \in \R^{Mn}} ~ \frac{1}{2} \left\| \Qb X  - \cb \right\|^2 \hspace{1cm} \text{subject to} \hspace{1cm} AX = 0
\end{equation}

It is important to note that all the formulations of the problem are equivalent, specifically (\ref{eqn: DO -- uncons} $\equiv$ \ref{eqn: DO -- prob}). Consequently, since (\ref{eqn: DO -- uncons}) represents an unconstrained smooth problem, the minimizer $x^{\star}$ can be easily found:
\begin{align*}
    x^{\star} &= \arg\min_{x \in \R^n} \frac{1}{2} \sum_{i = 1}^M \left\|Q_ix - c_i\right\| ^2 \\
    &\iff 0 = \nabla \left(\frac{1}{2} \sum_{i = 1}^M \left\|Q_ix - c_i\right\|^2\right)(x^{\star}) \\ 
    &\iff \sum_{i = 1}^M Q^T_i\left(Q_ix^{\star} - c_i\right) = 0 \\
    &\iff \left(\sum_{i = 1}^M Q^T_iQ_i\right)x^{\star} = \sum_{i = 1}^M Q^T_i c_i \stepcounter{equation}\tag{\theequation}\label{eqn: DO -- x_star}
\end{align*}

\begin{figure}[htbp]
    \centering
    \includegraphics[width=0.8\linewidth]{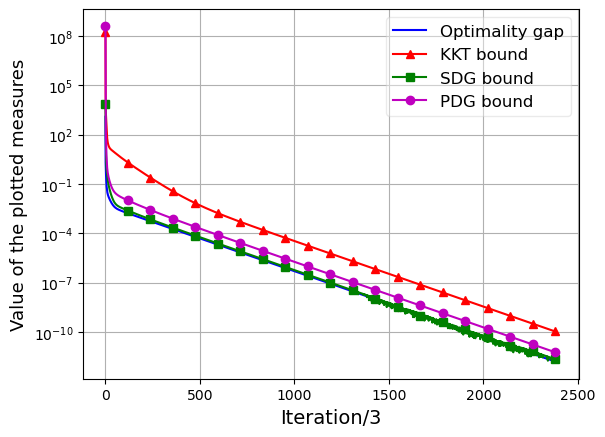}
    \caption{Numerical illustration of Theorems (\ref{thm:OG-KKT}, \ref{thm:OG-SDG}, and \ref{thm:OG-PDG}), \acrlong{og} vs. its bounds. Target: $\varepsilon = 10^{-10}$}
    \label{fig:DO-OG}
\end{figure}
Figure \ref{fig:DO-OG} validates our computable approximations for the \acrlong{og}, as presented in subsection \ref{subsec:final bounds}. Notably, in this experiment, we were able to precisely plot the \acrlong{og} due to our analytical knowledge of the minimizer (eqn:\ref{eqn: DO -- x_star}). Furthermore, the figure highlights the superior efficiency and tightness of the \acrshort{sdg} bound compared to the others, as it closely aligns with the \acrlong{og} curve.

\subsection{Quadratic Programming}
As we have seen, our theoretical findings have primarily focused on optimization problems under equality affine constraints. However, in this experiment, we aim to extend the applicability of our theoretical insights to address optimization problems that incorporate inequality constraints. Therefore, in this subsection, we delve into the same \ref{eqn: LS -- prob} problem as discussed earlier, but now incorporating the additional requirement of non-negativity constraints on the weights \cite{QP}.
\begin{equation} \tag{QP} \label{QP: initial prob}
\begin{split}
    \min_{x \in \R^n}  \frac{1}{2} & \|Qx - c\|^2 \\ 
    \text{s.t.} ~~ & Ax = b \\ 
     & \hspace{0.3cm} x \geq 0
\end{split}
\end{equation}
Applying our theoretical results to address this problem necessitates a reformulation to align it with our framework. A feasible approach involves encapsulating the non-negativity constraint by introducing an indicator function within the objective function. Thus, the reformulation is as follows:
\begin{equation*} 
\begin{split}
    \min_{x \in \R^n} \frac{1}{2} \|Qx - c\|^2 & + \imath_{\R_+^n}(x)\\ 
    \text{s.t.} ~~  Ax &= b
\end{split}
\end{equation*}
Despite the current appropriate formulation, determining the proximal operator of the objective function, essential for computing the \acrlong{sdg}, has become non-trivial. However, introducing an additional variable, $\Tilde{x}$, and incorporating an extra constraint, $x = \Tilde{x}$, would render the computation more manageable. That is:
\begin{equation*} 
\begin{split}
    \min_{x, \Tilde{x} \in \R^n}  \frac{1}{2}  \|Qx - c\|^2 &+ \imath_{\R_+^n}(\Tilde{x})\\ 
    \text{s.t.} \hspace{1.65cm}  Ax &= b \\ 
     x &= \Tilde{x}
\end{split}
\end{equation*}
Thanks to this trick, our objective function is now separable, facilitating the computation of its proximal operator. By combining the two constraints into a single matrix form, our problem can be expressed as follows:
\begin{equation}  \tag{PQP} \label{QP: prob}
\begin{split}
    \min_{X \in \R^{2n}}  F(X) = \frac{1}{2}  \|Qx - c\|^2 & + \imath_{\R_+^n}(\Tilde{x})\\ 
    \text{s.t.} \hspace{2.5cm}  \Tilde{A}X &= B 
\end{split}
\end{equation}
where 
\begin{align}
    X := \begin{bmatrix}
        x \\ \Tilde{x}
    \end{bmatrix} && \Tilde{A} := \begin{bmatrix}
        A & 0 \\ \Id & - \Id 
    \end{bmatrix} && B := \begin{bmatrix}
        b \\ 0
    \end{bmatrix}
\end{align}
\subsubsection{Problem Analysis}
\begin{enumerate}
    \item \textbf{The sub-differential of the objective function}
    \begin{align*}
        \partial F(x, \Tilde{x}) &= \partial \left( {\color{red} \frac{1}{2} \left\|Qx - c\right\|^2 } + {\color{blue}\imath_{\R^n_+}(\Tilde{x}) } \right)\\ 
        &\stackrel{(\ref{eqn:separable, sub-diff})}{=} {\color{red} \nabla \left(\frac{1}{2} \left\|Qx - c\right\|^2 \right)} \times \partial{\color{blue} \left(\imath_{\R^n_+}(\Tilde{x}) \right)} \\
        &= {\color{red}Q^T\left(Qx - c\right)} \times \partial{\color{blue} \left(\sum_{i = 1}^n \imath_{\R_+}(\Tilde{x}_i) \right)} \\ 
        &\stackrel{(\ref{eqn:separable, sub-diff})}{=} Q^T\left(Qx - c\right) \times {\color{blue}\prod_{i = 1}^n \partial \imath_{\R_+}(\Tilde{x}_i)} 
        \end{align*}
    Thus,
    \begin{equation}  \label{QP: sub-diff}
        \begin{split}
        \partial F(X)  = & \overbrace{\underbrace{Q^T\left(Qx - c\right)}_{n-\text{tuple}} \times \underbrace{\prod_{i = 1}^n \nor_{\R_+}(\Tilde{x}_i)}_{n-\text{tuple}}}^{2n-\text{tuple}}  \\ 
        \text{where} ~  & ~ \nor_{\R_+}(\nu) = \begin{cases}
            \emptyset, & \nu < 0 \\ \R_-, & \nu = 0 \\ \{0\}, & \nu > 0
        \end{cases}
        \end{split}
    \end{equation}
    \item \textbf{The stationarity part of the KKT error:}
    
    Let $J = \Tilde{A}^TY \in \R^{2n}$ and define: 
    \begin{align*}
        \underaccent{\bar}{J} := \{J_1, \dots, J_n\} && \Bar{J} := \{J_{n+1}, \dots, J_{2n}\}
    \end{align*}
    Then, 
    \begin{align*}
    \left\| \partial F(x, \Tilde{x}) + \Tilde{A}^TY \right\|_0^2 &=  \left\| \partial F(X) + J \right\|_0^2 \\
    &= \min\left(\left\{\|q\| ~|~ q \in \partial F(X) + J \right\}\right)^2\\
    &= \min \left(\left\{\left\|\left(\left(Q^T\left(Qx - c\right)\right) \times \prod_{i = 1}^n \nor_{\R_+}(\Tilde{x}_i)\right) + J \right\| \right\} \right)^2 \\
    &= \min \left\{\left\|\left(\left(Q^T\left(Qx - c\right)\right) \times \prod_{i = 1}^n \nor_{\R_+}(\Tilde{x}_i)\right) + J \right\|^2 \right\} \\
    &= \min \left\{{\color{red} \left\|Q^T\left(Qx - c\right) + \underaccent{\bar}{J} \right\|^2} + \left\| \prod_{i = 1}^n \nor_{\R_+}(\Tilde{x}_i) + \Bar{J} \right\|^2 \right\} \\
    &= {\color{red} \left\|Q^T\left(Qx - c\right) + \underaccent{\bar}{J} \right\|^2 } + \min \left\{\left\| \prod_{i = 1}^n \nor_{\R_+}(\Tilde{x}_i) + \Bar{J} \right\|^2 \right\}
\end{align*}
Now, let's analyze the second term: 
\begin{align*}
     \min \left\{\left\| \prod_{i = 1}^n \nor_{\R_+}(\Tilde{x}_i) + \Bar{J} \right\|^2 \right\} &=  \min \left\{\sum_{i = 1}^n \left(\nor_{\R_+}(\Tilde{x}_i) + \Bar{J}_i \right)^2 \right\} \\ 
     &= \sum_{i = 1}^n \min \left\{\left(\nor_{\R_+}(\Tilde{x}_i) + \Bar{J}_i \right)^2 \right\} \\ 
     &= \sum_{i = 1}^n \min \begin{cases}
         +\infty, & \Tilde{x}_i < 0 \\ \left(\R_- + \Bar{J}_i\right)^2, & \Tilde{x}_i = 0 \\ \Bar{J}_i^2, & \Tilde{x}_i > 0
     \end{cases} \\ 
     &= \sum_{i = 1}^n \begin{cases} 
         +\infty, & \Tilde{x}_i < 0 \\ \displaystyle \min_{u \in \R_-} \left(u + \Bar{J}_i\right)^2, & \Tilde{x}_i = 0 \\ \Bar{J}_i^2, & \Tilde{x}_i > 0
     \end{cases} \\ 
     &= \sum_{i = 1}^n \begin{cases}
         +\infty, & \Tilde{x}_i < 0 \\ \left(\proj_{\R_-}\left(-\Bar{J}_i\right)  + \Bar{J}_i\right)^2, & \Tilde{x}_i = 0 \\ \Bar{J}_i^2, & \Tilde{x}_i > 0
     \end{cases} \\ 
     &= \sum_{i = 1}^n \begin{cases}
         +\infty, & \Tilde{x}_i < 0 \\ \min\left(0, \Bar{J}_i\right)^2, & \Tilde{x}_i = 0 \\ \Bar{J}_i^2, & \Tilde{x}_i > 0
     \end{cases} \\ 
\end{align*}
    \item \textbf{The proximal operator is:} 
    \begin{align*}
        \prox_{sf}(x, \Tilde{x}) &= \arg\min_{u, \Tilde{u} \in \R^n} \underbrace{{\color{red} \frac{1}{2} \left\|Qu - c\right\|^2 + \frac{1}{2s} \|u - x\|^2}}_{h(u)} + {\color{blue} \imath_{\R^n_+}(\Tilde{u}) + \frac{1}{2s}\|\Tilde{u} - \Tilde{x}\|^2} \\ 
        &\stackrel{(\ref{eqn:separable, prox})}{=} \left({\color{red} \arg\min_{u \in \R^n} h(u)}, {\color{blue} \arg\min_{\Tilde{u} \in \R^n_+} \frac{1}{2s}\|\Tilde{u} - \Tilde{x}\|^2}\right) \\
        &\stackrel{(\ref{eqn: LS -- prox})}{=} \left({\color{red} \left(Q^TQ + \frac{1}{s} \Id \right)^{-1}\left(Q^Tc + \frac{1}{s}x\right)} , {\color{blue} \proj_{\R^n_+}(\Tilde{x})} \right) \\
        &= \left(\left(Q^TQ + \frac{1}{s} \Id \right)^{-1}\left(Q^Tc + \frac{1}{s}x\right), \max(\Tilde{x}, 0)) \right)
    \end{align*}
    \item \textbf{The Fenchel-Conjugate is:} 
    \begin{align*}
        f^*(\mu, \Tilde{\mu}) &= \sup_{X \in \R^{2n}} {\color{red} \langle \mu, x \rangle } {\color{blue} + \langle \Tilde{\mu}, \Tilde{x} \rangle} {\color{red} - \frac{1}{2} \left\|Qx - c \right\|^2} {\color{blue} - \imath_{\R^n_+}(\Tilde{x})} \\ 
        & \stackrel{(\ref{eqn:separable, sup})}{=}  \sup_{x \in \R^{n}} {\color{red} \left( \langle \mu, x \rangle - \frac{1}{2} \left\|Qx - c \right\|^2 \right)} + \sup_{\Tilde{x} \in \R^n} {\color{blue} \left(\langle \Tilde{\mu}, \Tilde{x} \rangle  - \imath_{\R^n_+}(\Tilde{x}) \right)} \\
        &\stackrel{(\ref{eqn: LS -- f*})}{=} \begin{multlined}[t] {\color{red} \frac{1}{2}\left\langle \mu,  \left(Q^TQ\right)^{\dag} \mu \right\rangle + \left\langle \mu,  \left(Q^TQ\right)^{\dag}  Q^Tc \right\rangle -\frac{1}{2} \left\|Q \left(Q^TQ\right)^{\dag} Q^Tc - c\right\|^2 } \\ + {\color{red} \imath_{\Ran\left(Q^T\right)}(\mu) } + {\color{blue} \imath_{\R^n_-} (\Tilde{\mu})} \end{multlined}
    \end{align*}
    \item Rewriting $f^*(\mu) = f_1^*(\Tilde{\mu}) + f_2^*(\mu)$ such that: 
        \begin{align*}
            f_1^*(\Tilde{\mu}) &=  \imath_{\R_-^n}(\Tilde{\mu}) \\
            f_2^*(\mu) &= \begin{multlined}[t] \frac{1}{2}\left\langle \mu,  \left(Q^TQ\right)^{\dag} \mu \right\rangle + \left\langle \mu,  \left(Q^TQ\right)^{\dag}  Q^Tc \right\rangle -\frac{1}{2} \left\|Q \left(Q^TQ\right)^{\dag} Q^Tc - c\right\|^2 \\ + \imath_{\Ran\left(Q^T\right)}(\mu) \end{multlined}
        \end{align*}
        satisfies the set of assumptions, $\mathcal{E},$ of Theorem \ref{thm:PDG-SDG -- manifold}. Thus, 
        \begin{itemize}
            \item $f_1^*$ is $L_{f_1^*} = 0-$Lipschitz on its domain, $\R_-^n$. 
            \item $\nabla f_2^*$ is $L_g =  \lambda_{\max}\left(\left(Q^TQ\right)^{\dag}\right)-$Lipschitz using the earlier analysis we did for \ref{eqn: LS -- prob}. 
        \end{itemize}
    \item \textbf{Projection onto the $\dom f^* = \Ran\left(Q^T\right) \times \R^n_-$} 
    \begin{equation*} \label{eqn: QP -- proj}
            \proj_{\dom f^*} (\mu) = \left({\color{red} Q^T\left(QQ^T\right)^{\dag} Q \mu},~ {\color{blue} \min\left(\Tilde{\mu}, 0\right)} \right)
    \end{equation*}
    \item While it is theoretically possible to determine the constants for the \acrlong{msrsdl} and the \acrlong{qebsg}, the process is highly complex. Due to the complexity involved, we opt for a practical approach by assigning arbitrary small values to these constants. In this experiment, we set $\gamma = \eta = 10^{-8}$ as convenient and satisfactory choices, allowing us to proceed without extensive efforts in identifying precise values.
 \end{enumerate}

 \begin{figure}[htbp]
\centering
\subfloat[\acrshort{pdg} bound, (eqn:\ref{eqn:SDG-PDG})]{\includegraphics[width=0.482\linewidth]{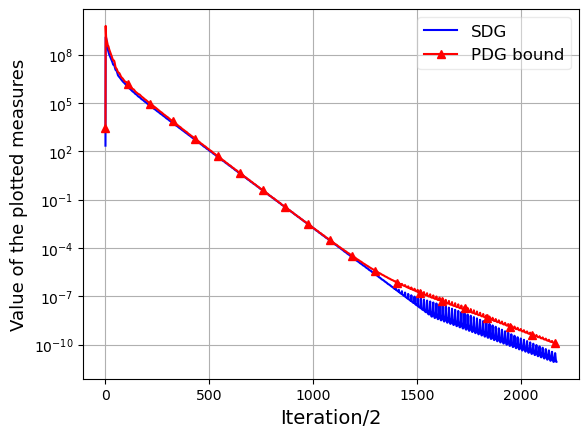} \label{fig:G<D}}
\subfloat[\acrshort{sdg} bound, (eqn:\ref{eqn:PDG-SDG})]{\includegraphics[width=0.485\linewidth]{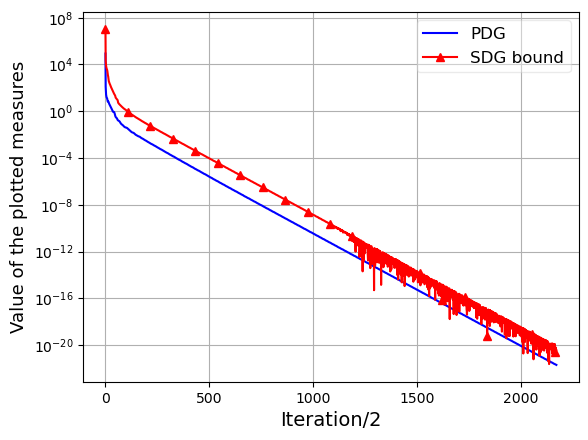} \label{fig:D<G}}
\caption{Numerical illustration of Theorems (\ref{thm:SDG-PDG} and \ref{thm:PDG-SDG -- manifold}) with $n = 20$ and $m = 10$}
\end{figure}

\subsection{Basis Pursuit}
Our final experiment involves a non-smooth convex minimization problem known as the \textit{Basis Pursuit} problem \cite{Chen1994BasisP,BP}. It aims to minimize the $\ell_1$ norm while satisfying a system of linear equations and is mathematically formulated as follows:

\begin{equation} \tag{BP} \label{eqn: BP -- prob}
        \min_{x \in \R^n} ~ \|x\|_1 \hspace{1cm} \text{subject to} \hspace{1cm}  Ax = b
\end{equation}

Here, we set $n = 20$ and $m = 10$, and generate an i.i.d. Gaussian matrix $A \in \R^{m \times n}$ and vector $b \in \R^m$.

The primary objective of this experiment is to highlight the superior stability of the \acrlong{sdg} compared to the \acrlong{kkt}, as will be demonstrated subsequently.
\subsubsection{Problem Analysis}
\begin{enumerate} 
    \item \textbf{The sub-differential of the objective function}
    \begin{equation*} \label{eqn: BP -- sub-diff}
        \partial f(x) = \partial (\|x\|_1) = \partial \left(\sum_{i = 1}^n |x_i|\right) \stackrel{(\ref{eqn:separable, sub-diff})}{=} \prod_{i = 1}^n \partial |x_i|
    \end{equation*}
    where \begin{equation*} \label{eqn: BP -- abs sub-diff}
        \partial |\nu| = \begin{cases}
        \{-1\}, & \nu < 0 \\ [-1, 1], & \nu = 0 \\ \{1\}, & \nu > 0 
    \end{cases}        
    \end{equation*}
    \item \textbf{The stationarity part of the KKT error:} Applying the same approach as we have previously done for (\ref{QP: prob}), we obtain:
    \begin{align*}
    \left\| \partial f(x) + A^Ty \right\|_0^2 = \sum_{i = 1}^n \begin{cases}
         \left(\mathrm{sgn}(x_i) + (A^Ty)_i\right)^2, & x_i \neq 0 \\
         \left(\min\left(\max\left(-\left(A^Ty\right)_i, -1\right), 1\right) + (A^Ty)_i\right)^2, & x_i = 0 
    \end{cases} 
\end{align*}
    \item \textbf{The proximal operator is:} (\textit{Example 2.16, \cite{soft-thresholding}}) 
    \begin{equation*} \label{eqn: BP -- prox}
        \prox_{sf}(x) = \left( \left[|x_i| - s\right]_+ \mathrm{sgn}(x_i)\right)_{1 \leq i \leq n}
    \end{equation*}
    \item \textbf{The Fenchel-Conjugate is:} (\textit{Example 3.26, \cite{convex_opt}}) 
     \begin{equation*}  \label{eqn: BP -- conj}
            f^*(\mu) = \imath_{\mathcal{B}_{\infty}(0, 1)}(\mu)
    \end{equation*}
    \item \textbf{The conjugate function is $L_{f^*} = 0-$Lipschitz on its domain}, $\mathcal{B}_{\infty}(0, 1)$. So, we can take advantage of using Proposition \ref{prop:PDG-SDG} in Appendix \ref{appendix:PDG in terms of SDG} that has a slightly tighter bound than Theorem \ref{thm:PDG-SDG -- manifold}. 
    \item \textbf{Projection onto} $\dom f^* =\mathcal{B}_{\infty}(0, 1)$.
    \begin{equation*} \label{eqn: BP -- proj}
            \proj_{\mathcal{B}_{\infty}(0, 1)}(\mu) = \left(\begin{cases} \sgn\left(\mu_i\right), & |\mu_i| > 1 \\ 
            \mu_i, & |\mu_i| \leq 1 
            \end{cases}\right)_{1 \leq i \leq n}
    \end{equation*}
\end{enumerate}
\subsubsection{Two versions of PDHG}
In this sub-subsection, we consider two versions of the PDHG algorithm designed to solve two formulations of the saddle point problem of (\ref{eqn: BP -- prob}):
\begin{align*}
    &\max_y \min_x \|x\|_1 + \left\langle Ax - b, y \right\rangle \stepcounter{equation}\tag{\theequation}\label{eqn: BP -- saddle pt. prob. 1} \\ 
    \equiv & \max_x \min_y \|y\|_1 + \left \langle Ay - b, x \right\rangle \\ 
    \equiv & \max_x \min_y - \left(-\|y\|_1 - \left\langle Ay - b, x \right\rangle \right) \\ 
    \equiv & - \min_x \max_y \left(-\|y\|_1 - \left\langle Ay - b, x \right\rangle \right) \stepcounter{equation}\tag{\theequation}\label{eqn: BP -- saddle pt. prob. 2}
\end{align*}
Here, (\ref{eqn: BP -- saddle pt. prob. 1}) and (\ref{eqn: BP -- saddle pt. prob. 2}) can be interpreted as swapping the roles of the primal and dual variables ($x$ and $y$, respectively). Applying the PDHG algorithm to solve (\ref{eqn: BP -- saddle pt. prob. 1}) and (\ref{eqn: BP -- saddle pt. prob. 2}) leads us to the following two versions: 
    \begin{table}[!ht]
    \centering
    \begin{tabular}{||l  l||} 
    \hline 
    \multicolumn{1}{||c}{\textbf{Version 1}} &  \multicolumn{1}{c||}{\textbf{Interpretation}} \\
    \hline \hline
    $\bar{x}_{k+1} = \prox_{\tau f}\left(x_k - \tau A^Ty_k\right)$ & Primal Forward-Backward step\\
    $\bar{y}_{k+1} = y_k + \sigma \left(A\bar{x}_{k+1} - b\right)$ & Dual Forward-Backward step\\
    $x_{k+1} = \bar{x}_{k+1} - \tau A^T\left(\bar{y}_{k+1} - y_k\right)$  & Primal Extrapolation step  \\
    $y_{k+1} = \bar{y}_{k+1}$ &  Dual Extrapolation step \\
      \hline 
    \end{tabular}
    \caption{Interpretation of each step of PDHG for solving (\ref{eqn: BP -- saddle pt. prob. 1})}
    \label{alg:PDHG v1}
\end{table}
    \begin{table}[!ht]
    \centering
    \begin{tabular}{||l  l||} 
    \hline
    \multicolumn{1}{||c}{\textbf{Version 2}} &  \multicolumn{1}{c||}{\textbf{Interpretation}}  \\
    \hline \hline  
    $\bar{y}_{k+1} = y_k + \sigma \left(Ax_{k} - b\right)$ &Dual Forward-Backward step\\
    $\bar{x}_{k+1} = \prox_{\tau f}\left(x_k - \tau A^T\bar{y}_{k+1}\right)$ &Primal Forward-Backward step\\
    $y_{k+1} = \bar{y}_{k+1} + \sigma A\left(\bar{x}_{k+1} - x_k\right)$  & Dual Extrapolation step \\
    $x_{k+1} = \bar{x}_{k+1}$ &  Primal Extrapolation step \\
      \hline 
    \end{tabular}
    \caption{Interpretation of each step of PDHG for solving (\ref{eqn: BP -- saddle pt. prob. 2})}
    \label{alg:PDHG v2}
\end{table}

The key advantage of implementing the two versions of PDHG for solving the saddle point problems (\ref{eqn: BP -- saddle pt. prob. 1}) and (\ref{eqn: BP -- saddle pt. prob. 2}) is to demonstrate the superior stability of the \acrlong{sdg} over the \acrlong{kkt}. To understand this, let's revisit the stationarity part of the \acrshort{kkt}:
\begin{equation*}
    \left\|\partial f(x) + A^Ty\right\|^2 = \sum_{i = 1}^n \begin{cases}
         \left(\mathrm{sgn}(x_i) + (A^Ty)_i\right)^2, & x_i \neq 0 \\
         \left(\min\left(\max\left(-\left(A^Ty\right)_i, -1\right), 1\right) + (A^Ty)_i\right)^2, & x_i = 0 
    \end{cases}
\end{equation*}
The instability in the \acrshort{kkt} can be interpreted through the following two remarks:
\begin{enumerate}
    \item The stationarity expression is highly sensitive to whether the component $x_i$ equals zero for each $i$. If $|x_i| \leq \varepsilon$ for some $i$ and for some very small $\varepsilon > 0$, the algorithm may face convergence issues.
    \item In the first version of the algorithm, $\bar{x}$ could be zero since it represents the proximal of the $\ell_1$-norm (as defined in (\ref{eqn: BP -- prox})). However, the subsequent update $x = \bar{x} - \tau A^T(\bar{y} - y)$ may result in $x$ being very close to zero but never exactly zero. Consequently, this characteristic could prevent the algorithm from converging. In contrast, the second version of the algorithm performs the last update of $x$ as the proximal of the $\ell_1$-norm, which could be exactly zero.
\end{enumerate}
\begin{figure}[htbp]
    \centering
    \includegraphics[width=0.8\linewidth, height=6cm]{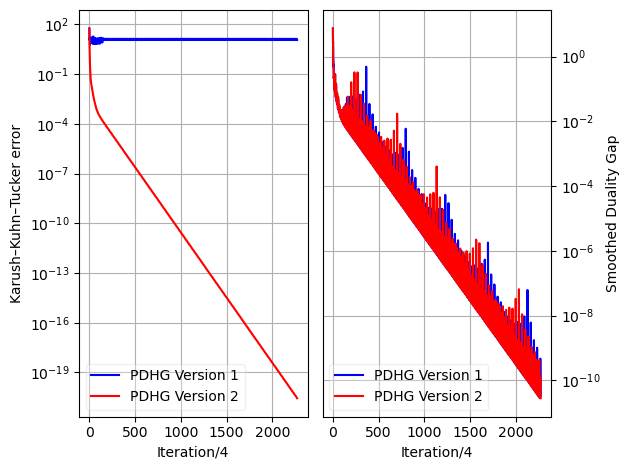}
    \caption{Version 1 vs. version 2 of PDHG for both: the \acrshort{kkt} and \acrshort{sdg}}
    \label{fig:BP-SDG-KKT}
\end{figure}

\subsection{Benchmark comparison}
In this last subsection, we conduct a benchmark analysis by applying our findings to the six experiments previously discussed. Initially, in Table \ref{table:OG}, we report the number of iterations required by each problem to identify an $\varepsilon = 10^{-8}$ solution, employing various stopping criteria. Note that, the same data is used to compute the 3 measures for each experiment. It provides additional evidence supporting our observations from Figure \ref{fig:DO-OG}, showing that the \acrlong{sdg} consistently achieves $\varepsilon$-solutions with fewer iterations across nearly all of our experiments, thus demonstrating its superior efficiency.

\begin{table}[!ht]
\centering
\begin{tabular}{||c|c|c|c||}
\hline
\diagbox{\textbf{Problems}}{\textbf{Measures}} & \textbf{\acrshort{kkt}} & \textbf{\acrshort{sdg}} & \textbf{\acrshort{pdg}} \\
\hline \hline
One-dimensional (\ref{eqn: 1D -- prob})& 12 & \textbf{11} & 13 \\
I.I.D. Gaussian matrices (\ref{eqn:IIDG})& 5334 & \textbf{5244} & 11538 \\
Non-trivial covariance (\ref{eqn:NTC})& 14274 & \textbf{13968} & 31806 \\
Distributed Optimization (\ref{eqn: DO -- prob})& 5652 & \textbf{4014} & 4659 \\
Quadratic programming (\ref{QP: prob}) & 3492 & \textbf{3172} & 3358 \\
Basis Pursuit (\ref{eqn: BP -- prob}) & $ \gg 10^6$ & \textbf{6920} & 7196 \\
\hline
\end{tabular}
\caption{Iterations needed to identify $\varepsilon = 10^{-8}$ solution using various stopping criteria across experiments}
\label{table:OG}
\end{table}

Subsequently, in Table \ref{table:comparability}, we demonstrate the tightness of our comparability bounds, as outlined in Section \ref{sec:comparability bounds}, across each experiment. This is achieved by presenting the average and standard deviation of the ratio of each result, more specifically, for the bound $\mathcal{M}_1 \leq \mathcal{W}(\mathcal{M}_2)$. The displayed values represent the average and standard deviation of the term $\frac{\mathcal{W}(\mathcal{M}_2)}{\mathcal{M}_1}$. We observe the following: 

\begin{table}[!ht]
\centering
\begin{tabular}{||c|c|c|c|c||}
\hline
\multirow{2}{*}{\textbf{Problem}} & \textbf{Theorem \ref{thm:SDG-KKT}} & \textbf{Theorem \ref{thm:KKT-SDG}} & \textbf{Theorem \ref{thm:SDG-PDG}} & \textbf{Theorem \ref{thm:PDG-SDG -- manifold}} \\
& $\G \leq \underaccent{\bar}{\beta} \K $ & $\K \leq \Bar{\beta}_L \G$ & $\G \leq \mathcal{W}_1(\D)$ & $\D \leq \mathcal{W}_2(\G)$ \\
\hline \hline
(\ref{eqn: 1D -- prob}) & $1.76 \pm 0.6$ & $0.95 \pm 0.15$ & $(6.82 \pm 19.3)10^4$ & $5.0 \pm 3.2$ \\
(\ref{eqn:IIDG}) & $2.02 \pm 0.06$ & $2.2 \pm 0.05$ & $101.65 \pm 25.13$ & $5.23 \pm 5.43$ \\
(\ref{eqn:NTC}) & $2.03 \pm 0.34$ & $5.44 \pm 0.47$ & $15.21 \pm 2.47$ & $42.19 \pm 35.72$\\
(\ref{eqn: DO -- prob}) & $3.72 \pm 3.9$ & $(2.12 \pm 25.9) 10^8$ & $11.8 \pm 9.15$ & $28.17 \pm 10.75$\\
(\ref{QP: prob}) & $1.25 \pm 2.03$ & $+\infty$ & $4.21 \pm 4.7$ & $32.3 \pm 20.98$ \\
(\ref{eqn: BP -- prob}) & $(7.29 \pm 146) 10^{9}$ & $+\infty$ & $2.49 \pm 2.02$ & $20.52 \pm 4.25$ \\
\hline
\end{tabular}
\caption{Average $\pm$ (Standard deviation) of the ratio of each result across each experiment. The ratio of a result, $\mathcal{M}_1 \leq \mathcal{W}(\mathcal{M}_2)$, is: $\frac{\mathcal{W}(\mathcal{M}_2)}{\mathcal{M}_1}$}
\label{table:comparability}
\end{table}

\begin{itemize}
    \item We observe that Theorem \ref{thm:SDG-KKT} provides a tight bound overall. Notably, in the Basis Pursuit experiment, the non-convergence of the \acrshort{kkt} explains the huge average and standard deviation observed.
    \item Concerning Theorem \ref{thm:KKT-SDG}, we note a relatively less tight bound, particularly when the gradient of the objective has a larger Lipschitz constant. The Lipschitz constants for the experiments: the I.I.D. Gaussian matrices, the non-trivial covariance, and the distributed optimization are, approximately: 49, 133.5, and 220.6, respectively. Furthermore, the non-smooth nature of the objective function in both the quadratic programming and basis pursuit experiments justifies the occurrence of $+\infty$ in our results.
    \item Theorems (\ref{thm:SDG-PDG} and \ref{thm:PDG-SDG -- manifold}) provide tight bounds overall. Even in instances where the average ratio may appear elevated, such as observed in Theorem \ref{thm:SDG-PDG} during the one-dimensional experiment, Figure \ref{fig:benchmark} illustrates that this phenomenon occurs primarily within the initial iterations.
\end{itemize}
\begin{figure}[htbp]
    \centering
\subfloat[One-dimensional]{\includegraphics[width=0.33\linewidth]{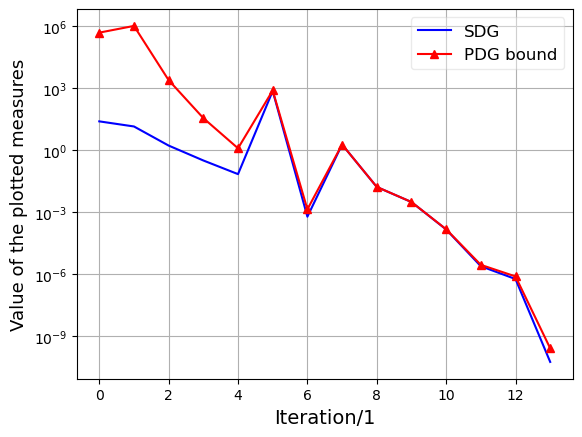} \label{fig:thm7:1D}} 
\subfloat[Non-trivial covariance]{ \includegraphics[width=0.33\linewidth]{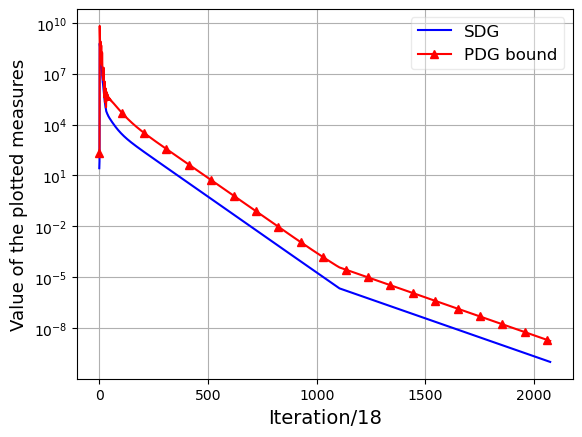} \label{fig:thm7:COV}}
\subfloat[Basis pursuit]{ \includegraphics[width=0.33\linewidth]{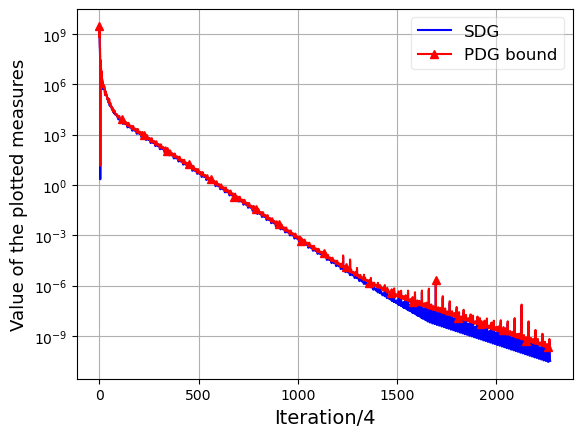} \label{fig:thm7:BP}}
\caption{Numerical illustration of Theorem \ref{thm:SDG-PDG}} 
\label{fig:benchmark}
\end{figure}
\begin{figure}[htbp]
    \centering
\subfloat[One-dimensional]{\includegraphics[width=0.33\linewidth]{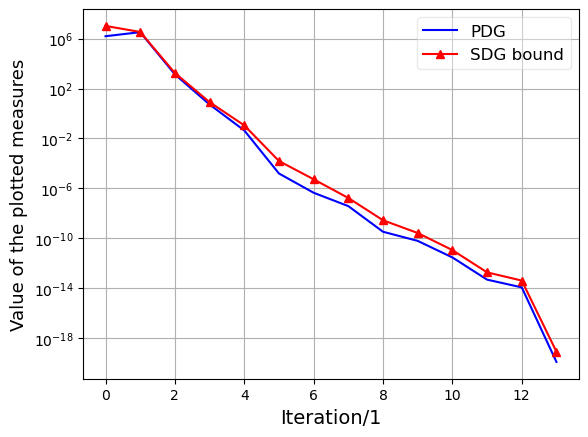} \label{fig:thm8:1D}}
\subfloat[Non-trivial covariance]{\includegraphics[width=0.33\linewidth]{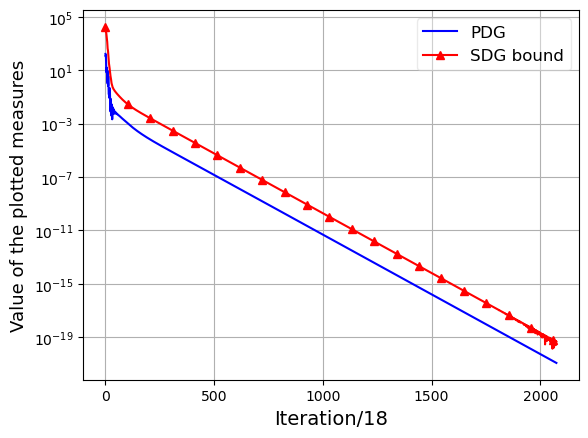} \label{fig:thm8:COV}}
\subfloat[Basis pursuit]{\includegraphics[width=0.33\linewidth]{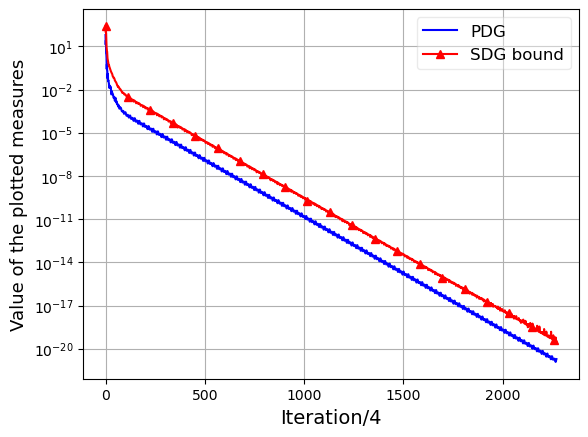} \label{fig:thm8:BP}}
\caption{Numerical illustration of Theorem \ref{thm:PDG-SDG -- manifold}} 
\end{figure}

\section{Conclusion and Perspectives}
In this paper, we have studied several stopping criteria: \acrshort{ogfe}, the \acrshort{kkt}, \acrshort{pdg}, and \acrshort{sdg} to determine under which conditions they are accurate to detect $\varepsilon-$solutions. In the realm of convex optimization problems under affine-equality constraints, our findings have led to significant insights:
\begin{itemize}
    \item The efficacy of \acrshort{sdg} stands on par with both: the \acrshort{kkt} and \acrshort{pdg} given specific conditions.
    
    \item By assuming \acrshort{MSRSDL} or leveraging \acrshort{qebsg}, we have derived that the \acrshort{kkt}, or \acrshort{sdg} and \acrshort{pdg}, respectively, serve as practical upper bounds for the \acrlong{og}, providing an effective approximation.
    
    \item Our investigation vividly demonstrates the superior stability of \acrshort{sdg} over the \acrshort{kkt}.
    
    \item Although our methodology is rooted in affine-equality constraints, our findings extend their applicability to encompass other problems entailing inequality constraints.
\end{itemize}
This work opens several perspectives:
\begin{itemize}
    \item Given our utilization of the \acrshort{qebsg} assumption in establishing the \acrshort{pdg} approximation for the \acrshort{og}, it prompts the question: could enhancing \acrshort{pdg} with a distinct regularity assumption potentially improve its performance?
    
    \item Is it feasible to develop a novel algorithm leveraging \acrshort{sdg} as its stopping criterion?
    \item What about the applicability of our findings in non-convex optimization settings?

\end{itemize}

\appendix
\section{MSR and QEB for LC-LS} \label{appendix:MSR et QEB, LC-LS}

\begin{itemize}[label={$\blacktriangleright$}]
    \item \textbf{Lemma [\ref{lem:LC-LS--MSR}]} \textit{For the \ref{eqn: LS -- prob} problem, let $z^{\star} = \proj_{\Z^{\star}}(z)$. Then, for any $z \in \Z$ the Lagrangian's sub-differential satisfies:
    \begin{align*}
        \left\|\nabla_x \Lag(x, y)\right\| + \left\|\nabla_y \Lag(x, y)\right\| & \geq \left|\lambda(\mathcal{M})\right|_{\min} \dist\left(z, \Z^{\star}\right) \\
        \mathcal{M} &= \begin{bmatrix} Q^TQ & A^T \\ A & 0\end{bmatrix}
    \end{align*}
    where $\left|\lambda\left(\mathcal{M}\right)\right|_{\min}$ is the smallest positive eigenvalue of $\mathcal{M}$.}
\begin{proof}
 For any $z \in \Z$, we have: 
    \begin{align*}
        \left\|\nabla_x \Lag(x, y) \right\| + \left\| \nabla_y \Lag(x, y) \right\| & \geq \left\|\nabla \Lag(x, y) \right\| \\ 
        &= \left\|\nabla \Lag(x, y) - \nabla \Lag(x^{\star}, y^{\star}) \right\| \\ 
        &= \left\|\begin{bmatrix} Q^TQ(x - x^{\star}) + A^T(y - y^{\star}) \\ A(x - x^{\star}) \end{bmatrix}\right\| \\ 
        &= \|\mathcal{M} (z - z^{\star})\| \\ 
        & \stackrel{(\ref{eqn:Rayleigh})}{\geq} |\lambda(\mathcal{M})|_{\min} \dist(z, \Z^*) \smartqed 
    \end{align*}
\end{proof} 
    \item \textbf{Lemma [\ref{lem:LC-LS--QEB}]} \textit{For the \ref{eqn: LS -- prob} problem, the self-centered smoothed gap could be reformulated into a quadratic form. That is, for any $z \in \Z$:
    \begin{equation*}
        \G(z) = z^T\mathcal{H} z + \langle z, v \rangle + cst 
    \end{equation*}
    where, 
    \begin{align*}
       v_{(n + m)} = \begin{bmatrix} v_x \\ v_y\end{bmatrix} && \mathcal{H}_{{(n+m)\times(n+m)}} = \begin{bmatrix}
            M_{xx}& \frac{1}{2} M_{xy} \\ \frac{1}{2} M_{xy}^T & M_{yy}
        \end{bmatrix}
    \end{align*}    
\begin{table}[!ht]
    \centering
    \begin{tabular}{||l|c||} 
    \hline
    \multicolumn{1}{||c}{\textbf{Vector - Matrix}} & \multicolumn{1}{|c||}{\textbf{Size}} \\
    \hline \hline  
    $v_x = \frac{\beta_x}{\tau}B^{-1}Q^Tc -Q^Tc$ &  $n \times 1$  \\
    $v_y = -AB^{-1}Q^Tc$ & $m \times 1$\\
    $B = Q^TQ + \frac{\beta_x}{\tau} \Id_{n}$ & $n \times n$  \\
    $M_{xx} = \frac{1}{2}Q^TQ + \frac{\sigma}{2\beta_y} A^TA + \frac{\beta_x^2}{2\tau^2} B^{-1} - \frac{\beta_x}{2\tau} \Id_{n}$ & $n \times n$ \\
     $M_{xy} = A^T - \frac{\beta_x}{\tau} B^{-1}A^T$ & $n \times m$  \\
     $M_{yy} = \frac{1}{2} AB^{-1}A^T$ & $m \times m$ \\
      \hline 
    \end{tabular}
\end{table}}
\begin{proof}
 We start with the definition of the smoothed duality gap (eqn:\ref{eqn:SDG general}): 
\begin{align*}
    \G(z) &= \sup_{z' \in \Z}~ \Lag(x, y') - \Lag(x', y) -\frac{\beta_x}{2\tau}\|x' - x\|^2 - \frac{\beta_y}{2\sigma} \|y' - y\|^2 \\ 
    &= \begin{multlined}[t] \frac{1}{2}\|Qx - c\|^2 + \langle b, y \rangle + {\color{red} \sup_{y'} \left(\langle Ax - b, y' \rangle - \frac{\beta_y}{2\sigma}\|y' - y\|^2 \right)} \\ + {\color{blue}  \sup_{x'} \bigg(- \frac{1}{2} \|Qx' - c\|^2 - \langle x', A^Ty \rangle - \frac{\beta_x}{2\tau}\|x' - x\|^2 \bigg)} \end{multlined}\\ 
    &= \begin{multlined}[t] \frac{1}{2}\|Qx - c\|^2 + \langle b, y \rangle + {\color{red} \left(\langle Ax - b, \Tilde{y} \rangle - \frac{\beta_y}{2\sigma}\|\Tilde{y} - y\|^2 \right)} \\ - {\color{blue}  \left( \frac{1}{2} \|Q\Tilde{x} - c\|^2 - \langle \Tilde{x}, A^Ty \rangle  - \frac{\beta_x}{2\tau}\|\Tilde{x} - x\|^2 \right)} \end{multlined}
    \end{align*}     
\begin{equation*}
\text{with}  ~ \begin{cases} \displaystyle
        \Tilde{y} &= \arg\max_{y'} \left(\langle Ax - b, y' \rangle - \frac{\beta_y}{2\sigma}\|y' - y\|^2 \right) \\ 
        &= y + \frac{\sigma}{\beta_y} \left(Ax - b\right) \\
        \Tilde{x} &= \arg\max_{x'} \left(- \frac{1}{2} \|Qx' - c\|^2 - \langle x', A^Ty \rangle  - \frac{\beta_x}{2\tau}\|x' - x\|^2 \right) \\ 
        &= B^{-1} \left(Q^Tc - A^Ty + \frac{\beta_x}{\tau}x\right) ~\text{where}~ B = Q^TQ + \frac{\beta_x}{\tau} \mathrm{Id} 
    \end{cases}
\end{equation*}
    Substituting $\Tilde{x}$ and $\Tilde{y}$, simplifying, and gathering the related terms we get: 
    \begin{align*}
     &\G(z) = \begin{multlined}[t] \overbrace{\frac{1}{2}\|Qx\|^2 + \frac{\sigma}{2\beta_y} \|Ax\|^2 -\frac{1}{2} \left\|\frac{\beta_x}{\tau}QB^{-1}x\right\|^2 - \frac{\beta_x}{2\tau}\left\|\left(\frac{\beta_x}{\tau} B^{-1} - \Id\right)x\right\|^2}^{T_{xx}} \end{multlined}\\ 
     &\underbrace{+ \langle x, A^Ty \rangle - \frac{\beta_x}{\tau} \left(\left\langle B^{-1}x, A^Ty \right\rangle  - \left\langle QB^{-1}x, QB^{-1}A^Ty\right\rangle  - \left\langle \left(\frac{\beta_x}{\tau} B^{-1} - \Id\right)x ,B^{-1}A^Ty \right\rangle\right)}_{T_{xy}} \\ 
     &\underbrace{+ \left\langle B^{-1}A^Ty, A^Ty \right\rangle - \frac{1}{2}\left\|QB^{-1}A^Ty\right\|^2 - \frac{\beta_x}{2\tau} \left\|B^{-1}A^Ty\right\|^2}_{T_{yy}} \\ 
     &\underbrace{- \langle Qx, c \rangle - \frac{\sigma}{\beta_y} \langle Ax, b \rangle - \left\langle \frac{\beta_x}{\tau}QB^{-1}x, QB^{-1}Q^Tc - c\right\rangle - \frac{\beta_x}{\tau}\left\langle \left(\frac{\beta_x}{\tau} B^{-1} - \Id\right)x ,B^{-1}Q^Tc\right\rangle}_{T_{x}}\\ 
     &\underbrace{ - \left\langle B^{-1}Q^Tc, A^Ty \right\rangle + \left\langle QB^{-1}A^Ty, QB^{-1}Q^Tc - c \right\rangle + \frac{\beta_x}{\tau} \left\langle B^{-1}Q^Tc, B^{-1}A^Ty \right\rangle}_{T_y} \\ 
     &\underbrace{+ \frac{1}{2}\|c\|^2 + \frac{\sigma}{2\beta_y}\|b\|^2 - \frac{1}{2} \left\|QB^{-1}Q^Tc - c\right\|^2  -\frac{\beta_x}{2\tau} \left\| B^{-1}Q^Tc\right\|^2 }_{cst}
\end{align*}
Now, we will further simplify each sub-term, for instance: 
\begin{align*}
     T_{xx} &= \frac{1}{2}\|Qx\|^2 + \frac{\sigma}{2\beta_y} \|Ax\|^2 -\frac{1}{2} \left\|\frac{\beta_x}{\tau}QB^{-1}x\right\|^2 - \frac{\beta_x}{2\tau}\left\|\left(\frac{\beta_x}{\tau} B^{-1} - \Id\right)x\right\|^2 \\
    &= \begin{multlined}[t]\frac{1}{2} \left\langle Qx, Qx\right\rangle + \frac{\sigma}{2\beta_y} \left\langle Ax, Ax\right\rangle - \frac{\beta_x^2}{2\tau^2} \left\langle QB^{-1}x, QB^{-1}x\right\rangle \\ - \frac{\beta_x}{2\tau} \left\langle \left(\frac{\beta_x}{\tau}B^{-1} - \Id\right)x, \left(\frac{\beta_x}{\tau}B^{-1} - \Id\right)x\right\rangle\end{multlined} \\ 
    &= \left\langle x,  M_{xx} x \right\rangle 
\end{align*}
where 
\begin{align*}
   M_{xx} &= \frac{1}{2}Q^TQ + \frac{\sigma}{2\beta_y} A^TA - \frac{\beta_x^2}{2\tau^2} B^{-1}Q^TQB^{-1}  - \frac{\beta_x}{2\tau}\left(\frac{\beta_x}{\tau}B^{-1} - \Id\right)^T\left(\frac{\beta_x}{\tau}B^{-1} - \Id\right) \\ 
    &= \frac{1}{2}Q^TQ + \frac{\sigma}{2\beta_y} A^TA - \frac{\beta_x^2}{2\tau^2} B^{-1}Q^TQB^{-1} - \frac{\beta_x^3}{2\tau^3} B^{-1}B^{-1} + \frac{\beta_x^2}{\tau^2}B^{-1} - \frac{\beta_x}{2\tau} \Id \\ 
    &= \frac{1}{2}Q^TQ + \frac{\sigma}{2\beta_y} A^TA - \frac{\beta_x^2}{2\tau^2} B^{-1} \underbrace{\left( Q^TQ + \frac{\beta_x}{\tau} \Id \right)}_{B}B^{-1} + \frac{\beta_x^2}{\tau^2}B^{-1} - \frac{\beta_x}{2\tau} \Id \\ 
    &= \frac{1}{2}Q^TQ + \frac{\sigma}{2\beta_y} A^TA - \frac{\beta_x^2}{2\tau^2} B^{-1} B B^{-1} + \frac{\beta_x^2}{\tau^2}B^{-1} - \frac{\beta_x}{2\tau} \Id \\ 
    &=  \frac{1}{2}Q^TQ + \frac{\sigma}{2\beta_y} A^TA + \frac{\beta_x^2}{2\tau^2} B^{-1} - \frac{\beta_x}{2\tau} \Id\\ 
\end{align*}
Same kind of simplifications could be done for the other sub-terms. Therefore, we can rewrite the smoothed duality gap as follows: 
\begin{align*}
    \G(z) &= \left\langle x, M_{xx} x \right\rangle + \left\langle x, M_{xy} y \right\rangle +  \left\langle y, M_{yy} y \right\rangle  +  \langle x, v_x \rangle + \langle y, v_y \rangle  + cst \\ 
    &= z^T\mathcal{H} z + \langle z, v \rangle + cst  \smartqed
\end{align*}
\end{proof}
\end{itemize}

\section{PDG in terms of SDG}
\label{appendix:PDG in terms of SDG}

Within this appendix, we provide an exhaustive proof of Theorem \ref{thm:PDG-SDG -- manifold}. We start by defining two vectors pivotal to demonstrating the result, along with outlining their key properties. 
\begin{enumerate}
    \item The first one is:
\begin{equation} \label{eqn:p*}
    p^* := \prox_{\beta_x f^*}\left(\beta_x x - A^Ty\right)
    \end{equation}
Where $f^* \in \Gamma_0(\X)$ is the Fenchel-Conjugate of the function $f$. Then, by Moreau's identity (\ref{eqn: Moreau's identity }), we get: 
\begin{equation} \label{SDG-PDG:Moreau's identity}
    p + \frac{1}{\beta_x} p^* = x 
\end{equation}
\item The second one comes from Lemma \ref{lem:p and fermat}: $p = \prox_{\beta_x^{-1}f} \left(x - \frac{1}{\beta_x}A^Ty\right) \iff \beta_x(x - p) \in \partial f(p) + A^Ty$. So, we define: 
\begin{equation} \label{eqn:a tilde}
    \Tilde{a} :=  -A^Ty + \beta_x (x - p) \in \partial f(p)
\end{equation}
Remark that since $\Tilde{a} \in \partial f(p)$ this ensures that $\Tilde{a} \in \dom f^*$, which is a necessary condition for the following properties to hold. 
\begin{align}
    & \text{By (\ref{eqn:proj - norms})} & \left\|a + A^Ty\right\| \leq \left\|\Tilde{a} + A^Ty\right\| = \|p^*\|  \label{PDG-SDG:proj -- norms}\\ 
   & \text{By (\ref{eqn:proj - inner})} & \left\langle -A^Ty - a, \Tilde{a} - a \right\rangle \leq 0 \label{PDG-SDG:proj -- inner} \\
   &\text{By (\ref{eqn:Fenchel-Young})}  &f(p) + f^*(\Tilde{a}) = \langle p, \Tilde{a} \rangle \label{PDG-SDG:Fencher-Young -- p et a tilde}
\end{align}
\end{enumerate}
\begin{lemma} \label{lem:PDG-SDG:a, a tilde, p*}
    For $a, p^*$, and $\Tilde{a}$ defined, respectively, in (\ref{eqn:PDG - a}), (\ref{eqn:p*}), and (\ref{eqn:a tilde}) we have: 
    \begin{equation} \label{PDG-SDG:a, a tilde, p*}
        \|a - \Tilde{a}\| \leq \|p^*\| 
    \end{equation}
\end{lemma}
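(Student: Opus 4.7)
The target inequality is $\|a-\tilde a\|\leq\|p^*\|$, and the natural strategy is to exploit the fact that $a$ is a metric projection of $-A^Ty$ onto the convex set $\dom f^*$, together with the fact that $\tilde a$ is already a feasible point of this set. The first step is therefore to verify that $\tilde a\in\dom f^*$: since $\tilde a=-A^Ty+\beta_x(x-p)\in\partial f(p)$ by Lemma~\ref{lem:p and fermat}, Proposition~\ref{prop: f and f* sub-diff} yields $p\in\partial f^*(\tilde a)$, which in particular forces $\tilde a\in\dom f^*$.

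Once feasibility is secured, the second step is to use the variational characterization of the projection. Applying (\ref{eqn:proj - inner}) with the point being projected equal to $-A^Ty$, the projection equal to $a$, and the feasible comparison point $u=\tilde a$, we obtain
\begin{equation*}
\langle \tilde a-a,\; a-(-A^Ty)\rangle=\langle \tilde a-a,\; a+A^Ty\rangle\geq 0.
\end{equation*}

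The third step is a one-line Pythagorean decomposition. Thanks to Moreau's identity (\ref{SDG-PDG:Moreau's identity}) we can rewrite $p^*=\beta_x(x-p)=\tilde a+A^Ty$, and then split this vector as
\begin{equation*}
p^*=(\tilde a-a)+(a+A^Ty).
\end{equation*}
Expanding the squared norm and invoking the nonnegative inner product produced in the previous step gives
\begin{equation*}
\|p^*\|^2=\|\tilde a-a\|^2+2\langle\tilde a-a,a+A^Ty\rangle+\|a+A^Ty\|^2\geq\|\tilde a-a\|^2,
\end{equation*}
which is the desired bound after taking square roots.

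I do not anticipate a real obstacle: the only mild subtlety is justifying that the projection property (\ref{eqn:proj - inner}), stated in the background section for projections onto nonempty closed convex sets, is legitimately applicable here, which requires that $\dom f^*$ be convex (true because $f^*\in\Gamma_0(\X)$) and that $\tilde a$ be an element of it (handled in the first step). Everything else is the standard ``projection plus orthogonality'' identity.
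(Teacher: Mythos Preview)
Your proof is correct and follows essentially the same approach as the paper: both rely on the identity $p^*=\tilde a+A^Ty$ together with the projection inequality (\ref{eqn:proj - inner}) applied at the feasible point $\tilde a\in\dom f^*$. The only cosmetic difference is that you expand $\|p^*\|^2=\|(\tilde a-a)+(a+A^Ty)\|^2$ and drop two nonnegative terms, whereas the paper expands $\|a-\tilde a\|^2=\|(a+A^Ty)-(\tilde a+A^Ty)\|^2$ and bounds it from above; the underlying Pythagorean-plus-obtuse-angle argument is identical.
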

\begin{proof}
\begin{align*}
    \|a - \Tilde{a} \|^2 &= \left\|a + A^Ty\right\|^2 + \left\|\Tilde{a} + A^Ty\right\|^2 - 2\left\langle a + A^Ty,  \Tilde{a} + A^Ty \right\rangle \\ 
    &= \left\|a + A^Ty\right\|^2 + \left\|\Tilde{a} + A^Ty\right\|^2 + {\color{blue} 2\left\langle - a - A^Ty,  \Tilde{a} - a \right\rangle} + 2\left\langle - a - A^Ty,  a + A^Ty \right\rangle \\
    &\stackrel{(\ref{PDG-SDG:proj -- inner})}{\leq} \left\|a + A^Ty\right\|^2 + \left\|\Tilde{a} + A^Ty\right\|^2 + {\color{blue} 0} - 2\left\|a + A^Ty \right\|^2 \\
    &\leq  \left\|\Tilde{a} + A^Ty\right\|^2  \smartqed 
\end{align*}
\end{proof}
\begin{corollary} \label{coro:p*<G}
    Given $\beta = (\beta_x, \beta_y) \in [0, +\infty]^2$, then for $p^*$ defined in (\ref{eqn:p*}) and for all $z \in \Z$ the self-centered \acrlong{sdg} satisfies: 
    \begin{equation} \label{eqn:p*<G}
        \G(z) \geq \frac{1}{2\beta_x} \|p^*\|^2 
    \end{equation}
\end{corollary}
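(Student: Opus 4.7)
The plan is to reduce the statement to the already-established lower bound of the self-centered smoothed gap given in Lemma \ref{lem:G lower bound}, via Moreau's identity (\ref{SDG-PDG:Moreau's identity}). The key observation is that $p^*$ and $p$ are linked in a very simple way: rearranging $p + \frac{1}{\beta_x} p^* = x$ yields $p^* = \beta_x (x - p)$, and hence $\|p^*\|^2 = \beta_x^2 \|x - p\|^2$.

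With this identity in hand, I would first rewrite the target right-hand side as $\frac{1}{2\beta_x}\|p^*\|^2 = \frac{\beta_x}{2}\|x - p\|^2$. Then I would invoke Lemma \ref{lem:G lower bound}, which states
\[
\G(z) \geq \frac{\beta_x}{2}\|x - p\|^2 + \frac{1}{2\beta_y}\|Ax - b\|^2.
\]
Dropping the non-negative feasibility term on the right yields exactly $\G(z) \geq \frac{\beta_x}{2}\|x - p\|^2 = \frac{1}{2\beta_x}\|p^*\|^2$, which is the claim.

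There is no real obstacle here: the only thing to be careful about is to make sure $p^*$ is interpreted through Moreau's identity (\ref{SDG-PDG:Moreau's identity}) rather than through its original definition via $\prox_{\beta_x f^*}$, since the former gives the clean algebraic link $p^* = \beta_x(x-p)$ needed to match the $\frac{\beta_x}{2}\|x-p\|^2$ term produced by Lemma \ref{lem:G lower bound}. The proof is therefore essentially a two-line computation once Moreau's identity is applied.
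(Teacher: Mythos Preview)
Your proposal is correct and matches the paper's proof essentially line for line: the paper also invokes Lemma~\ref{lem:G lower bound}, drops the non-negative feasibility term $\frac{1}{2\beta_y}\|Ax-b\|^2$, and then applies Moreau's identity (\ref{SDG-PDG:Moreau's identity}) to rewrite $\frac{\beta_x}{2}\|x-p\|^2$ as $\frac{1}{2\beta_x}\|p^*\|^2$. The only cosmetic difference is that the paper substitutes $x-p=\frac{1}{\beta_x}p^*$ at the end rather than rearranging to $p^*=\beta_x(x-p)$ at the start.
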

\begin{proof} From Lemma \ref{lem:G lower bound}, we know: 
\begin{equation*}
    \G(z) \geq \frac{\beta_x}{2} \|x - p\|^2 + \frac{1}{2\beta_y}\|Ax - b\|^2 \geq \frac{\beta_x}{2} \|x - p\|^2 \stackrel{(\ref{SDG-PDG:Moreau's identity})}{=} \frac{\beta_x}{2} \left\|\frac{1}{\beta_x} p^*\right\|^2 \smartqed
\end{equation*}
\end{proof}
\begin{lemma}  \label{lem:PDG-SDG:DG lower bound}
    Given $\beta = (\beta_x, \beta_y) \in [0, +\infty]^2$, a function $f \in \Gamma_0(\X)$, and let $a = \proj_{\dom f^*}(\mu)$. Then, for any $z \in \Z$ the self-centered \acrlong{sdg} defined in (\ref{eqn:SDG}) satisfies: 
    \begin{equation} \label{PDG-SDG:DG lower bound}
        f(x) + f^*(a) + \langle b, y \rangle \geq -\left( \sqrt{2\beta_x}\|x\| + \sqrt{2\beta_y}\|y\|\right) \sqrt{\G(z)}
    \end{equation}
\end{lemma}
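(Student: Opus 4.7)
The plan is to start from the Fenchel-Young inequality applied to $(x, a)$: since $f \in \Gamma_0(\X)$ and $a \in \dom f^*$, Proposition \ref{prop:Fenchel-Young} gives $f(x) + f^*(a) \geq \langle a, x \rangle$. Then I would add $\langle b, y\rangle$ on both sides and algebraically reshape the right-hand side to isolate the two quantities that Corollaries \ref{coro:FG-SDG} and \ref{coro:p*<G} know how to control, namely $a + A^Ty$ and $Ax - b$. Concretely:
\begin{align*}
    f(x) + f^*(a) + \langle b, y \rangle &\geq \langle a, x \rangle + \langle b, y \rangle \\
    &= \langle a + A^Ty, x \rangle - \langle A^Ty, x\rangle + \langle b, y \rangle \\
    &= \langle a + A^Ty, x \rangle - \langle Ax - b, y \rangle.
\end{align*}

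Next I would apply Cauchy--Schwarz to each inner product, giving the lower bound
\[
f(x) + f^*(a) + \langle b, y\rangle \geq -\|a + A^Ty\|\,\|x\| - \|Ax - b\|\,\|y\|.
\]
At this point the two remaining norms are exactly what our earlier SDG lower bounds address: property (\ref{PDG-SDG:proj -- norms}) yields $\|a + A^Ty\| \leq \|p^*\|$, and then Corollary \ref{coro:p*<G} gives $\|p^*\| \leq \sqrt{2\beta_x \G(z)}$, so $\|a + A^Ty\| \leq \sqrt{2\beta_x \G(z)}$. Meanwhile Corollary \ref{coro:FG-SDG} gives $\|Ax - b\| \leq \sqrt{2\beta_y \G(z)}$. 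Substituting both estimates and factoring out $\sqrt{\G(z)}$ produces exactly
\[
f(x) + f^*(a) + \langle b, y\rangle \geq -\bigl(\sqrt{2\beta_x}\,\|x\| + \sqrt{2\beta_y}\,\|y\|\bigr)\sqrt{\G(z)},
\]
which is the desired bound.

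I do not anticipate a serious obstacle: the entire argument is essentially a reorganization of terms followed by two already-proven smoothed-gap estimates. The only subtle point is to recognize that one should apply Fenchel--Young at the pair $(x, a)$ rather than at $(p, \tilde{a})$ (which would bring in $f(p)$ and force a more indirect route through $f^*(\tilde a)$ and the non-Lipschitz behaviour of $f^*$). Using $(x,a)$ keeps the algebra purely linear in $x$ and $y$, so that Cauchy--Schwarz and the two corollaries close the estimate in one shot.
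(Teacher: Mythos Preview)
Your proof is correct and follows essentially the same route as the paper's: Fenchel--Young at $(x,a)$, the same algebraic split into $\langle a + A^Ty, x\rangle - \langle Ax - b, y\rangle$, Cauchy--Schwarz, and then the bounds (\ref{PDG-SDG:proj -- norms}), Corollary~\ref{coro:p*<G}, and Corollary~\ref{coro:FG-SDG}. The only cosmetic difference is that the paper writes the decomposition in one line before applying Fenchel--Young, whereas you apply Fenchel--Young first and then regroup.
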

\begin{proof}
\begin{align*}
    f(x) + f^*(a) + \langle b, y \rangle &= {\color{red} f(x) + f^*(a)  - \langle x, a \rangle} +  \left\langle b - Ax, y \right\rangle + \left\langle x, A^Ty + a \right\rangle \\ 
    &\stackrel{(\ref{eqn:Fenchel-Young})}{\geq} {\color{red} 0} + \left\langle y, b - Ax \right\rangle + \left\langle x, a + A^Ty \right\rangle \\ 
    &\geq - \|y\| {\color{blue} \left\|Ax - b \right\|} - \|x\| \left\|a + A^Ty \right\| \\ 
    &\stackrel{(\ref{eqn:FG-SDG})}{\geq} -\|y\| {\color{blue} \sqrt{2\beta_y \G(z)}} - \|x\| {\color{red} \left\|a + A^Ty \right\|} \\ 
    &\stackrel{(\ref{PDG-SDG:proj -- norms})}{\geq} -\|y\| \sqrt{2\beta_y \G(z)} - \|x\| {\color{red}\left\|p^* \right\|} \\ 
    &\stackrel{(\ref{eqn:p*<G})}{\geq} -\|y\| \sqrt{2\beta_y \G(z)} - \|x\| {\color{red} \sqrt{2\beta_x \G(z)}} \smartqed
\end{align*}
\end{proof}
\begin{lemma} \label{lem:PDG-SDG:Approx DG upper bound}
    Given $\beta = (\beta_x, \beta_y) \in [0, +\infty]^2$ and a function $f \in \Gamma_0(\X)$. Then, for $\Tilde{a}$ defined in (\ref{eqn:a tilde}), and for any $z \in \Z$ the self-centered \acrlong{sdg} defined in (\ref{eqn:SDG}) satisfies: 
    \begin{equation} \label{PDG-SDG:Approx DG upper bound}
        f(x) + f^*(\Tilde{a}) - \langle x, \Tilde{a} \rangle \leq \G(z) 
    \end{equation}
\end{lemma}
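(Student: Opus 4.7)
The plan is to exploit the Fenchel--Young equality at the pair $(p, \Tilde{a})$ recorded in (\ref{PDG-SDG:Fencher-Young -- p et a tilde}), which is precisely the payoff of having defined $\Tilde{a}$ as an element of $\partial f(p)$ in (\ref{eqn:a tilde}). First I would rewrite the target quantity $f(x) + f^*(\Tilde{a}) - \langle x, \Tilde{a}\rangle$ by substituting $f^*(\Tilde{a}) = \langle p, \Tilde{a}\rangle - f(p)$, which reduces the left-hand side to the cleaner expression
\[
f(x) - f(p) - \langle x - p, \Tilde{a}\rangle.
\]
This step removes all mention of $f^*$ and puts the bound on the same algebraic footing as the closed-form expression (\ref{eqn:SDG}) of the self-centered smoothed gap.

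Next I would plug in the explicit formula $\Tilde{a} = -A^Ty + \beta_x(x-p)$ from (\ref{eqn:a tilde}) to expand
\[
\langle x - p, \Tilde{a}\rangle = -\langle A(x-p), y\rangle + \beta_x \|x-p\|^2.
\]
Substituting this back transforms the left-hand side into
\[
f(x) - f(p) + \langle A(x-p), y\rangle - \beta_x \|x-p\|^2,
\]
which already looks structurally identical to $\G(z)$ except that the coefficient in front of $\|x-p\|^2$ is $-\beta_x$ instead of $-\beta_x/2$, and the feasibility term is missing.

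Finally I would take the difference with $\G(z)$ as given in (\ref{eqn:SDG}) to obtain
\[
\G(z) - \bigl(f(x) + f^*(\Tilde{a}) - \langle x, \Tilde{a}\rangle\bigr) = \frac{\beta_x}{2}\|x-p\|^2 + \frac{1}{2\beta_y}\|Ax-b\|^2,
\]
which is manifestly non-negative, yielding the claim. There is no real obstacle here beyond bookkeeping; the only non-trivial ingredient is the identification of the correct Fenchel pair, namely recognising that $\Tilde{a} \in \partial f(p)$ (not $\partial f(x)$), so that Fenchel--Young holds with \emph{equality} at the specific point $p$ rather than merely as an inequality at $x$.
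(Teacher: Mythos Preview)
Your proof is correct and is essentially the same argument as the paper's: both hinge on the Fenchel--Young equality $f(p)+f^*(\Tilde a)=\langle p,\Tilde a\rangle$ from (\ref{PDG-SDG:Fencher-Young -- p et a tilde}) and the explicit form of $\Tilde a$ from (\ref{eqn:a tilde}), arriving at the identity $\G(z)=f(x)+f^*(\Tilde a)-\langle x,\Tilde a\rangle+\tfrac{\beta_x}{2}\|x-p\|^2+\tfrac{1}{2\beta_y}\|Ax-b\|^2$. The only difference is cosmetic ordering: the paper starts from $\G(z)$ and expands, whereas you start from the left-hand side and subtract.
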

\begin{proof}
\begin{align*}
    \G(z) &= f(x) {\color{red} - f(p)} + \left\langle A(x - p), y \right\rangle -\frac{\beta_x}{2} \|x - p\|^2 + \frac{1}{2\beta_y} \left\|Ax - b\right\|^2 \\ 
    &\stackrel{(\ref{PDG-SDG:Fencher-Young -- p et a tilde})}{=}  f(x)  {\color{red} + f^*(\Tilde{a}) - \langle p, \Tilde{a} \rangle} + \left\langle A(x - p), y \right\rangle - \frac{\beta_x}{2}\|x - p\|^2 + \frac{1}{2\beta_y}\left\|Ax- b\right\|^2\\ 
    &=  f(x) + f^*(\Tilde{a}) - \left\langle p, -A^Ty + \beta_x(x - p) \right\rangle + \left\langle A(x - p), y \right\rangle - \frac{\beta_x}{2}\|x - p\|^2 + \frac{1}{2\beta_y}\left\|Ax- b\right\|^2 \\ 
    &= f(x) + f^*(\Tilde{a}) - \left\langle {\color{blue}x}, \beta_x(x - p) \right\rangle + \left\langle {\color{blue}x} - p, \beta_x(x - p) \right\rangle - \frac{\beta_x}{2}\|x - p\|^2+ \left\langle Ax, y \right\rangle+ \frac{1}{2\beta_y}\left\|Ax- b\right\|^2 \\ 
    &=  f(x) + f^*(\Tilde{a}) - \left\langle x, \Tilde{a} \right\rangle +  \frac{\beta_x}{2}\|x - p\|^2 + \frac{1}{2\beta_y}\left\|Ax- b\right\|^2\\ 
    &\geq f(x) + f^*(\Tilde{a}) - \langle x, \Tilde{a} \rangle \smartqed 
\end{align*}
\end{proof}
\begin{lemma} \label{lem:PDG-SDG:primal-dual feas bound}
    Given $\beta = (\beta_x, \beta_y) \in [0, +\infty]^2$ and a function $f \in \Gamma_0(\X)$. Let $a = \proj_{\dom f^*}(\mu)$, and $\beta_{\max} = \max(\beta_x, \beta_y)$. Then, the primal-dual feasibility terms in (\ref{eqn:PDG}) could be approximated in terms of the self-centered smoothed gap defined in (\ref{eqn:SDG}). More precisely, for any $z \in \Z$: 
    \begin{equation} \label{PDG-SDG:primal-dual feas bound}
        \left\|a + A^Ty\right\|^2 + \left\|Ax - b\right\|^2 \leq 2\beta_{\max} \G(z) 
    \end{equation}
\end{lemma}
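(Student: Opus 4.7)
The plan is to avoid bounding the two terms separately with loose constants, and instead keep them together in a single inequality so that the mixed weights $\tfrac{1}{\beta_x}$ and $\tfrac{1}{\beta_y}$ appear naturally; once present, the uniform factor $\beta_{\max}^{-1}$ falls out immediately by taking the minimum of these two weights. In other words, I would aim for an inequality of the shape $\G(z) \geq \tfrac{1}{2\beta_x}(\text{dual feas.})^2 + \tfrac{1}{2\beta_y}(\text{primal feas.})^2$ and then lower-bound both coefficients by $\tfrac{1}{2\beta_{\max}}$.

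First I would invoke Lemma \ref{lem:G lower bound}, which already gives
\[
\G(z) \geq \frac{\beta_x}{2}\|x-p\|^2 + \frac{1}{2\beta_y}\|Ax-b\|^2,
\]
and then convert the $\|x-p\|^2$ term via Moreau's identity (\ref{SDG-PDG:Moreau's identity}), $p = x - \tfrac{1}{\beta_x} p^*$, to rewrite $\frac{\beta_x}{2}\|x-p\|^2 = \frac{1}{2\beta_x}\|p^*\|^2$. Next I would insert the projection inequality (\ref{PDG-SDG:proj -- norms}), namely $\|a+A^Ty\| \leq \|\tilde a + A^Ty\| = \|p^*\|$, to replace $\|p^*\|^2$ by the genuine \acrshort{pdg} quantity $\|a+A^Ty\|^2$. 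This produces
\[
\G(z) \geq \frac{1}{2\beta_x}\|a+A^Ty\|^2 + \frac{1}{2\beta_y}\|Ax-b\|^2.
\]
Finally, since $\beta_x,\beta_y \leq \beta_{\max}$, both coefficients on the right are at least $\tfrac{1}{2\beta_{\max}}$, so factoring yields $\G(z) \geq \tfrac{1}{2\beta_{\max}}\bigl(\|a+A^Ty\|^2+\|Ax-b\|^2\bigr)$, which is equivalent to the claim.

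There is no real obstacle here: all the ingredients (Lemma \ref{lem:G lower bound}, Moreau's identity, and the projection bound on $\|a+A^Ty\|$) are already established earlier in the section. The only pitfall to avoid is splitting too early and bounding $\|a+A^Ty\|^2$ and $\|Ax-b\|^2$ independently by $2\beta_x \G(z)$ and $2\beta_y \G(z)$ respectively, which would produce the weaker constant $2(\beta_x+\beta_y)$ instead of $2\beta_{\max}$; keeping the two feasibility terms together inside Lemma \ref{lem:G lower bound} before taking the minimum of the weights is what buys the sharper bound.
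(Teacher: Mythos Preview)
Your proposal is correct and follows essentially the same approach as the paper: invoke Lemma~\ref{lem:G lower bound}, apply Moreau's identity (\ref{SDG-PDG:Moreau's identity}) to rewrite $\tfrac{\beta_x}{2}\|x-p\|^2$ as $\tfrac{1}{2\beta_x}\|p^*\|^2$, use the projection bound (\ref{PDG-SDG:proj -- norms}) to replace $\|p^*\|^2$ by $\|a+A^Ty\|^2$, and then lower-bound both coefficients by $\tfrac{1}{2\beta_{\max}}$. Your remark about keeping the two terms together to obtain $2\beta_{\max}$ rather than $2(\beta_x+\beta_y)$ is exactly the point.
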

\begin{proof}
 From Lemma \ref{lem:G lower bound}, we know: 
\begin{align*}
    \G(z) &\geq \frac{\beta_x}{2} \|x - p\|^2 + \frac{1}{2\beta_y}\|Ax - b\|^2 \\ 
    &\stackrel{(\ref{SDG-PDG:Moreau's identity})}{=} \frac{1}{2\beta_x} \|p^*\|^2 + \frac{1}{2\beta_y}\|Ax - b\|^2 \\ 
    &\stackrel{(\ref{PDG-SDG:proj -- norms})}{\geq}  \frac{1}{2\beta_x} \left\|a + A^Ty\right\|^2 + \frac{1}{2\beta_y} \left\|Ax - b\right\|^2 \\ 
    &\geq \frac{1}{2\beta_{\max}} \left(\left\|a + A^Ty \right\|^2 + \left\|Ax - b\right\|^2\right) \smartqed 
\end{align*}
\end{proof}
\begin{lemma} \label{lem:diffeom of affine space}
    Let $\mathcal{A} \subseteq \R^n$ be a non-empty affine space, and $x_0 \in \mathcal{A}$. Then for the vector space $V:= \mathcal{A} - x_0$, which could be seen as a smooth manifold, one can define a unique coordinate chart ($\U, \phi$). Moreover, for any $x, \Tilde{x} \in \mathcal{A}$, 
    \begin{enumerate}
        \item The vector $x \in \mathcal{A}$ could be rewritten in terms of the diffeomorphism $\phi$. Say otherwise:
        \begin{equation}  \label{eqn:x = xo + phi inv} x = x_0 + \phi^{-1}(\lambda), \hspace{1cm} \forall \lambda \in \phi (\U)  \end{equation}
        \item The defined diffeomorphism preserves the norm. Say otherwise,
        \begin{equation} \label{eqn:phi preserves the norm}
            \left\|\phi(x - x_0) - \phi(\Tilde{x} - x_0)\right\| = \|x - \Tilde{x}\|
        \end{equation}
        \item Denoting $J_{\phi^{-1}}$ the Jacobian of the diffeomorphism $\phi^{-1}$, we have: 
        \begin{equation} \label{eqn:Jacobian of phi}
            J_{\phi^{-1}} \left(\phi(x - x_0) - \phi(\Tilde{x} - x_0)\right) = x - \Tilde{x}
        \end{equation}
    \end{enumerate}
\end{lemma}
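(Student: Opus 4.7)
The plan is to build $\phi$ explicitly from an orthonormal basis of the linear subspace $V := \mathcal{A} - x_0$ and then verify each of the three properties by a direct linear-algebra calculation. The whole argument reduces to the observation that an isometric coordinatization of a finite-dimensional inner-product space preserves the geometric structure we need, so I do not anticipate a real obstacle; the only subtle point is the interpretation of "unique."

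First I would note that, since $\mathcal{A}$ is a non-empty affine subspace of $\R^n$, the translate $V := \mathcal{A} - x_0$ is a genuine linear subspace of $\R^n$, of some dimension $d \le n$. I would pick an orthonormal basis $\{e_1, \dots, e_d\}$ of $V$ and define the chart $(\U, \phi)$ by taking $\U = V$ and $\phi \colon V \to \R^d$, $\phi(v) = (\langle v, e_1\rangle, \dots, \langle v, e_d\rangle)$. Its inverse is the linear map $\phi^{-1}(\lambda) = \sum_{i=1}^d \lambda_i e_i$. I would read the word "unique" in the statement as unique up to an orthogonal change of basis of $V$: all such choices produce a $\phi$ satisfying the three listed properties, which is all that is needed in the subsequent use of $g$.

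For property (1), since $x - x_0 \in V$, setting $\lambda := \phi(x - x_0)$ gives $\lambda \in \phi(\U)$ and $x = x_0 + \phi^{-1}(\lambda)$. For property (2), the orthonormality of the basis yields Parseval's identity $\|\phi(v)\| = \|v\|$ for every $v \in V$, so $\phi$ is a linear isometry; since both $x - x_0$ and $\tilde{x} - x_0$ lie in $V$ and $\phi$ is linear,
\[
\|\phi(x - x_0) - \phi(\tilde{x} - x_0)\| = \|\phi(x - \tilde{x})\| = \|x - \tilde{x}\|.
\]
For property (3), $\phi^{-1}$ is itself a linear map from $\R^d$ into $\R^n$, so its Jacobian coincides with $\phi^{-1}$ and
\[
J_{\phi^{-1}}\bigl(\phi(x - x_0) - \phi(\tilde{x} - x_0)\bigr) = \phi^{-1}(\phi(x - \tilde{x})) = x - \tilde{x}.
\]

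The only mildly delicate step is reconciling the manifold-theoretic language of the statement with this elementary linear-algebra construction, and in particular making explicit that "unique chart" is to be read up to orthogonal change of coordinates. Once that is agreed, every assertion of the lemma follows from the linearity of $\phi$ together with Parseval's identity, and the construction is clearly tied to the data $(\mathcal{A}, x_0)$ in the way required by the proof of Theorem \ref{thm:PDG-SDG -- manifold}.
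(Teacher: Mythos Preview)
Your proposal is correct and follows essentially the same approach as the paper: both construct $\phi$ as the coordinate map with respect to an orthonormal basis of $V$, then verify the three properties via the resulting linear isometry (what the paper calls the Pythagorean theorem and you call Parseval's identity). Your remark that ``unique'' should be read as unique up to orthogonal change of basis is a useful clarification the paper leaves implicit.
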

\begin{proof}
\begin{enumerate}
    \item  Since $V$ is a vector space, one can find an orthonormal basis for it. Say, $ \mathcal{B} = \{v_1, \dots, v_n\}$

            Thus, for any $v \in V, \exists!~ \lambda_1, \dots, \lambda_n \in \R$ such that $\displaystyle v = \sum_{i = 1}^n \lambda_i v_i$ 

            Therefore, one could define the unique diffeomorphism as follows: 
        \begin{equation} \label{eqn:phi}
            \phi \colon \mathcal{U} = V \longrightarrow \hat{\mathcal{U}} \subseteq \mathbb{R}^n \hspace{1cm} \text{s.t.} \hspace{1cm} \phi(v) = \phi\left(\sum_i \lambda_i v_i\right) = \lambda
        \end{equation}
        Equivalently, for any $x \in \mathcal{A}, \exists! \lambda_1, \dots, \lambda_n \in \R$ such that:
        \begin{equation} \label{eqn:diffeom of x in affine space}
            x = x_0 + v = x_0 + \sum_{j} \lambda_j v_j \stackrel{(\ref{eqn:phi})}{=} x_0 + \phi^{-1}(\lambda) 
        \end{equation} \begin{equation*}\smartqed\end{equation*}
    \item Let $x, \Tilde{x} \in \mathcal{A}$, then by (\ref{eqn:x = xo + phi inv}), we have: 
        \begin{align*}
            x = x_0 + \sum_{i = 1}^n \lambda_i v_i &&  \Tilde{x} = x_0 + \sum_{i = 1}^n \Tilde{\lambda}_i v_i
        \end{align*}
    Since $x - x_0 ~\&~ \Tilde{x} - x_0 \in \U$, we define: 
    \begin{align*}
        \phi(x - x_0) := \lambda = \in \R^n && \phi(\Tilde{x} - x_0) := \Tilde{\lambda} = \in \R^n
    \end{align*}
    Then, by \textit{Pythagorean theorem}, we obtain: 
    \begin{align*}
        \|x - \Tilde{x}\|^2 &= \left\| \left(x_0 + \sum_{i = 1}^n \lambda_i v_i \right) - \left(x_0 + \sum_{i = 1}^n \Tilde{\lambda}_iv_i\right)\right\|^2 \\ 
        &= \left\|\sum_{i = 1}^n \left(\lambda_i - \Tilde{\lambda}_i\right) v_i \right\|^2 \\ 
        &= \sum_{i = 1}^n (\lambda_i - \Tilde{\lambda}_i)^2 \\
        &= \|\lambda - \Tilde{\lambda}\|^2  \smartqed
    \end{align*}
    \item Since $\phi$ is a diffeomorphism, then:  
 $$\phi(x - x_0) = \lambda \iff \phi^{-1}(\lambda) = x - x_0 \stackrel{(\ref{eqn:diffeom of x in affine space})}{=} \sum_{i = 1}^n \lambda_i v_i \Longrightarrow J_{\phi^{-1}} = \begin{bmatrix} v_1 & \dots & v_n \end{bmatrix}$$
    Hence, 
    \begin{align*}
        J_{\phi^{-1}} \left(\phi(x - x_0) - \phi(\Tilde{x} - x_0)\right) &= J_{\phi^{-1}} (\lambda - \Tilde{\lambda}) \\ 
        &=  \begin{bmatrix} v_1 & \dots & v_n \end{bmatrix} \begin{bmatrix} \lambda_1 - \Tilde{\lambda}_1 \\ \vdots \\ \lambda_n - \Tilde{\lambda}_n \end{bmatrix} \\ 
        &= \sum_{i = 1}^n (\lambda_i - \Tilde{\lambda}_i) v_i \\
        &\stackrel{(\ref{eqn:diffeom of x in affine space})}{=} x - \Tilde{x}  \smartqed 
     \end{align*}
\end{enumerate}
\end{proof}
\begin{itemize}[label={$\blacktriangleright$}]
    \item \textbf{Theorem [\ref{thm:PDG-SDG -- manifold}]} \textit{
Given $\beta = (\beta_x, \beta_y) \in (0, +\infty)^2$, $z \in \Z$, and a function $f \in \Gamma_0(\X)$. Then, under the following set of assumptions (we denote it $\mathcal{E}$):
\begin{itemize}
    \item The Fenchel-Conjugate of the objective function, $f^*$, could be written in a separable way:
    \begin{equation} \label{APP:eqn: f* is separable}   f^*(\mu) = f_1^*(\mu_1) + f_2^*(\mu_2), \hspace{1.5cm} \mu \in \X \end{equation}
    \item $f_1^*$ is $L_{f_1^*}-$Lipschitz on its domain, $\dom f_1^*$. 
    \item The domain of $f^*_2$ is a non-empty affine space. 
    \item Let $\mu_0 \in \dom f_2^*$, then $\forall \mu_2 \in\dom f_2^*$, we define 
    \begin{equation} \label{APP:eqn:def of g} g(\lambda) = f_2^*\left(\mu_0 + \phi^{-1}(\lambda)\right) = f_2^*(\mu_2)\end{equation} where $\phi$ is the diffeomorphism defined in Lemma \ref{lem:diffeom of affine space} (eqn:\ref{eqn:phi}). 
    \item The function $g$ is differentiable and has an $L_g-$Lipschitz gradient. 
\end{itemize}
The \acrlong{pdg} and the \acrlong{sdg} defined, respectively, in (\ref{eqn:PDG}) and (\ref{eqn:SDG}) satisfy: 
\begin{equation*}
\begin{split}
\D(z) &\leq \begin{multlined}\left(\left(3 + \beta_x L_g\right)\G(z) + \left(\sqrt{2\beta_x}\left(2\|x\| + L_{f_1^*}\right)+ \sqrt{2\beta_y}\|y\|\right)\sqrt{\G(z)}\right)^2 \\ + 2\beta_{\max} \G(z)\end{multlined} \\ 
    a &= \proj_{\dom f^*} \left(-A^Ty\right)  \hspace{2cm} p= \prox_{\beta_x^{-1}f}\left(x - \frac{1}{\beta_x} A^Ty\right) 
\end{split}
\end{equation*}} 
\end{itemize}
\begin{proof}
We initiate by analyzing and interpreting the assumptions:
\begin{itemize}
    \item $f^*(\mu) = f_1^*(\mu_1) + f^*_2(\mu_2)$ is separable implies that:
    \begin{align}
    \partial f^*(\mu) &= \partial f_1^*(\mu_1) \times \partial f_2^*(\mu_2)  \label{APP:eqn:f* is separable -- sub-grad}\\ 
    \dom f^* &= \dom f_1^* \times \dom f_2^*  \label{APP:eqn: dom is separable}
    \end{align}
    So, $p \in \partial f^*(\Tilde{a}) \stackrel{(\ref{APP:eqn:f* is separable -- sub-grad})}{=}\partial f_1^*(\Tilde{a}_1) \times \partial f_2^*(\Tilde{a}_2)$ if, and only if, 
    \begin{equation} \label{APP:eqn: p1 et p2}
     p = (p_1, p_2) ~ \& ~ \Tilde{a} = (\Tilde{a}_1, \Tilde{a}_2) ~~~\text{s.t.}~~~ p_1 \in \partial f_1^*(\Tilde{a}_1) ~\&~ p_2 \in \partial f_2^*(\Tilde{a}_2)
    \end{equation}
    Also, for 
    \begin{align*}
        a = \proj_{\dom f^*} \left(-A^Ty\right) &\stackrel{(\ref{APP:eqn: dom is separable})}{=} \proj_{\dom f_1^* \times \dom f_2^*} \left(-A^Ty\right) \\
        &=  \left(\proj_{\dom f_1^* } \left(-\left(A^Ty\right)_1\right), \proj_{\dom f_2^*} \left(-\left(A^Ty\right)_2\right)\right)
    \end{align*}
    So, for $A^Ty = \left(\left(A^Ty\right)_1, \left(A^Ty\right)_2\right)$, we let $a = (a_1, a_2)$ be defined as follows: 
    \begin{align} \label{APP:eqn: a1 et a2}
        a_1 = \proj_{\dom f_1^*} \left(-\left(A^Ty\right)_1\right) &&  a_2 = \proj_{\dom f_2^*} \left(-\left(A^Ty\right)_2\right)
    \end{align}
    \item $f_1^*$ is $L_{f_1^*}-$Lipschitz on its domain implies: 
    \begin{equation} \label{APP:eqn:f1* is lipschitz}
        f_1^*(a_1) \leq f_1^*(\Tilde{a}_1) + L_{f_1^*} \|a_1 - \Tilde{a}_1\| 
    \end{equation}
    \item By Lemma \ref{lem:diffeom of affine space}, we have seen that we can rewrite any vector $\mu_2 \in \dom f_2^*$ as: 
    \begin{equation}
        \mu_2 = \mu_0 + \sum_{i = 1}^n \lambda_i v_i = \mu_0 + \phi^{-1}(\lambda) 
    \end{equation}
    where $\mu_0 \in \dom f_2^*, \mathcal{B} = \{v_1, \dots, v_n\}$ is an orthonormal basis for $V = \dom f_2^* - \mu_0$, and $\phi(\mu_2 - \mu_0 \in V):= \lambda$. Thus, for any $\mu_2 \in \dom f_2^*$:
    \begin{equation}
        f_2^*(\mu_2) = f_2^*\left(\mu_0 + \sum_{i = 1}^n \lambda_i v_i\right) = f_2^*\left(\mu_0 + \phi^{-1}(\lambda)\right) 
    \end{equation}
    Hence, by defining $g(\lambda) = f_2^*(\mu_0 + \phi^{-1}(\lambda))$ and assuming that $g$ is differentiable, we mean that the function $f_2^*$ is differentiable on its domain. 
    \item The function $g$ is differentiable, so by the chain rule: $\{\nabla g(\lambda)\} = J_{\phi^{-1}}^T \partial f_2^*(\mu_0 + \phi^{-1}(\lambda))$. Hence, 
    \begin{equation} \label{APP:eqn:grad g = J.w}
         \nabla g(\lambda) = J_{\phi^{-1}}^T . \omega, \hspace{1.5cm} \forall \omega \in \partial f_2^*(\mu_0 + \phi^{-1}(\lambda))
    \end{equation}
    \item The function $g$ has an $L_g-$Lipschitz gradient implies that the Taylor-Lagrange inequality holds. That is: for any $\lambda, \Tilde{\lambda} \in \hat{\U} \subseteq \R^n$, we have: 
    \begin{equation} \label{APP:eqn:nabla g is lipschitz}
        g(\lambda) \leq g(\Tilde{\lambda}) + \left\langle \nabla g(\Tilde{\lambda}), \lambda - \Tilde{\lambda} \right\rangle + \frac{L_g}{2} \|\lambda - \Tilde{\lambda}\|^2
    \end{equation}
\end{itemize}
Now, for the points $a_2, \Tilde{a}_2 \in \dom f_2^*$, we define: 
\begin{align}
    \lambda := \phi(a_2 - \mu_0) && \Tilde{\lambda} := \phi(\Tilde{a}_2 - \mu_0)
\end{align}
Thus, by (\ref{APP:eqn: p1 et p2}, \ref{APP:eqn:grad g = J.w}, and \ref{APP:eqn:nabla g is lipschitz}), we obtain: 
\begin{equation}
    g(\lambda) \leq g(\Tilde{\lambda}) + \left\langle J^T_{\phi^{-1}} . p_2, \lambda - \Tilde{\lambda} \right\rangle + \frac{L_g}{2} \|\lambda - \Tilde{\lambda}\|^2 \\    
\end{equation}
Equivalently: 
\begin{equation} \label{APP:eqn:TLI -- f2*}
    f_2^*(a_2) \leq f_2^*(\Tilde{a}_2) + \left\langle p_2, J_{\phi^{-1}} (\lambda - \Tilde{\lambda} \right\rangle + \frac{L_g}{2} \|\lambda - \Tilde{\lambda}\|^2 
\end{equation}
Everything is ready now to be used to upper-bound the term: 
\begin{align*}
    &f(x) +  f^*(a) + \langle b, y \rangle \\
    & \stackrel{(\ref{APP:eqn: f* is separable})}{=} f(x) + {\color{blue} f_1^*(a_1)} + { \color{red} f_2^*(a_2)} + \langle b, y \rangle \\
    &\stackrel{(\ref{APP:eqn:f1* is lipschitz}, \ref{APP:eqn:TLI -- f2*})}{\leq} f(x) + {\color{blue} f_1^*(\Tilde{a}_1) + L_{f^*_1} \|a_1 - \Tilde{a}_1\|} + {\color{red} f_2^*(\Tilde{a}_2) + \left\langle p_2, J_{\phi^{-1}} (\lambda - \Tilde{\lambda} \right\rangle + \frac{L_g}{2} \|\lambda - \Tilde{\lambda}\|^2} + \langle b, y \rangle \\
    &\stackrel{(\ref{eqn:phi preserves the norm}, \ref{eqn:Jacobian of phi})}{=}  f(x) + f_1^*(\Tilde{a}_1) + L_{f^*_1} \|a_1 - \Tilde{a}_1\| + f_2^*(\Tilde{a}_2) + { \color{red} \left\langle p_2, a_2 - \Tilde{a}_2 \right\rangle} + {\color{red} \frac{L_g}{2} \|a_2 - \Tilde{a}_2\|^2} + \langle b, y \rangle  \\
    &= f(x) + {\color{blue} f_1^*(\Tilde{a}_1) + f_2^*(\Tilde{a}_2)}  + L_{f^*_1} \|a_1 - \Tilde{a}_1\| + \left\langle p_2, a_2 - \Tilde{a}_2 \right\rangle +  \frac{L_g}{2} \|a_2 - \Tilde{a}_2\|^2 + \langle b, y \rangle  \\ 
    &= \begin{multlined}[t] f(x) + {\color{blue} f^*(\Tilde{a})} + L_{f^*_1} \|a_1 - \Tilde{a}_1\| + \left\langle p_2, a_2 - \Tilde{a}_2 \right\rangle  + \frac{L_g}{2} \|a_2 - \Tilde{a}_2\|^2  {\color{blue} - \langle x, \Tilde{a} \rangle - \langle x, a - \Tilde{a} \rangle} \\ {\color{blue} +  \langle x, a + A^Ty \rangle - \langle Ax - b, y \rangle} \end{multlined} \\ 
    &= \begin{multlined}[t] {\color{blue} f(x) +  f^*(\Tilde{a}) - \langle x, \Tilde{a} \rangle } + L_{f^*_1} \|a_1 - \Tilde{a}_1\| + \left\langle p_2, a_2 - \Tilde{a}_2 \right\rangle + \frac{L_g}{2} \|a_2 - \Tilde{a}_2\|^2 {\color{red} -\langle x, a - \Tilde{a} \rangle } \\ + \langle x, a + A^Ty \rangle - \langle Ax - b, y \rangle  \end{multlined} \\ 
    &\stackrel{(\ref{PDG-SDG:Approx DG upper bound})}{\leq} \begin{multlined}[t] {\color{blue} \G(z) } + L_{f^*_1} \|a_1 - \Tilde{a}_1\| + \left\langle p_2 {\color{red} - x_2}, a_2 - \Tilde{a}_2 \right\rangle + \frac{L_g}{2} \|a_2 - \Tilde{a}_2\|^2   {\color{red} -  \langle x_1, a_1 - \Tilde{a}_1 \rangle} \\ + \langle x, a + A^Ty \rangle  - \langle Ax - b, y \rangle \end{multlined} \\    
    &\leq \begin{multlined}[t]  \G(z) + L_{f^*_1} \|a_1 - \Tilde{a}_1\| + \|x_1\|\|a_1 - \Tilde{a}_1 \| + {\color{blue} \left\|p_2 - x_2\right\|} \|a_2 - \Tilde{a}_2 \| + \frac{L_g}{2} \|a_2 - \Tilde{a}_2\|^2\\ + \|x\| \left\|a + A^Ty\right\| + \left\|Ax - b\right\|\|y\| \end{multlined} \\  
    &\stackrel{(\ref{SDG-PDG:Moreau's identity})}{=} \begin{multlined}[t]  \G(z) + \left( L_{f^*_1} + {\color{blue} \|x_1\|} \right) \|a_1 - \Tilde{a}_1\| + {\color{blue} \frac{1}{\beta_x} \|p_2^*\|}\|a_2 - \Tilde{a}_2 \|  + \frac{L_g}{2} \|a_2 - \Tilde{a}_2\|^2 \\ + \|x\|\left\|a + A^Ty\right\|  + \left\|Ax - b\right\|\|y\| \end{multlined} \\ 
    &\leq \begin{multlined}[t]  \G(z) + \left( L_{f^*_1} + {\color{blue} \|x\|} \right) \|a_1 - \Tilde{a}_1\| + {\color{blue} \frac{1}{\beta_x} \|p^*\|}\|a_2 - \Tilde{a}_2 \|  + \frac{L_g}{2} \|a_2 - \Tilde{a}_2\|^2 + \|x\|{\color{red} \left\|a + A^Ty\right\| } \\ + \left\|Ax - b\right\|\|y\| \end{multlined} \\ 
    &\stackrel{(\ref{PDG-SDG:proj -- norms})}{\leq} \begin{multlined}[t]  \G(z) + \left( L_{f^*_1} + \|x\| \right) {\color{blue}  \|a_1 - \Tilde{a}_1\|} + \frac{1}{\beta_x} \|p^*\| {\color{blue} \|a_2 - \Tilde{a}_2 \|}  + \frac{L_g}{2} {\color{blue} \|a_2 - \Tilde{a}_2\|^2 }  + \|x\|{\color{red} \|p^*\|}  \\ + \left\|Ax - b\right\|\|y\| \end{multlined} \\ 
    &\stackrel{(\ref{PDG-SDG:a, a tilde, p*})}{\leq} \begin{multlined}[t]  \G(z) + \left( L_{f^*_1} +  \|x\|\right) {\color{blue} \|p^*\|} + \frac{1}{\beta_x} {\color{blue} \|p^*\|^2} + \frac{L_g}{2} {\color{blue} \|p^*\|^2} + \|x\|  {\color{blue} \|p^*\|} + \left\|Ax - b\right\| \|y\| \end{multlined} \\ 
    &\stackrel{(\ref{eqn:p*<G})}{\leq} \begin{multlined}[t]  \G(z) + \left( L_{f^*_1} + \|x\|\right) {\color{blue} \sqrt{2\beta_x\G(z)}}  + {\color{blue} 2\G(z) + \beta_xL_g \G(z)} + \|x\|  {\color{blue} \sqrt{2\beta_x\G(z)}}  \\  + {\color{red} \left\|Ax - b\right\|} \|y\|\end{multlined} \\ 
    &\stackrel{(\ref{eqn:FG-SDG})}{\leq} \left(3 + \beta_xL_g\right) \G(z) + \left( L_{f^*_1} + 2\|x\|\right) \sqrt{2\beta_x\G(z)} + \|y\|{\color{red} \sqrt{2\beta_y\G(z)}}
\end{align*}
Consequently, we have derived an upper bound for the term $f(x) + f^*(a) + \langle b, y \rangle$. Thanks to Lemma \ref{lem:PDG-SDG:DG lower bound} that provides us with a lower bound of that term as well. By combining these two bounds, we obtain:
\begin{equation}\label{PDG-SDG-manifold:DG bound}
    |f(x) + f^*(a) + \langle b, y\rangle| \leq \left(3 + \beta_x L_g\right)\G(z) + \left(\sqrt{2\beta_x}\left(2\|x\| + L_{f_1^*}\right)+ \sqrt{2\beta_y}\|y\|\right)\sqrt{\G(z)}
\end{equation}
Lastly, Lemma \ref{lem:PDG-SDG:primal-dual feas bound} along with this last bound (\ref{PDG-SDG-manifold:DG bound}) conclude the proof: 
\begin{align*}
    \D(z) &=  {\color{red} |f(x) + f^*(a) + \langle b, y\rangle|^2} + \|Ax - b\|^2 + \left\|a + A^Ty\right\|^2 \\ 
    &\stackrel{(\ref{PDG-SDG-manifold:DG bound})}{\leq} {\color{red} \left(\G(z) + \left(\sqrt{2\beta_x}\left(\|x\| + L_{f^*}\right) + \sqrt{2\beta_y}\|y\|\right)\sqrt{\G(z)}\right)^2 } + {\color{blue} \|Ax - b\|^2 + \left\|a + A^Ty\right\|^2} \\ 
    &\stackrel{(\ref{PDG-SDG:primal-dual feas bound})}{\leq} \left(\G(z) + \left(\sqrt{2\beta_x}\left(\|x\| + L_{f^*}\right) + \sqrt{2\beta_y}\|y\|\right)\sqrt{\G(z)}\right)^2 + {\color{blue} 2\beta_{\max} \G(z)} \smartqed 
\end{align*}
\end{proof}
Upon setting $f_2^* = 0$ in our preceding theorem, we managed to derive a slightly tighter bound. The detailed expression is presented below.
\begin{proposition} \label{prop:PDG-SDG}
    Given $\beta = (\beta_x, \beta_y) \in (0, +\infty)^2$, $z \in \Z$, and a function $f \in \Gamma_0(\X)$ such that its Fenchel-conjugate, $f^*$, is $L_{f^*}-$Lipschitz on its domain. Let $\beta_{\max} = \max(\beta_x, \beta_y)$. Then, for the \acrlong{pdg} and the \acrlong{sdg} defined, respectively, in (\ref{eqn:PDG}) and (\ref{eqn:SDG}) we have: 
    \begin{equation}\label{eqn:PDG-SDG}
    \begin{split}
        \D(z) &\leq \begin{multlined}\left(\G(z) + \left(\sqrt{2\beta_x}\left(\|x\| + L_{f^*}\right) + \sqrt{2\beta_y}\|y\|\right)\sqrt{\G(z)}\right)^2  + 2\beta_{\max} \G(z)\end{multlined} \\ 
        a &= \proj_{\dom f^*} \left(-A^Ty\right)  \hspace{1.85cm} p= \prox_{\beta_x^{-1}f}\left(x - \frac{1}{\beta_x} A^Ty\right) 
    \end{split}
    \end{equation}
\end{proposition}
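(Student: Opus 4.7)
The plan is to follow the same skeleton as the proof of Theorem \ref{thm:PDG-SDG -- manifold}, exploiting the fact that the hypotheses here correspond exactly to the special case $f_2^* = 0$, $f_1^* = f^*$, and hence the Lipschitz-gradient constant $L_g$ drops out. First I would decompose $\D(z)$ according to its definition (\ref{eqn:PDG}) into three squared pieces: the absolute gap $|f(x) + f^*(a) + \langle b, y\rangle|^2$, and the two feasibility residuals $\|a + A^Ty\|^2$ and $\|Ax - b\|^2$. Lemma \ref{lem:PDG-SDG:primal-dual feas bound} dispatches the sum of the last two as $\leq 2\beta_{\max}\G(z)$, so the entire difficulty lies in controlling the absolute gap.

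For the lower bound on $f(x) + f^*(a) + \langle b, y\rangle$, I would invoke Lemma \ref{lem:PDG-SDG:DG lower bound} directly, which already matches the target expression exactly. For the upper bound, I introduce the auxiliary subgradient vector $\Tilde{a} := -A^Ty + \beta_x(x - p) \in \partial f(p)$ from (\ref{eqn:a tilde}). The $L_{f^*}$-Lipschitz assumption gives $f^*(a) \leq f^*(\Tilde{a}) + L_{f^*}\|a - \Tilde{a}\|$. I then add and subtract $\langle x, \Tilde{a}\rangle$ so as to isolate the Fenchel quantity $f(x) + f^*(\Tilde{a}) - \langle x, \Tilde{a}\rangle$, which Lemma \ref{lem:PDG-SDG:Approx DG upper bound} bounds above by $\G(z)$. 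Regrouping the remaining inner products through the identity $\langle x, \Tilde{a}\rangle + \langle b, y\rangle = \langle x, a + A^Ty\rangle - \langle Ax - b, y\rangle - \langle x, a - \Tilde{a}\rangle$ leaves exactly three norm-bounded leftovers: $L_{f^*}\|a - \Tilde{a}\|$ from the Lipschitz slack, $\|x\|\|a + A^Ty\|$, and $\|y\|\|Ax - b\|$.

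Finally I would apply the four key auxiliary inequalities in sequence: (\ref{PDG-SDG:a, a tilde, p*}) gives $\|a - \Tilde{a}\| \leq \|p^*\|$, (\ref{PDG-SDG:proj -- norms}) gives $\|a + A^Ty\| \leq \|p^*\|$, (\ref{eqn:p*<G}) yields $\|p^*\| \leq \sqrt{2\beta_x \G(z)}$, and (\ref{eqn:FG-SDG}) yields $\|Ax - b\| \leq \sqrt{2\beta_y \G(z)}$. Collecting, the upper bound becomes $\G(z) + \bigl(\sqrt{2\beta_x}(\|x\| + L_{f^*}) + \sqrt{2\beta_y}\|y\|\bigr)\sqrt{\G(z)}$. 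Matched against Lemma \ref{lem:PDG-SDG:DG lower bound}, this same expression bounds $|f(x) + f^*(a) + \langle b, y\rangle|$. Squaring and adding the $2\beta_{\max}\G(z)$ feasibility bound from Lemma \ref{lem:PDG-SDG:primal-dual feas bound} then reproduces the target (\ref{eqn:PDG-SDG}).

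The main obstacle I anticipate is the bookkeeping in the rearrangement step: one must carefully align the cancellation of $\langle x, \Tilde{a}\rangle$ with the expansion of $\langle b, y\rangle$ so that exactly the three clean norm-leftovers survive, while the $\frac{\beta_x}{2}\|x - p\|^2$ and $\frac{1}{2\beta_y}\|Ax - b\|^2$ slack terms from Lemma \ref{lem:PDG-SDG:Approx DG upper bound} are discarded non-negatively. Because $f_2^*$ (and hence its associated $\langle p_2, a_2 - \Tilde{a}_2\rangle$ and $\frac{L_g}{2}\|a_2 - \Tilde{a}_2\|^2$ contributions) is absent, the $2\G(z) + \beta_x L_g \G(z)$ surplus that inflates the factor $3 + \beta_x L_g$ in Theorem \ref{thm:PDG-SDG -- manifold} never appears, leaving the coefficient of $\G(z)$ equal to $1$ in the squared term, as announced.
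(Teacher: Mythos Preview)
Your approach is the same as the paper's, but there is a small bookkeeping slip in the rearrangement step that, as written, would not deliver the sharp constant. Your identity
\[
\langle x, \Tilde{a}\rangle + \langle b, y\rangle = \langle x, a + A^Ty\rangle - \langle Ax - b, y\rangle - \langle x, a - \Tilde{a}\rangle
\]
has \emph{three} inner products on the right-hand side, so after Cauchy--Schwarz (together with the $L_{f^*}\|a-\Tilde a\|$ Lipschitz slack) you get \emph{four} leftovers, not three: you have omitted $\|x\|\|a-\Tilde a\|$. Bounding both $\|a+A^Ty\|$ and $\|a-\Tilde a\|$ by $\|p^*\|$ then produces a coefficient $2\|x\|$, i.e.\ the weaker Theorem~\ref{thm:PDG-SDG -- manifold} bound rather than the tighter statement of the proposition.

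The fix is exactly what the paper does: do not route through $a$ at all in this step. The two $x$-terms in your identity recombine as $\langle x, a+A^Ty\rangle - \langle x, a-\Tilde a\rangle = \langle x, \Tilde a + A^Ty\rangle$, so use directly
\[
\langle x,\Tilde a\rangle + \langle b,y\rangle = \langle x,\Tilde a + A^Ty\rangle - \langle Ax-b,y\rangle,
\]
apply Cauchy--Schwarz once to get $\|x\|\|\Tilde a+A^Ty\|=\|x\|\|p^*\|$, and then (\ref{eqn:p*<G}) yields the single $\|x\|\sqrt{2\beta_x\G(z)}$ term. With this correction your outline matches the paper's proof exactly.
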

\begin{proof}
The main contrast between the proof of Theorem \ref{thm:PDG-SDG -- manifold} and this one lies in the upper bound established for the term $f(x) + f^*(a) + \langle b, y \rangle$. However, the rest of the proof remains unchanged. Since $f^*$ is $L_{f^*}-$Lipschitz on its domain, then: 
\begin{equation} \label{PDG-SDG:f* Lipschitz}
    f^*(a) \leq L_{f^*} \|a - \Tilde{a}\| + f^*(\Tilde{a})
\end{equation}
Therefore, 
\begin{align*}
    f(x) + {\color{red} f^*(a)} + \langle b, y \rangle &\stackrel{(\ref{PDG-SDG:f* Lipschitz})}{\leq} f(x) {\color{red} +  f^*(\Tilde{a}) + L_{f^*} \|a - \Tilde{a} \| } +  \langle b, y \rangle \\
    &= {\color{blue} f(x) + f^*(\Tilde{a}) - \langle x, \Tilde{a} \rangle } - \left\langle Ax - b, y \right\rangle + \left\langle x, \Tilde{a} + A^Ty \right\rangle  + L_{f^*} \|a - \Tilde{a} \| \\
    &\stackrel{(\ref{PDG-SDG:Approx DG upper bound})}{\leq} {\color{blue} \G(z)} - \left\langle Ax - b, y \right\rangle + \left\langle x, \Tilde{a} + A^Ty \right\rangle + L_{f^*} \|a - \Tilde{a} \| \\ 
    &\leq \G(z) + \left\|Ax - b\right\| \|y\| + \|x\| {\color{red} \left\|\Tilde{a} + A^Ty\right\| } + L_{f^*} \|a - \Tilde{a}\| \\ 
    &\stackrel{(\ref{PDG-SDG:proj -- norms})}{\leq} \G(z) + \left\|Ax - b\right\| \|y\| + \|x\| {\color{red} \left\|p^*\right\|} + L_{f^*} \|a - \Tilde{a}\| \\ 
    &\leq  \G(z) + \|y\| \sqrt{2\beta_y \G(z)} + \|x\| \sqrt{2\beta_x \G(z)} + L_{f^*} \sqrt{2\beta_x \G(z)} \stepcounter{equation}\tag{\theequation}\label{PDG-SDG:DG upper bound}
\end{align*}
where in the last line we utilized the upper bounds (\ref{eqn:FG-SDG}, \ref{eqn:p*<G}, \ref{PDG-SDG:a, a tilde, p*}), respectively.
Now, from this upper bound (\ref{PDG-SDG:DG upper bound}) and the earlier-proven lower-bound (\ref{PDG-SDG:DG lower bound}), we get: 
\begin{equation} \label{PDG-SDG:DG bound}
    |f(x) + f^*(a) + \langle b, y\rangle| \leq \G(z) + \left(\sqrt{2\beta_x}\left(\|x\| + L_{f^*}\right) + \sqrt{2\beta_y}\|y\|\right)\sqrt{\G(z)}
\end{equation}
Therefore, Lemma \ref{lem:PDG-SDG:primal-dual feas bound} along with this last bound (\ref{PDG-SDG:DG bound}) conclude the proof: 
\begin{align*}
    \D(z) &=  {\color{red} |f(x) + f^*(a) + \langle b, y\rangle|^2} + {\color{blue} \|Ax - b\|^2 + \left\|a + A^Ty\right\|^2} \\ 
    &\leq {\color{red} \left( \left(3 + \beta_x L_g\right)\G(z) + \left(\sqrt{2\beta_x}\left(2\|x\| + L_{f_1^*}\right)+ \sqrt{2\beta_y}\|y\|\right)\sqrt{\G(z)} \right)^2 } + {\color{blue} 2\beta_{\max} \G(z)} \smartqed 
\end{align*}
\end{proof}

\begin{acknowledgements}
This work was supported by the Agence National de la Recherche grant ANR-20-CE40-0027, Optimal Primal-Dual Algorithms (APDO), and T\'el\'ecom Paris's research and teaching chair Data Science and Artificial Intelligence for Digitalized Industry and Services DSAIDIS.
\end{acknowledgements}

%
\paragraph{\emph{\textbf{Conflict of interest}}}

The authors declare that they have no conflict of interest.

\bibliographystyle{spmpsci}      
\bibliography{ref}   

\begin{thebibliography}{10}
\providecommand{\url}[1]{{#1}}
\providecommand{\urlprefix}{URL }
\expandafter\ifx\csname urlstyle\endcsname\relax
  \providecommand{\doi}[1]{DOI~\discretionary{}{}{}#1}\else
  \providecommand{\doi}{DOI~\discretionary{}{}{}\begingroup \urlstyle{rm}\Url}\fi

\bibitem{PDHG3}
Alacaoglu, A., Fercoq, O., Cevher, V.: On the convergence of stochastic primal-dual hybrid gradient.
\newblock SIAM Journal on Optimization \textbf{32}(2), 1288--1318 (2022).
\newblock \doi{10.1137/19M1296252}.
\newblock \urlprefix\url{https://doi.org/10.1137/19M1296252}

\bibitem{PDG}
Andersen, E.D., Andersen, K.D.: The Mosek Interior Point Optimizer for Linear Programming: An Implementation of the Homogeneous Algorithm, pp. 197--232.
\newblock Springer US, Boston, MA (2000).
\newblock \doi{10.1007/978-1-4757-3216-0_8}.
\newblock \urlprefix\url{https://doi.org/10.1007/978-1-4757-3216-0_8}

\bibitem{PDHG}
Applegate, D., D{\'\i}az, M., Hinder, O., Lu, H., Lubin, M., O'Donoghue, B., Schudy, W.: Practical large-scale linear programming using primal-dual hybrid gradient.
\newblock Advances in Neural Information Processing Systems \textbf{34}, 20243--20257 (2021)

\bibitem{LCLS1}
Arablouei, R., Doğançay, K.: Performance analysis of linear-equality-constrained least-squares estimation.
\newblock IEEE Transactions on Signal Processing \textbf{63}(14), 3762--3769 (2015).
\newblock \doi{10.1109/TSP.2015.2424199}

\bibitem{DO}
Bianchi, P., Hachem, W., Iutzeler, F.: A stochastic primal-dual algorithm for distributed asynchronous composite optimization.
\newblock In: 2014 IEEE Global Conference on Signal and Information Processing (GlobalSIP), pp. 732--736 (2014).
\newblock \doi{10.1109/GlobalSIP.2014.7032215}

\bibitem{KKT_Knitro}
Byrd, R.H., Nocedal, J., Waltz, R.A.: Knitro: An Integrated Package for Nonlinear Optimization, pp. 35--59.
\newblock Springer US, Boston, MA (2006).
\newblock \doi{10.1007/0-387-30065-1_4}.
\newblock \urlprefix\url{https://doi.org/10.1007/0-387-30065-1_4}

\bibitem{PDHG1}
Chambolle, A., Pock, T.: A first-order primal-dual algorithm for convex problems with applications to imaging.
\newblock Journal of Mathematical Imaging and Vision \textbf{40} (2011).
\newblock \doi{10.1007/s10851-010-0251-1}

\bibitem{Chen1994BasisP}
Chen, S., Donoho, D.L.: Basis pursuit.
\newblock Proceedings of 1994 28th Asilomar Conference on Signals, Systems and Computers \textbf{1}, 41--44 vol.1 (1994).
\newblock \urlprefix\url{https://api.semanticscholar.org/CorpusID:96447294}

\bibitem{soft-thresholding}
Combettes, P.L., Wajs, V.R.: Signal recovery by proximal forward-backward splitting.
\newblock Multiscale Modeling \& Simulation \textbf{4}(4), 1168--1200 (2005).
\newblock \doi{10.1137/050626090}.
\newblock \urlprefix\url{https://doi.org/10.1137/050626090}

\bibitem{PDHG2}
Condat, L.: A primal–dual splitting method for convex optimization involving lipschitzian, proximable and linear composite terms.
\newblock Journal of Optimization Theory and Applications \textbf{158} (2013).
\newblock \doi{10.1007/s10957-012-0245-9}

\bibitem{LP}
Dantzig, G.B.: Linear programming.
\newblock Operations Research \textbf{50}(1), 42--47 (2002).
\newblock \doi{10.1287/opre.50.1.42.17798}.
\newblock \urlprefix\url{https://doi.org/10.1287/opre.50.1.42.17798}

\bibitem{daubechies1992ten}
Daubechies, I.: Ten Lectures on Wavelets.
\newblock CBMS-NSF Regional Conference Series in Applied Mathematics. Society for Industrial and Applied Mathematics (1992).
\newblock \urlprefix\url{https://books.google.fr/books?id=B3C5aG4OboIC}

\bibitem{BP}
Donoho, D.: Compressed sensing.
\newblock Information Theory, IEEE Transactions on \textbf{52}, 1289 -- 1306 (2006).
\newblock \doi{10.1109/TIT.2006.871582}

\bibitem{AKKT_proximity}
Dutta, J., Deb, K., Tulshyan, R., Arora, R.: Approximate kkt points and a proximity measure for termination.
\newblock Journal of Global Optimization \textbf{56} (2013).
\newblock \doi{10.1007/s10898-012-9920-5}

\bibitem{QEB_fercoq}
Fercoq, O.: Quadratic error bound of the smoothed gap and the restarted averaged primal-dual hybrid gradient.
\newblock Open Journal of Mathematical Optimization \textbf{4}, 6 (2023).
\newblock \doi{10.5802/ojmo.26}.
\newblock \urlprefix\url{https://ojmo.centre-mersenne.org/articles/10.5802/ojmo.26/}

\bibitem{AKKT}
Haeser, G., Melo, V.: Convergence detection for optimization algorithms: Approximate-kkt stopping criterion when lagrange multipliers are not available.
\newblock Operations Research Letters \textbf{43}, 484--488 (2015).
\newblock \doi{10.1016/j.orl.2015.06.009}

\bibitem{MSR}
Latafat, P., Freris, N.M., Patrinos, P.: A new randomized block-coordinate primal-dual proximal algorithm for distributed optimization.
\newblock IEEE Transactions on Automatic Control \textbf{64}(10), 4050--4065 (2019).
\newblock \doi{10.1109/TAC.2019.2906924}

\bibitem{lu2023infimal}
Lu, H., Yang, J.: On the infimal sub-differential size of primal-dual hybrid gradient method and beyond (2023)

\bibitem{QP}
McCarl, B.A., Moskowitz, H., Furtan, H.: Quadratic programming applications.
\newblock Omega \textbf{5}(1), 43--55 (1977).
\newblock \doi{https://doi.org/10.1016/0305-0483(77)90020-2}.
\newblock \urlprefix\url{https://www.sciencedirect.com/science/article/pii/0305048377900202}

\bibitem{Nesterov2005SmoothMO}
Nesterov, Y.: Smooth minimization of non-smooth functions.
\newblock Mathematical Programming \textbf{103}, 127--152 (2005).
\newblock \urlprefix\url{https://api.semanticscholar.org/CorpusID:2391217}

\bibitem{variational_analysis}
R.~Tyrrell~Rockafellar, R.J.B.W.: Variational Analysis.
\newblock Springer Berlin, Heidelberg (26 June 2009).
\newblock \urlprefix\url{libgen.li/file.php?md5=b8624d493f5332a34a826b9cc74b88b1}

\bibitem{convex_analysis}
Rockafellar, R.: Convex Analysis.
\newblock Princeton Landmarks in Mathematics and Physics. Princeton University Press (1970).
\newblock \urlprefix\url{https://books.google.fr/books?id=1TiOka9bx3sC}

\bibitem{convex_opt}
Stephen~Boyd, L.V.: Convex Optimization.
\newblock Cambridge University Press (2004).
\newblock \urlprefix\url{libgen.li/file.php?md5=b8624d493f5332a34a826b9cc74b88b1}

\bibitem{Taha1998}
Taha, H.: Operations research: An introduction.
\newblock Journal of Manufacturing Systems \textbf{17}(1), 78 (1998)

\bibitem{LASSO}
Tibshirani, R.J.: The lasso problem and uniqueness.
\newblock Electronic Journal of Statistics  (2013)

\bibitem{SDG_intro}
Tran-Dinh, Q., Fercoq, O., Cevher, V.: A smooth primal-dual optimization framework for nonsmooth composite convex minimization.
\newblock SIAM Journal on Optimization \textbf{28}(1), 96--134 (2018).
\newblock \doi{10.1137/16M1093094}.
\newblock \urlprefix\url{https://doi.org/10.1137/16M1093094}

\bibitem{LCLS}
Van~Loan, C.: On the method of weighting for equality-constrained least-squares problems.
\newblock SIAM Journal on Numerical Analysis \textbf{22}(5), 851--864 (1985).
\newblock \doi{10.1137/0722051}.
\newblock \urlprefix\url{https://doi.org/10.1137/0722051}

\bibitem{LASSO1}
Zou, H.: The adaptive lasso and its oracle properties.
\newblock Journal of the American statistical association \textbf{101}(476), 1418--1429 (2006)

\end{thebibliography}




\end{document}